\documentclass[12pt,reqno]{amsart}
\usepackage{amsmath,amsthm,amsfonts,amssymb,times}
\usepackage{verbatim}
\usepackage{url}
\usepackage{xcolor}
\setlength{\textheight}{220mm} \setlength{\textwidth}{155mm}
\setlength{\oddsidemargin}{1.25mm}
\setlength{\evensidemargin}{1.25mm} \setlength{\topmargin}{0mm}


\setbox0=\hbox{$+$}
\newdimen\plusheight
\plusheight=\ht0
\def\+{\;\lower\plusheight\hbox{$+$}\;}

\setbox0=\hbox{$-$}
\newdimen\minusheight
\minusheight=\ht0
\def\-{\;\lower\minusheight\hbox{$-$}\;}

\setbox0=\hbox{$\cdots$}
\newdimen\cdotsheight
\cdotsheight=\plusheight
\def\cds{\lower\cdotsheight\hbox{$\cdots$}}

\renewcommand{\Re}{\operatorname{Re}}
\renewcommand{\Im}{\operatorname{Im}}

\makeatletter
\def\leqalignno#1{\displ@y \tabskip\z@ plus\@ne fil
  \halign to\displaywidth{\hfil$\@lign\displaystyle{##}$\tabskip\z@skip
    &$\@lign\displaystyle{{}##}$\hfil\tabskip\z@ plus\@ne fil
    &\kern-\displaywidth\rlap{$\@lign\hbox{\rm##}$}\tabskip\displaywidth\crcr
    #1\crcr}}
\makeatother

\newcommand{\eb}{\begin{equation}}
\newcommand{\ee}{\end{equation}}
\renewcommand{\Im}{\operatorname{Im}}
\newcommand{\df}{\dfrac}
\newcommand{\tf}{\tfrac}

\renewcommand{\Re}{\operatorname{Re}}
\renewcommand{\Im}{\operatorname{Im}}

 \renewcommand{\a}{\alpha}
\renewcommand{\b}{\beta}
\newcommand{\e}{\epsilon}
\renewcommand{\d}{{\delta}}

\renewcommand{\o}{{\omega}}

\newcommand{\Q}{\mathbb{Q}}

\renewcommand{\Re}{\text{Re}}
\renewcommand{\Im}{\text{Im}}

\renewcommand{\(}{\left\(}
\renewcommand{\)}{\right\)}
\renewcommand{\[}{\left\[}
\renewcommand{\]}{\right\]}

\renewcommand{\pmod}[1]{\,(\textup{mod}\,#1)}
\numberwithin{equation}{section}
 \theoremstyle{plain}

\newtheorem{theorem}{Theorem}[section]
\numberwithin{equation}{section}
\theoremstyle{plain}

\newtheorem{lemma}[theorem]{Lemma}
\newtheorem{corollary}[theorem]{Corollary}

\newtheorem{definition}[theorem]{Definition}

\allowdisplaybreaks

\begin{document}

\title[Finite Trigonometric Sums]
{Evaluations and  Relations for Finite Trigonometric Sums}
\author{Bruce C.~Berndt, Sun Kim, Alexandru Zaharescu}
\address{Department of Mathematics, University of Illinois, 1409 West Green
Street, Urbana, IL 61801, USA} \email{berndt@illinois.edu}
\address{Department of Mathematics, and Institute of Pure and Applied Mathematics, Jeonbuk National
University, 567 Baekje-daero, Jeonju-si, Jeol labuk-do 54896, Republic of Korea}
\email{sunkim@jbnu.ac.kr}
\address{Department of Mathematics, University of Illinois, 1409 West Green
Street, Urbana, IL 61801, USA; Institute of Mathematics of the Romanian
Academy, P.O.~Box 1-764, Bucharest RO-70700, Romania}
\email{zaharesc@illinois.edu}

\begin{abstract}
Several methods are used to evaluate  finite trigonometric sums.  In each case, either the sum had not previously been evaluated, or it had been evaluated, but only by analytic means, e.g., by complex analysis or modular transformation formulas.   We establish both reciprocity and three sum relations for trigonometric sums. Motivated by certain sums that we have evaluated, we add coprime conditions to the summands and thereby define analogues of Ramanujan sums, which we in turn evaluate.  One of these analogues leads to a criterion for the Riemann Hypothesis, analogous to the Franel--Landau criterion.
\end{abstract}

\subjclass[2020]{Primary  11L03;  Secondary  33B10}
\keywords{Finite trigonometric sums, Ramanujan sums, Dedekind sums}

\maketitle

\section{Introduction}
Evaluations of finite trigonometric sums arise in many contexts in mathematics and physics.  The paper \cite{BY} by the first author and B.~P.~Yeap contains a total of 89 references in which evaluations arise in many diverse areas within mathematics and physics.

  In our recent paper \cite{BKZ1}, we devised two methods to evaluate trigonometric sums.   First, we developed new approaches in contour integration, which  extended ideas that two of us made in \cite{bz}.     Second, we devised a new method for evaluating trigonometric sums.

  In the present paper, we continue our examinations of finite trigonometric sums that we made in \cite{BKZ1} and \cite{BKZ2}.  In particular, we further develop the aforementioned  approach in \cite{BKZ1}.   However, we also introduce new ideas, which we relate below.

  First, in Section \ref{section1}, we evaluate several finite sums for which only analytic proofs were previously available.

Second, recall that Ramanujan sums (defined in \eqref{Ramanujansum}) have a co-prime requirement on the indices of summation.  Our evaluations of several sine sums in Section \ref{section1}  motivated us to define certain analogues of Ramanujan sums in Section \ref{ramanujan}. More precisely, we take certain trigonometric sums from Section \ref{section1} and add a co-prime condition to the summands.  These sums have apparently not been studied in the literature.  Remarkably, we are able to find elegant evaluations of these `new' sums.

The Frenel--Landau criterion for the Riemann Hypothesis can be readily established using Ramanujan sums.  We use one of the aforementioned analogues of Ramanujan sums to establish a new criterion for the Riemann Hypothesis.  Our new criterion has two interesting and different features that are not found in the Frenel--Landau criterion.

Third, the coefficients $a_n$ of finite trigonometric sums evaluated in the literature are generally of three types: $a_n=1, a_n=(-1)^n$, and $a_n = \chi(n)$, where $\chi$ is a Dirichlet character.  In Theorem \ref{2-T3} in Section \ref{section2}, we provide a general theorem for which the coefficients $a_n$ are $\pm1$, but they do not fall under the umbrellas of the three aforementioned types.

As remarked above, in \cite{BKZ1} we introduced a new method for the evaluation of finite trigonometric sums.  Our  method  yielded evaluations and identities for which only analytic proofs were previously available.  This study continues in the present paper, but now with a concentration on reciprocity theorems and identities involving three trigonometric sums.
 In the transformation formulae of the Dedekind eta-function, and in many generalizations and/or analogues of the Dedekind eta function, sums frequently arise that have representations as finite cotangent sums.  Generally, they satisfy reciprocity theorems and, in some cases, three term relations, in the terminology of the classical literature.   (It seems more appropriate to instead call these identities `three sum' relations, and so we shall do so in the sequel.) These reciprocity theorems and three sum relations are normally derived from the transformation formulas themselves or by using the residue theorem.  In Section \ref{section3}, we provide different, and perhaps shorter, proofs of reciprocity theorems and three sum relations for several cotangent sums, In particular, we establish a reciprocity theorem for a modified Dedekind sum associated with Eisenstein series, which was  introduced by the first author in \cite{Berndt0, Berndt}.


\section{Trigonometric Sums Initially Arising from Ramanujan's Theory of Theta Functions}\label{section1}
In the section, we give elementary proofs for the evaluations (1.5)--(1.10)  in \cite{HRJ}, which were proved there using theta functions.

\begin{theorem}[Equations (1.5) and (1.6) in \cite{HRJ}]\label{thm1.2}
If $k$ is an odd positive integer $\geq 3$, then
\begin{equation}\label{(1.5)}
\sum_{j=1}^{\frac{k-1}{2}}(-1)^{j-1}\sin\Big(\frac{(2j-1)\pi}{2k}\Big)=\frac{(-1)^{\frac{k+1}{2}}}{2},
\end{equation}
and
\begin{equation}\label{(1.6)}
\sum_{j=1}^{\frac{k-1}{2}}(-1)^{j-1}\csc\Big(\frac{(2j-1)\pi}{2k}\Big)=\frac{k+(-1)^{\frac{k+1}{2}}}{2}.
\end{equation}
\end{theorem}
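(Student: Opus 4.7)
\textbf{Plan for \eqref{(1.5)}.} I would recognize the sum as the imaginary part of a finite geometric series. With $\theta = \pi/(2k)$, $z = e^{i\theta}$, and $n = (k-1)/2$,
$$\sum_{j=1}^{n}(-1)^{j-1}z^{2j-1} = \frac{z(1-(-z^{2})^{n})}{1+z^{2}}.$$
Since $2k\theta = \pi$, one has $e^{i(k-1)\theta} = ie^{-i\theta}$, so $(-z^{2})^{n} = i(-1)^{n}e^{-i\theta}$. Combined with $1+z^{2} = 2\cos\theta\cdot e^{i\theta}$, the right-hand side simplifies to $(1-i(-1)^{n}e^{-i\theta})/(2\cos\theta)$. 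Its imaginary part is $(-1)^{n+1}/2 = (-1)^{(k+1)/2}/2$, which is \eqref{(1.5)}.

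\textbf{Plan for \eqref{(1.6)}.} The key tool here is the classical identity
$$\frac{\sin(k\alpha)}{\sin\alpha} = 1 + 2\sum_{m=1}^{(k-1)/2}\cos(2m\alpha),$$
valid for odd $k$ and obtained by summing the geometric series $\sum_{m=-(k-1)/2}^{(k-1)/2}e^{2im\alpha}$. Specializing to $\alpha_{j} = (2j-1)\pi/(2k)$, where $\sin(k\alpha_{j}) = (-1)^{j-1}$, this gives
$$\frac{(-1)^{j-1}}{\sin\alpha_{j}} = 1 + 2\sum_{m=1}^{(k-1)/2}\cos(2m\alpha_{j}).$$
I then sum over all $j = 1, \ldots, k$ and swap the order of summation. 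Each inner sum $\sum_{j=1}^{k}\cos(2m\alpha_{j})$ is the real part of a geometric sum with ratio $e^{2im\pi/k}\neq 1$, so it vanishes for $1\leq m\leq (k-1)/2$. Hence $\sum_{j=1}^{k}(-1)^{j-1}/\sin\alpha_{j} = k$.

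To recover the desired partial sum $S$ from this full sum, I use the symmetry $\alpha_{k+1-j} = \pi-\alpha_{j}$. This gives $\sin\alpha_{k+1-j} = \sin\alpha_{j}$, and since $k$ is odd, $(-1)^{(k+1-j)-1} = (-1)^{j-1}$. So the tail $j \geq (k+3)/2$ reproduces $S$, while the single middle term $j = (k+1)/2$ contributes $(-1)^{(k-1)/2}\csc(\pi/2) = -(-1)^{(k+1)/2}$. The identity $k = 2S - (-1)^{(k+1)/2}$ then yields \eqref{(1.6)}.

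\textbf{Main obstacle.} Identity \eqref{(1.5)} is essentially mechanical once one writes down the geometric series. For \eqref{(1.6)} the substantive step is spotting the expansion of $1/\sin\alpha_{j}$ as a cosine polynomial in $\alpha_{j}$---equivalently, recognizing the value of the Chebyshev polynomial $U_{k-1}$ at the zeros of $T_{k}$. This turns a cosecant sum into an orthogonality sum over $k$th roots of unity, after which only routine sign bookkeeping remains.
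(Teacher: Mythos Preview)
Your proofs are correct. For \eqref{(1.5)} you and the paper both evaluate a finite geometric series; the only cosmetic difference is that the paper computes the full sum over $j=1,\dots,k$ (which is $0$) and subtracts the middle term, while you sum the half-range directly.

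For \eqref{(1.6)} both arguments reduce first to showing that the full sum $\sum_{j=1}^{k}(-1)^{j-1}\csc\alpha_j$ equals $k$, then extract the half-sum by the symmetry $\alpha_{k+1-j}=\pi-\alpha_j$. The difference is in how the full sum is evaluated. The paper writes $\csc\alpha_j = 2i\,e^{i\alpha_j}/(e^{2i\alpha_j}-1)$, expands the denominator as a finite geometric series using $e^{2ik\alpha_j}=-1$, swaps the order, and invokes orthogonality of $k$th roots of unity. You instead feed $\alpha_j$ into the Dirichlet-kernel identity $\sin(k\alpha)/\sin\alpha = 1+2\sum_{m}\cos(2m\alpha)$ and use $\sin(k\alpha_j)=(-1)^{j-1}$ to read off $(-1)^{j-1}\csc\alpha_j$ directly as a cosine polynomial, then apply the same orthogonality. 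The two computations are equivalent at the level of exponentials, but your entry point is a bit cleaner: it stays real throughout and makes the vanishing of the cross terms transparent without any complex bookkeeping.
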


\begin{proof} Note that
\begin{equation*}
\sum_{j=1}^{\frac{k-1}{2}}(-1)^{j-1}\sin\Big(\frac{(2j-1)\pi}{2k}\Big)
=\frac12\sum_{j=1}^{k}(-1)^{j-1}\sin\Big(\frac{(2j-1)\pi}{2k}\Big)-\frac12(-1)^{\frac{k-1}{2}}.
\end{equation*}
With
\begin{align*}
\sum_{j=1}^{k}(-1)^{j-1}\sin\Big(\frac{(2j-1)\pi}{2k}\Big)&=\Im\left\{\sum_{j=1}^{k}(-1)^{j-1}e^{\frac{(2j-1)\pi i}{2k}}\right\} \\
&=\Im\left\{\frac{e^{\frac{\pi i}{2k}}\big(1-(-e^{\frac{\pi i}{k}})^k \big)}{1+e^{\frac{\pi i}{k}}}\right\}=0,
\end{align*}
we obtain \eqref{(1.5)}.

Similarly, we have
\begin{equation*}
\sum_{j=1}^{\frac{k-1}{2}}(-1)^{j-1}\csc\Big(\frac{(2j-1)\pi}{2k}\Big)
=\frac12\sum_{j=1}^{k}(-1)^{j-1}\csc\Big(\frac{(2j-1)\pi}{2k}\Big)-\frac12(-1)^{\frac{k-1}{2}}.
\end{equation*}
Now, observe that
\begin{align}\label{1-1}
&\sum_{j=1}^{k}(-1)^{j-1}\csc\Big(\frac{(2j-1)\pi}{2k}\Big)=
2i\sum_{j=1}^{k}(-1)^{j-1}\frac{e^{\frac{(2j-1)\pi i}{2k}}}{e^{\frac{(2j-1)\pi i}{k}}-1} \notag\\
&=-i\sum_{j=1}^{k}(-1)^{j-1}e^{\frac{(2j-1)\pi i}{2k}}\sum_{t=0}^{k-1}e^{\frac{(2j-1)\pi i}{k}t} \notag\\
&=-i\sum_{t=0}^{k-1}e^{-\frac{(2t+1)\pi i}{2k}}\sum_{j=1}^{k}(-1)^{j-1}e^{\frac{(2t+1)j\pi i}{k}}.
\end{align}
Using the identity 
\begin{equation*}
\sum_{j=1}^{k}(-1)^{j-1}e^{\frac{(2t+1)j\pi i}{k}}=
\begin{cases}
-k, & \text{if} ~ t=\frac{k-1}{2}, \\
0, & \text{otherwise},
\end{cases}
\end{equation*}
 in \eqref{1-1} we obtain
\begin{equation*}
\sum_{j=1}^{k}(-1)^{j-1}\csc\Big(\frac{(2j-1)\pi}{2k}\Big)=-i(-i)(-k)=k.
\end{equation*}
This completes the proof of \eqref{(1.6)}.
\end{proof}

The following identities were also proved in \cite{BKZ2} via contour integration.

\begin{theorem}[Equations (1.7)--(1.10) in \cite{HRJ}]\label{HRJ}
 $$   $$
\begin{enumerate}
\item Let $n$ be  an even positive number, and let $ j \equiv 2\pmod4$, where $(\tf{j}{2}, \tf{n}{2})=1$.  Then
\begin{equation}\label{(1.9)}
\sum_{k=1}^{\tf{n}{2}-1} \df{\sin\left(\dfrac{ (j-1)k\pi}{n}\right)\sin\left(\dfrac{ (j+1)k\pi}{n}\right)}
{\sin^2\left(\dfrac{ k\pi}{n}\right)\sin^2\left(\dfrac{jk\pi}{n}\right)}=\df{n^2-4}{12}.
\end{equation}

\item Let $n$ denote an even positive integer, and let $ j \equiv 2\pmod4$, where $(\tf{j}{2}, \tf{n}{2})=1$.  Then
\begin{equation}\label{(1.8)}
\sum_{k=0}^{\tf{n}{2}-1} \df{\sin\left(\dfrac{ (j-1)(2k+1)\pi}{2n}\right)\sin\left(\dfrac{ (j+1)(2k+1)\pi}{2n}\right)}
{\sin^2\left(\dfrac{ (2k+1)\pi}{2n}\right)\sin^2\left(\dfrac{j(2k+1)\pi}{2n}\right)}=\df{n^2}{4}.
\end{equation}

\item  Let $n$ be  an odd positive number, and let $ j$ be an even positive integer with  $(j,n)=1$.  Then
\begin{equation}\label{(1.7)}
\sum_{k=0}^{\tf{n-3}{2}} \df{\sin\left(\dfrac{ (j-1)(2k+1)\pi}{2n}\right)\sin\left(\dfrac{ (j+1)(2k+1)\pi}{2n}\right)}
{\sin^2\left(\dfrac{ (2k+1)\pi}{2n}\right)\sin^2\left(\dfrac{j(2k+1)\pi}{2n}\right)}=\df{n^2-1}{3}.
\end{equation}

\item Let $n$ be an odd positive integer, and let $j$ be a positive integer such that $(j,n)=1.$  Then
\begin{equation}\label{(1.10)}
\sum_{k=0}^{\tf{n-3}{2}} \df{\sin\left(\dfrac{ (j-1)(2k+1)\pi}{n}\right)\sin\left(\dfrac{ (j+1)(2k+1)\pi}{n}\right)}
{\sin^2\left(\dfrac{ (2k+1)\pi}{n}\right)\sin^2\left(\dfrac{j(2k+1)\pi}{n}\right)}=0.
\end{equation}
\end{enumerate}
\end{theorem}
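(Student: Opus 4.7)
My approach hinges on the elementary product-to-sum identity
\begin{equation*}
\sin\big((j-1)\theta\big)\sin\big((j+1)\theta\big) = \sin^2(j\theta) - \sin^2\theta,
\end{equation*}
applied with $\theta = k\pi/n$ in part (1) and $\theta = (2k+1)\pi/(2n)$ in parts (2), (3), (4). Dividing by $\sin^2\theta\sin^2(j\theta)$, each summand collapses to $\csc^2\theta - \csc^2(j\theta)$, so each of the four identities reduces to evaluating
\begin{equation*}
\sum_k \csc^2\theta_k \;-\; \sum_k \csc^2(j\theta_k).
\end{equation*}

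For the first sum, I would invoke the classical evaluations $\sum_{k=1}^{m-1}\csc^2(k\pi/m) = (m^2-1)/3$ and $\sum_{k=0}^{m-1}\csc^2((2k+1)\pi/(2m)) = m^2$, together with the symmetry $\csc^2\theta = \csc^2(\pi-\theta)$, to handle the half-range of the summation index. This gives $(n^2-4)/6$ in part (1), $n^2/2$ in part (2), $(n^2-1)/2$ in part (3), and (as will be verified below) $(n^2-1)/6$ in part (4).

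For the second sum, I use the period $\pi$ of $\sin^2$ to reduce the arguments $j\theta_k$ modulo the relevant integer $n$ or $2n$, then exploit the coprimality of $j$ (or of $j/2$, when $j$ is even) with that modulus. In part (1), since $(j/2,n/2)=1$, multiplication by $j/2$ permutes $\{1,\ldots,n/2-1\}$, so the second sum equals $\sum_{k=1}^{n/2-1}\csc^2(k\pi/(n/2)) = (n^2-4)/12$. In part (2), multiplication by the odd integer $j/2$ preserves and permutes the set of odd residues modulo the even integer $n$ (odd times odd remains odd modulo an even integer), producing a matching sum that evaluates to $n^2/4$ after splitting the classical $\csc^2$-sum by parity.

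The main obstacle will be parts (3) and (4), where the indexing set of odd residues $\{1,3,\ldots,n-2\}$ modulo an odd $n$ is not preserved by multiplication by $j$. The key observation is that the nonzero residues mod $n$ split into pairs $\{r,n-r\}$ of equal $\csc^2$-value, and since $n$ is odd each pair contains exactly one odd and one even element. Because multiplication by $j$ commutes with negation modulo $n$, the image of the odd residues under multiplication by $j$ hits exactly one element of each pair, so the sum of $\csc^2$ over this image always equals $\tfrac12\sum_{r=1}^{n-1}\csc^2(r\pi/n) = (n^2-1)/6$, independently of the particular $j$. Substituting, part (3) evaluates to $(n^2-1)/2 - (n^2-1)/6 = (n^2-1)/3$, while part (4) collapses to $0$ because both $\csc^2$-sums equal $(n^2-1)/6$.
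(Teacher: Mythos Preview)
Your proof is correct and follows essentially the same route as the paper. Both arguments use the identity $\sin((j-1)\theta)\sin((j+1)\theta)=\sin^2(j\theta)-\sin^2\theta$ to reduce each sum to a difference of two $\csc^2$-sums, evaluate the first via the classical formulas $\sum_{k=1}^{m-1}\csc^2(k\pi/m)=(m^2-1)/3$ and its odd-index variant, and handle the second by a coprimality/permutation argument on the residues; your pairing justification for parts (3) and (4) is exactly the content the paper leaves implicit when it writes the equality $\sum_k\csc^2(j(2k+1)\pi/(2n))=\sum_k\csc^2((2k+1)\pi/n)$ without comment.
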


\begin{proof}[Proof of \eqref{(1.9)}] Using the product formula for sine, we find that
\begin{align}\label{1.9-1}
&\sum_{k=1}^{\tf{n}{2}-1} \frac{\sin\left(\frac{(j-1)k\pi}{n}\right)\sin\left(\frac{ (j+1)k\pi}{n}\right)}
{\sin^2\left(\frac{ k\pi}{n}\right)\sin^2\left(\frac{jk\pi}{n}\right)}=
\frac12\sum_{k=1}^{\tf{n}{2}-1} \frac{\cos\left(\frac{2k\pi}{n}\right)-\cos\left(\frac{2jk\pi}{n}\right)}
{\sin^2\left(\frac{ k\pi}{n}\right)\sin^2\left(\frac{jk\pi}{n}\right)} \notag\\
&=\sum_{k=1}^{\tf{n}{2}-1} \frac{\sin^2\left(\frac{jk\pi}{n}\right)-\sin^2\left(\frac{k\pi}{n}\right)}
{\sin^2\left(\frac{ k\pi}{n}\right)\sin^2\left(\frac{jk\pi}{n}\right)}
=\sum_{k=1}^{\tf{n}{2}-1}\frac{1}{\sin^2\left(\frac{ k\pi}{n}\right)}-
\sum_{k=1}^{\tf{n}{2}-1}\frac{1}{\sin^2\left(\frac{jk\pi}{n}\right)}.
\end{align}
Now, recall \cite[Corollary 8.6]{BKZ1}
\begin{align}\label{C8.6}
\sum_{k=1}^{n-1}\frac{1}{\sin^2\left(\frac{k\pi}{n}\right)}=\frac{n^2-1}{3}.
\end{align}
From \eqref{C8.6}, we can easily deduce that
\begin{align}\label{1.9-2}
\sum_{k=1}^{\tf{n}{2}-1}\frac{1}{\sin^2\left(\frac{k\pi}{n}\right)}=\frac12\left(\frac{n^2-1}{3}-1\right)
=\frac{n^2-4}{6}.
\end{align}
If we set $j'=j/2$ and $n'=n/2,$ then since $(j',n')=1,$ we have
\begin{align}\label{1.9-3}
\sum_{k=1}^{\tf{n}{2}-1}\frac{1}{\sin^2\left(\frac{jk\pi}{n}\right)}=
\sum_{k=1}^{n'-1}\frac{1}{\sin^2\left(\frac{j'k\pi}{n'}\right)}=
\sum_{k=1}^{n'-1}\frac{1}{\sin^2\left(\frac{k\pi}{n'}\right)}=\frac{(n')^2-1}{3}=\frac{n^2-4}{12}.
\end{align}
Putting \eqref{1.9-2} and \eqref{1.9-3} into \eqref{1.9-1}, we complete the proof of \eqref{(1.9)}.

{\it Proof of \eqref{(1.8)}.} Similarly, we have
\begin{align}\label{1.8-1}
&\sum_{k=0}^{\tf{n}{2}-1} \df{\sin\left(\frac{ (j-1)(2k+1)\pi}{2n}\right)\sin\left(\frac{ (j+1)(2k+1)\pi}{2n}\right)}
{\sin^2\left(\frac{ (2k+1)\pi}{2n}\right)\sin^2\left(\frac{j(2k+1)\pi}{2n}\right)}
=\frac12\sum_{k=0}^{\tf{n}{2}-1} \df{\cos\left(\frac{(2k+1)\pi}{n}\right)-\cos\left(\frac{j(2k+1)\pi}{n}\right)}
{\sin^2\left(\frac{ (2k+1)\pi}{2n}\right)\sin^2\left(\frac{j(2k+1)\pi}{2n}\right)} \notag\\
&=\sum_{k=0}^{\tf{n}{2}-1} \df{\sin^2\left(\frac{j(2k+1)\pi}{2n}\right)-\sin^2\left(\frac{(2k+1)\pi}{2n}\right)}
{\sin^2\left(\frac{ (2k+1)\pi}{2n}\right)\sin^2\left(\frac{j(2k+1)\pi}{2n}\right)}
=\sum_{k=0}^{\tf{n}{2}-1}\frac{1}{\sin^2\left(\frac{(2k+1)\pi}{2n}\right)}-
\sum_{k=0}^{\tf{n}{2}-1}\frac{1}{\sin^2\left(\frac{j(2k+1)\pi}{2n}\right)}.
\end{align}
Applying \eqref{C8.6}, we obtain
\begin{align}\label{1.8-2}
&\sum_{k=0}^{\tf{n}{2}-1}\frac{1}{\sin^2\left(\frac{(2k+1)\pi}{2n}\right)}
=\sum_{k=1}^{n-1}\frac{1}{\sin^2\left(\frac{k\pi}{2n}\right)}
-\sum_{k=1}^{\tf{n}{2}-1}\frac{1}{\sin^2\left(\frac{k\pi}{n}\right)} \notag\\
&=\frac12\left(\frac{4n^2-1}{3}-1\right)-\frac12\left(\frac{n^2-1}{3}-1\right)=\frac{n^2}{2},
\end{align}
and
\begin{align}\label{1.8-3}
&\sum_{k=0}^{\tf{n}{2}-1}\frac{1}{\sin^2\left(\frac{j(2k+1)\pi}{2n}\right)}
=\sum_{k=0}^{\tf{n}{2}-1}\frac{1}{\sin^2\left(\frac{(2k+1)\pi}{n}\right)} \notag\\
&=\sum_{k=1}^{n-1}\frac{1}{\sin^2\left(\frac{k\pi}{n}\right)}-
\sum_{k=1}^{n'-1}\frac{1}{\sin^2\left(\frac{k\pi}{n'}\right)}
=\frac{n^2-1}{3}-\frac{(n')^2-1}{3}=\frac{n^2}{4}.
\end{align}
Hence, by  \eqref{1.8-1}, \eqref{1.8-2} and \eqref{1.8-3}, we finish the proof of \eqref{(1.8)}.

{\it Proof of \eqref{(1.7)}.} From \eqref{1.8-1}, it follows that
\begin{align}\label{1.7-1}
\sum_{k=0}^{\tf{n-3}{2}} \df{\sin\left(\frac{ (j-1)(2k+1)\pi}{2n}\right)\sin\left(\frac{ (j+1)(2k+1)\pi}{2n}\right)}
{\sin^2\left(\frac{(2k+1)\pi}{2n}\right)\sin^2\left(\frac{j(2k+1)\pi}{2n}\right)}
=\sum_{k=0}^{\tf{n-3}{2}}\frac{1}{\sin^2\left(\frac{(2k+1)\pi}{2n}\right)}-
\sum_{k=0}^{\tf{n-3}{2}}\frac{1}{\sin^2\left(\frac{j(2k+1)\pi}{2n}\right)}.
\end{align}
First, by \eqref{C8.6}, we have
\begin{align}\label{1.7-2}
&\sum_{k=0}^{\tf{n-3}{2}}\frac{1}{\sin^2\left(\frac{(2k+1)\pi}{2n}\right)}
=\sum_{k=1}^{n-1}\frac{1}{\sin^2\left(\frac{k\pi}{2n}\right)}
-\sum_{k=1}^{\tf{n-1}{2}}\frac{1}{\sin^2\left(\frac{k\pi}{n}\right)} \notag\\
&\qquad =\frac12\left(\frac{4n^2-1}{3}-1\right)-\frac12\cdot\frac{n^2-1}{3}=\frac{n^2-1}{2}.
\end{align}
Also,
\begin{align}\label{1.7-3}
\sum_{k=0}^{\tf{n-3}{2}}\frac{1}{\sin^2\left(\frac{j(2k+1)\pi}{2n}\right)}
=\sum_{k=0}^{\tf{n-3}{2}}\frac{1}{\sin^2\left(\frac{(2k+1)\pi}{n}\right)}
=\frac12\sum_{k=1}^{n-1}\frac{1}{\sin^2\left(\frac{k\pi}{n}\right)}=\frac{n^2-1}{6}.
\end{align}
Putting \eqref{1.7-2} and \eqref{1.7-3} into \eqref{1.7-1} yields \eqref{(1.7)}.

{\it Proof of \eqref{(1.10)}.} Analogously, we find that
\begin{align*}
&\sum_{k=0}^{\tf{n-3}{2}} \df{\sin\left(\frac{ (j-1)(2k+1)\pi}{n}\right)\sin\left(\frac{ (j+1)(2k+1)\pi}{n}\right)}
{\sin^2\left(\frac{ (2k+1)\pi}{n}\right)\sin^2\left(\frac{j(2k+1)\pi}{n}\right)}
=\sum_{k=0}^{\tf{n-3}{2}}\frac{1}{\sin^2\left(\frac{(2k+1)\pi}{n}\right)}-
\sum_{k=0}^{\tf{n-3}{2}}\frac{1}{\sin^2\left(\frac{j(2k+1)\pi}{n}\right)} \\
&\qquad =\sum_{k=0}^{\tf{n-3}{2}}\frac{1}{\sin^2\left(\frac{(2k+1)\pi}{n}\right)}-
\sum_{k=0}^{\tf{n-3}{2}}\frac{1}{\sin^2\left(\frac{(2k+1)\pi}{n}\right)}=0.
\end{align*}
which completes the proof of \eqref{(1.10)}.
\end{proof}

\section{Analogues of Ramanujan Sums}\label{ramanujan}
In his famous paper \cite{Ram1918}, \cite[p.~179]{cp}, Ramanujan began by defining the sums
\begin{equation}  \label{Ramanujansum}
c_q(n):= \sum_{\substack{k=1 \\ (k,q)=1}}^q e^{2\pi i\,kn/q},
\end{equation}
which are now known as \emph{Ramanujan Sums}, and   he derived several properties of them.  Excellent surveys on Ramanujan sums have been written by M.~Ram Murty \cite{murty} and M.~ T.~ Rassias and L\'aszl\'o T\'oth \cite{rassias}.

The evaluations in the proceeding sections, in particular, \eqref{(1.5)} and \eqref{(1.6)}, motivate us to define sums with the same summands, but now with a co-prime requirement on the indices of the summands, i.e., analogues of Ramanujan sums \eqref{Ramanujansum}.  Appearing in our proofs is the M\"{o}bius function $\mu(n)$, and, in particular, we need the property \cite[p.~111]{nz}
\begin{equation}\label{mu0}
\sum_{\substack{d|n}}\mu(d)=\begin{cases}1,\quad &\text{if } n=1,\\0,  &\text{if } n>1. \end{cases}
\end{equation}

\begin{theorem}\label{mu1}
If $k$ is an odd positive integer with $k\geq3$, then
\begin{align}\label{mu2}
\tilde{S}(k):=\sum_{\substack{1\leq \ell<k\\ \ell \textup{ odd}\\(\ell,k)=1}}(-1)^{(\ell-1)/2}\sin\left(\df{\pi\ell}{2k}\right)=\df{(-1)^{(k-1)/2}}{2}\mu(k).
\end{align}
\end{theorem}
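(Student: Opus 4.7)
The plan is to strip the coprime condition $(\ell,k)=1$ by Möbius inversion and then apply \eqref{(1.5)} to each of the resulting inner sums.

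First, I would use \eqref{mu0} in the form $[\gcd(\ell,k)=1]=\sum_{d\mid \gcd(\ell,k)}\mu(d)$ and interchange the order of summation to obtain
$$\tilde{S}(k)=\sum_{d\mid k}\mu(d)\sum_{\substack{1\le \ell<k\\ \ell\text{ odd}\\ d\mid \ell}}(-1)^{(\ell-1)/2}\sin\left(\frac{\pi \ell}{2k}\right).$$
Because $k$ is odd, every divisor $d$ of $k$ is odd; writing $\ell=dm$ forces $m$ odd and $1\le m<e$, where $e:=k/d$. A brief parity check (write $d=2a+1$, $m=2b+1$) gives the useful identity
$$(-1)^{(dm-1)/2}=(-1)^{(d-1)/2}(-1)^{(m-1)/2}.$$

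Second, after pulling $(-1)^{(d-1)/2}$ out of the inner sum, what remains is exactly the sum of \eqref{(1.5)} with $k$ replaced by $e$, upon the substitution $m=2j-1$. Since $k\ge 3$ is odd, whenever $d\mid k$ with $d<k$ the quotient $e=k/d$ is an odd integer $\ge 3$, so \eqref{(1.5)} applies and produces $(-1)^{(e+1)/2}/2$; when $d=k$ the inner sum is empty and equals $0$. Introducing the completely multiplicative function $\chi(n):=(-1)^{(n-1)/2}$ on odd integers (the same parity check proves $\chi(dm)=\chi(d)\chi(m)$), one has $(-1)^{(e+1)/2}=-\chi(e)$, and so
$$\tilde{S}(k)=-\frac{1}{2}\sum_{\substack{d\mid k\\ d<k}}\mu(d)\chi(d)\chi(k/d)=-\frac{\chi(k)}{2}\sum_{\substack{d\mid k\\ d<k}}\mu(d),$$
where the telescoping $\chi(d)\chi(k/d)=\chi(k)$ uses complete multiplicativity.

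Third, I would invoke \eqref{mu0} once more: since $k\ge 3$, $\sum_{d\mid k}\mu(d)=0$, hence $\sum_{d\mid k,\,d<k}\mu(d)=-\mu(k)$. Substituting gives $\tilde{S}(k)=\chi(k)\mu(k)/2=(-1)^{(k-1)/2}\mu(k)/2$, which is \eqref{mu2}.

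The only conceptual, non-bookkeeping step is the recognition that $\chi(n)=(-1)^{(n-1)/2}$ is completely multiplicative on odd integers (i.e., it is the non-principal Dirichlet character modulo $4$); once that is in hand, the sum over divisors collapses in a single line. I expect no real obstacle beyond taking care that $e=k/d\ge 3$ whenever $d<k$, so that \eqref{(1.5)} is legitimately applicable.
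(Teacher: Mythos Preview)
Your proof is correct and follows essentially the same route as the paper: M\"obius inversion, the substitution $\ell=dm$ with the parity identity $(-1)^{(dm-1)/2}=(-1)^{(d-1)/2}(-1)^{(m-1)/2}$, application of \eqref{(1.5)} to the inner sum, and then \eqref{mu0} to finish. The only cosmetic difference is that you collapse $(-1)^{(d-1)/2}(-1)^{(k/d+1)/2}$ via the complete multiplicativity of the nonprincipal character $\chi$ modulo~$4$, whereas the paper phrases the same step through the congruence $A+B\equiv AB+1\pmod 4$ for odd $A,B$; these are equivalent.
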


\begin{proof} Observe that \eqref{(1.5)} can be written in the form
\begin{equation}\label{mu3}
S(k)=\sum_{\substack{1\leq \ell <k\\ \ell \textup{ odd}}}(-1)^{(\ell-1)/2}\sin\left(\df{\pi \ell}{2k}\right)=\df{ (-1)^{(k+1)/2}}{2}.
\end{equation}

Using \eqref{mu0}, we can write $\tilde{S}(k)$ as
\begin{align}\label{mu4}
\tilde{S}(k)=&\sum_{\substack{1\leq \ell<k\\ \ell \textup{ odd}}}(-1)^{(\ell-1)/2}\sin\left(\df{\pi\ell}{2k}\right)\sum_{\substack{d|\ell\\d|k}}\mu(d)
=&\sum_{\substack{d|k\\d<k}}\mu(d)\sum_{\substack{1\leq \ell<k\\ \ell \textup{ odd}\\d|\ell}}(-1)^{(\ell-1)/2}\sin\left(\df{\pi\ell}{2k}\right).
\end{align}
Now set
\begin{align}\label{mu4.5}
k= dq,\quad
\ell= db.
\end{align}
Thus, the inner sum on the right-hand side of \eqref{mu4} takes the shape
\begin{align}\label{mu5}
\sum_{\substack{1\leq \ell<k\\ \ell \textup{ odd}\\d|\ell}}(-1)^{(\ell-1)/2}\sin\left(\df{\pi\ell}{2k}\right)
=&\sum_{\substack{1\leq b<q\\ b \textup{ odd}}}(-1)^{(db-1)/2}\sin\left(\df{\pi b}{2q}\right)\notag\\
=&(-1)^{(d-1)/2}\sum_{\substack{1\leq b<q\\ b \textup{ odd}}}(-1)^{(b-1)/2}\sin\left(\df{\pi b}{2q}\right),
\end{align}
since $b$ and $d$ are both odd.  Putting \eqref{mu5} in \eqref{mu4}, recalling that $q=k/d$, and employing \eqref{mu3}, we conclude that
\begin{align}\label{mu6}
\tilde{S}(k)=&\sum_{\substack{d|k\\d<k}}(-1)^{(d-1)/2}\mu(d)\sum_{\substack{1\leq b<q\\ b \textup{ odd}}}(-1)^{(b-1)/2}\sin\left(\df{\pi b}{2q}\right)\notag\\
=&\sum_{\substack{d|k\\d<k}}(-1)^{(d-1)/2}\mu(d)\cdot\df{ (-1)^{(k/d+1)/2}}{2}\notag\\
=&\df12\sum_{\substack{d|k\\d<k}}(-1)^{(d+k/d)/2}\mu(d).
\end{align}
Now if $A$ and $B$ are odd integers, it is easily checked that $A+B\equiv AB+1\pmod 4$.  Thus,
$$ d+\df{k}{d}\equiv k+1\pmod4 \,\, \Rightarrow  \df12\left(d+\df{k}{d}\right)\equiv \df{k+1}{2}\pmod2\,\, \Rightarrow (-1)^{(d+k/d)/2}=(-1)^{(k+1)/2}.$$
Thus, by \eqref{mu6} and \eqref{mu0},
\begin{equation*}
\tilde{S}(k)=\df{(-1)^{(k+1)/2}}{2}\sum_{\substack{d|k\\d<k}}\mu(d)=\df{(-1)^{(k+1)/2}}{2}\left\{\sum_{d|k}\mu(d)-\mu(k)\right\}=\df{(-1)^{(k-1)/2}}{2}\mu(k),
\end{equation*}
which completes the proof of \eqref{mu2}.
\end{proof}

When $k$ is a prime number, it is easily seen that \eqref{(1.5)} and \eqref{mu3} are in agreement.

\begin{theorem}\label{mu7}
Let $k$ be an odd positive integer with $k\geq3$.  Then
\begin{align}\label{mu8}
\tilde{T}(k):=\sum_{\substack{1\leq \ell<k\\ \ell \textup{ odd}\\(\ell,k)=1}}(-1)^{(\ell-1)/2}\csc\left(\df{\pi\ell}{2k}\right)=\df{k}{2}
\prod_{\substack{p|k\\p\equiv 1 \pmod 4}}\left(1-\df{1}{p}\right)\prod_{\substack{p|k\\p\equiv 3 \pmod 4}}\left(1+\df{1}{p}\right),
\end{align}
where the products are taken over all primes $p\equiv 1\pmod 4$ and $p\equiv 3 \pmod 4$, respectively, with $p$ dividing $k$ in each product.
\end{theorem}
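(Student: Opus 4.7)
The plan is to mirror closely the Möbius-inversion argument used in the proof of Theorem~\ref{mu1}, but now starting from \eqref{(1.6)} rather than \eqref{mu3}, and then to recognize the resulting divisor sum as an Euler product.

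First, using \eqref{mu0} to detect the coprimality condition, I would write
\begin{equation*}
\tilde{T}(k)=\sum_{\substack{1\leq \ell<k\\ \ell \textup{ odd}}}(-1)^{(\ell-1)/2}\csc\left(\df{\pi\ell}{2k}\right)\sum_{\substack{d|\ell\\d|k}}\mu(d)
=\sum_{d|k}\mu(d)\sum_{\substack{1\leq \ell<k\\ \ell \textup{ odd}\\d|\ell}}(-1)^{(\ell-1)/2}\csc\left(\df{\pi\ell}{2k}\right).
\end{equation*}
Setting $k=dq$ and $\ell=db$ as in \eqref{mu4.5}, and using that both $b$ and $d$ are odd to pull out $(-1)^{(d-1)/2}$, the inner sum becomes $(-1)^{(d-1)/2}T(q)$, where $T(q)$ is the sum in \eqref{mu3} but for the cosecant. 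Applying \eqref{(1.6)} (and observing that the term $d=k$ contributes $0$, since $T(1)=0$ under the natural empty-sum convention), I obtain
\begin{equation*}
\tilde{T}(k)=\df{1}{2}\sum_{d|k}(-1)^{(d-1)/2}\mu(d)\left[\df{k}{d}+(-1)^{(k/d+1)/2}\right].
\end{equation*}

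Next, I would split this into two pieces. For the piece containing $(-1)^{(k/d+1)/2}$, the same elementary congruence used in the proof of Theorem~\ref{mu1} (namely, $A+B\equiv AB+1\pmod 4$ for odd $A,B$) gives $(-1)^{(d-1)/2+(k/d+1)/2}=(-1)^{(d+k/d)/2}=(-1)^{(k+1)/2}$, which is independent of $d$. Hence this piece equals $\tfrac{1}{2}(-1)^{(k+1)/2}\sum_{d|k}\mu(d)=0$ by \eqref{mu0}, since $k>1$. Therefore only the main piece survives:
\begin{equation*}
\tilde{T}(k)=\df{k}{2}\sum_{d|k}\df{\mu(d)(-1)^{(d-1)/2}}{d}.
\end{equation*}

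Finally, since $k$ is odd, every divisor $d$ is odd, and the sign $(-1)^{(d-1)/2}$ is precisely the value at $d$ of the non-principal Dirichlet character $\chi_4$ modulo $4$. The arithmetic function $d\mapsto \mu(d)\chi_4(d)/d$ is multiplicative, supported on squarefree integers, so the divisor sum factors as the Euler product
\begin{equation*}
\sum_{d|k}\df{\mu(d)\chi_4(d)}{d}=\prod_{p|k}\left(1-\df{\chi_4(p)}{p}\right).
\end{equation*}
Separating the primes $p|k$ according to $p\equiv 1$ or $3\pmod 4$, for which $\chi_4(p)=+1$ or $-1$ respectively, yields the two products in \eqref{mu8}. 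The only subtle step is the cancellation of the $(-1)^{(k/d+1)/2}$ contribution via the parity identity, but this is already handled in the preceding proof; after that, the recognition of the character $\chi_4$ makes the factorization automatic.
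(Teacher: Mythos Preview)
Your argument is correct and follows essentially the same route as the paper: M\"obius inversion of the coprimality condition, the substitution $k=dq$, $\ell=db$, evaluation of the inner sum via \eqref{(1.6)}, elimination of the $(-1)^{(k/d+1)/2}$ term by the parity identity, and recognition of the remaining divisor sum as the Euler product $\prod_{p|k}(1-\chi_4(p)/p)$. The only cosmetic difference is that you sum over all $d\mid k$ from the outset (noting that $d=k$ contributes $T(1)=0$, and that \eqref{(1.6)} holds trivially for $q=1$), whereas the paper restricts to $d<k$ and then restores the $d=k$ term at the end; your bookkeeping is marginally cleaner.
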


If $k$ is prime, then \eqref{mu8} yields
\begin{equation*}
\tilde{T}(k)=\begin{cases}\df{k}{2}\left(1-\df{1}{k}\right), \quad &\text{if } k\equiv 1\pmod 4,\\
\df{k}{2}\left(1+\df{1}{k}\right), \quad &\text{if } k\equiv 3\pmod 4,
\end{cases}
\end{equation*}
which is in agreement with \eqref{(1.6)}.

\begin{proof} We begin by writing \eqref{(1.6)} in the form
\begin{equation}\label{mu9}
T(k)=\sum_{\substack{1\leq \ell <k\\ \ell \textup{ odd}}}(-1)^{(\ell-1)/2}\csc\left(\df{\pi \ell}{2k}\right)=\df{k+ (-1)^{(k+1)/2}}{2}.
\end{equation}

As above, employing \eqref{mu0}, we can write $\tilde{T}(k)$ as
\begin{align}\label{mu10}
\tilde{T}(k)=&\sum_{\substack{1\leq \ell<k\\ \ell \textup{ odd}}}(-1)^{(\ell-1)/2}\csc\left(\df{\pi\ell}{2k}\right)\sum_{\substack{d|\ell\\d|k}}\mu(d)
=&\sum_{\substack{d|k\\d<k}}\mu(d)\sum_{\substack{1\leq \ell<k\\ \ell \textup{ odd}\\d|\ell}}(-1)^{(\ell-1)/2}\csc\left(\df{\pi\ell}{2k}\right).
\end{align}
Making the changes of variables \eqref{mu4.5}, we find that the inner sum above can be represented by
\begin{align}\label{mu11}
\sum_{\substack{1\leq \ell<k\\ \ell \textup{ odd}\\d|\ell}}(-1)^{(\ell-1)/2}\csc\left(\df{\pi\ell}{2k}\right)
=&\sum_{\substack{1\leq b<q\\ b \textup{ odd}}}(-1)^{(db-1)/2}\csc\left(\df{\pi b}{2q}\right)\notag\\
=&(-1)^{(d-1)/2}\sum_{\substack{1\leq b<q\\ b \textup{ odd}}}(-1)^{(b-1)/2}\csc\left(\df{\pi b}{2q}\right)\notag\\
=&\chi(d)\sum_{\substack{1\leq b<q\\ b \textup{ odd}}}(-1)^{(b-1)/2}\csc\left(\df{\pi b}{2q}\right)\notag\\
=&\chi(d)\df{q+(-1)^{(q+1)/2}}{2},
\end{align}
 where  $\chi$ denotes the non-principal character modulo 4, and we applied \eqref{mu9} for the last equality.
 Remembering that $q=k/d$, we insert \eqref{mu11} into \eqref{mu10} to arrive at
 \begin{equation}\label{mu12}
 \tilde{T}(k)=\sum_{\substack{d|k\\d<k}}\mu(d)\chi(d)\df{k/d+(-1)^{(k/d+1)/2}}{2}.
 \end{equation}
 Observe that, since $k/d$ is odd,
 \begin{equation}\label{mu12.5}
 \chi(d)(-1)^{(k/d+1)/2}=-\chi(d)\chi(k/d)=-\chi(k).
 \end{equation}
 Hence, from \eqref{mu12}, \eqref{mu12.5}, and \eqref{mu0},
 \begin{align*}
 \tilde{T}(k)=&\frac12 \sum_{\substack{d|k\\d<k}}\mu(d)\left(\chi(d)\df{k}{d}-\chi(k)\right)\notag\\
 =&\df{k}{2}\sum_{\substack{d|k\\d<k}}\df{\mu(d)\chi(d)}{d}-\df{\chi(k)}{2}\sum_{\substack{d|k\\d<k}}\mu(d)\notag\\
 =&\df{k}{2}\sum_{d|k}\df{\mu(d)\chi(d)}{d}-\df{\mu(k)\chi(k)}{2}-\df{\chi(k)}{2}\sum_{d|k}\mu(d)+\df{\mu(k)\chi(k)}{2}\notag\\
 =&\df{k}{2}\sum_{d|k}\df{\mu(d)\chi(d)}{d}\notag\\
 =&\df{k}{2}\prod_{p|k}\left(1+\df{\mu(p)\chi(p)}{p}\right)\\
 =&\df{k}{2}\prod_{p|k}\left(1-\df{\chi(p)}{p}\right)\\
 =&\df{k}{2}\prod_{\substack{p|k\\p\equiv 1 \pmod 4}}\left(1-\df{1}{p}\right)\prod_{\substack{p|k\\p\equiv 3 \pmod 4}}\left(1+\df{1}{p}\right),
  \end{align*}
  which is identical to \eqref{mu8}, and so the proof is complete.
\end{proof}

We examine further analogues of Ramanujan's sum \eqref{Ramanujansum}. Return to the four evaluations in Theorem \ref{HRJ}. We evaluate analogues in the same spirit of Theorems \ref{mu1} and \ref{mu7}.

\begin{theorem}\label{mu13}
Let $k$ be an odd positive integer with $k\geq3$, and let $j$ denote an even positive integer such that $(j,k)=1$.  Then
\begin{align}\label{mu14}
\tilde{U}_1(k,j):=&\sum_{\substack{1\leq \ell<k\\\ell \textup{ odd}\\(\ell,k)=1}}
\df{\sin\left(\df{(j-1)\ell\pi}{2k}\right)\sin\left(\df{(j+1)\ell\pi}{2k}\right)}{\sin^2\left(\df{\ell\pi}{2k}\right)\sin^2\left(\df{j\ell\pi}{2k}\right)}
=\df{k^2}{3}\prod_{p|k}\left(1-\df{1}{p^2}\right),
\end{align}
where the product is over all primes $p$ that divide $k$.
\end{theorem}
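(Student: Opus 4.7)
The plan is to mimic the strategy used for Theorems \ref{mu1} and \ref{mu7}: detect the coprimality condition $(\ell,k)=1$ via the M\"obius identity \eqref{mu0}, swap the order of summation, and reduce the inner sum to the closed-form evaluation \eqref{(1.7)} already established in Theorem \ref{HRJ}. Concretely, I would write
\begin{equation*}
\tilde{U}_1(k,j)=\sum_{d\mid k}\mu(d)\sum_{\substack{1\leq \ell<k\\ \ell\text{ odd}\\ d\mid\ell}}\df{\sin\left(\df{(j-1)\ell\pi}{2k}\right)\sin\left(\df{(j+1)\ell\pi}{2k}\right)}{\sin^2\left(\df{\ell\pi}{2k}\right)\sin^2\left(\df{j\ell\pi}{2k}\right)}.
\end{equation*}

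Next, following the change of variables \eqref{mu4.5} with $k=dq$ and $\ell=db$, the arguments of all four sines simplify by a factor of $d$, so the inner sum becomes
\begin{equation*}
\sum_{\substack{1\leq b<q\\ b\text{ odd}}}\df{\sin\left(\df{(j-1)b\pi}{2q}\right)\sin\left(\df{(j+1)b\pi}{2q}\right)}{\sin^2\left(\df{b\pi}{2q}\right)\sin^2\left(\df{jb\pi}{2q}\right)},
\end{equation*}
which is precisely the left-hand side of \eqref{(1.7)} with $n$ replaced by $q=k/d$. Since $k$ is odd, every divisor $d$ is odd and so is $q$; since $j$ is even with $(j,k)=1$, we retain $(j,q)=1$ with $j$ still even. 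Hence \eqref{(1.7)} applies and the inner sum equals $(q^2-1)/3$. The boundary case $d=k$ gives $q=1$, an empty sum with value $0=(1-1)/3$, so the divisor $d=k$ may be included for free.

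Combining yields
\begin{equation*}
\tilde{U}_1(k,j)=\df{1}{3}\sum_{d\mid k}\mu(d)\left(\df{k^2}{d^2}-1\right)=\df{k^2}{3}\sum_{d\mid k}\df{\mu(d)}{d^2}-\df{1}{3}\sum_{d\mid k}\mu(d).
\end{equation*}
Since $k\geq 3$, the second sum vanishes by \eqref{mu0}, while the first is multiplicative and factors as the Euler product $\prod_{p\mid k}(1-1/p^2)$, delivering \eqref{mu14}. The only real obstacle is bookkeeping: one must verify that the parities survive the substitution (so $b$ is odd and $q$ is odd), and that $(j,q)=1$ with $j$ even, so that the hypotheses of \eqref{(1.7)} are genuinely met; once these points are settled the proof is completely parallel to that of Theorem \ref{mu7}.
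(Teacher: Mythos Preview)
Your proposal is correct and follows essentially the same route as the paper: M\"obius inversion via \eqref{mu0}, the substitution \eqref{mu4.5}, reduction of the inner sum to $U_1(q,j)=(q^2-1)/3$ by \eqref{(1.7)}, and evaluation of $\sum_{d\mid k}\mu(d)/d^2$ as the Euler product. The only cosmetic difference is that the paper initially restricts to $d<k$ and later absorbs the $d=k$ term, whereas you include it from the outset; your explicit verification that $q$ is odd, $b$ is odd, and $(j,q)=1$ with $j$ even is a helpful addition that the paper leaves implicit.
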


\begin{proof} Begin by writing \eqref{(1.7)} in the form
\begin{equation}\label{mu15}
U_1(n,j):=\sum_{\substack{1\leq \ell<n\\\ell \text{ odd}}} \df{\sin\left(\dfrac{ (j-1)\ell\pi}{2n}\right)\sin\left(\dfrac{ (j+1)\ell\pi}{2n}\right)}
{\sin^2\left(\dfrac{ \ell\pi}{2n}\right)\sin^2\left(\dfrac{j\ell\pi}{2n}\right)}=\df{n^2-1}{3}.
\end{equation}

From the definition \eqref{mu14}, \eqref{mu0}, \eqref{mu4.5}, and \eqref{mu15},
\begin{align}\label{mu16}
\tilde{U}_1(k,j)=&\sum_{\substack{1\leq \ell<k\\\ell \textup{ odd}}}
\df{\sin\left(\df{(j-1)\ell\pi}{2k}\right)\sin\left(\df{(j+1)\ell\pi}{2k}\right)}{\sin^2\left(\df{\ell\pi}{2k}\right)\sin^2\left(\df{j\ell\pi}{2k}\right)}
\sum_{\substack{d|\ell\\d|k}}\mu(d)\notag\\
=&\sum_{\substack{d|k\\d<k}}\mu(d)\sum_{\substack{1\leq \ell<k\\\ell \textup{ odd}\\d|\ell}}
\df{\sin\left(\df{(j-1)\ell\pi}{2k}\right)\sin\left(\df{(j+1)\ell\pi}{2k}\right)}{\sin^2\left(\df{\ell\pi}{2k}\right)\sin^2\left(\df{j\ell\pi}{2k}\right)}\notag\\
=&\sum_{\substack{d|k\\d<k}}\mu(d)\sum_{\substack{1\leq b<q\\b \textup{ odd}}}
\df{\sin\left(\df{(j-1)b\pi}{2q}\right)\sin\left(\df{(j+1)b\pi}{2q}\right)}{\sin^2\left(\df{b\pi}{2q}\right)\sin^2\left(\df{jb\pi}{2q}\right)}\notag\\
=&\sum_{\substack{d|k\\d<k}}\mu(d)U_1(q,j)\notag\\
=&\sum_{\substack{d|k\\d<k}}\mu(d)\df{q^2-1}{3}.
\end{align}
Now,
\begin{align}\label{mu17}
\sum_{\substack{d|k\\d<k}}\mu(d)\df{q^2-1}{3}
=&\sum_{d|k}\mu(d)\df{1}{3}\left(\df{k^2}{d^2}-1\right)
=\df{k^2}{3}\sum_{d|k}\df{\mu(d)}{d^2}\notag\\
=&\df{k^2}{3}\prod_{p|k}\left(1+\df{\mu(p)}{p^2}\right)
=\df{k^2}{3}\prod_{p|k}\left(1-\df{1}{p^2}\right),
\end{align}
where the product is over all primes $p$ dividing $k$.  Substituting \eqref{mu17} into \eqref{mu16}, we complete the proof of \eqref{mu14}.
\end{proof}

If $k$ is a prime, then
$$ \tilde{U}_1(k,j)=\df{k^2}{3}\left(1-\df{1}{k^2}\right)=\df{k^2-1}{3},$$
in agreement with \eqref{mu15}.

\begin{theorem}
Let $k$ be an even positive integer, and let $j\equiv 2 \pmod{4}$ with $(\frac{j}{2},\frac{k}{2})=1$.  Then
\begin{align*}
\tilde{U}_2(k,j):=&\sum_{\substack{1\leq \ell<k\\\ell \textup{ odd}\\(\ell,k)=1}}
\df{\sin\left(\df{(j-1)\ell\pi}{2k}\right)\sin\left(\df{(j+1)\ell\pi}{2k}\right)}{\sin^2\left(\df{\ell\pi}{2k}\right)\sin^2\left(\df{j\ell\pi}{2k}\right)}
=\df{k^2}{3}\prod_{p|k}\left(1-\df{1}{p^2}\right),
\end{align*}
where the product is over all primes $p$ that divide $k$.
\end{theorem}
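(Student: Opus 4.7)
\textit{Proof proposal.} The plan is to mimic the proof of Theorem~\ref{mu13} with \eqref{(1.7)} replaced by \eqref{(1.8)}, the only real twist being a careful parity bookkeeping. First I would reindex \eqref{(1.8)} by letting the summation index run over odd $\ell$ with $1\leq \ell<n$ (setting $\ell=2k+1$) to obtain
\[
U_2(n,j):=\sum_{\substack{1\leq \ell<n\\\ell \textup{ odd}}}\df{\sin\left(\df{(j-1)\ell\pi}{2n}\right)\sin\left(\df{(j+1)\ell\pi}{2n}\right)}{\sin^2\left(\df{\ell\pi}{2n}\right)\sin^2\left(\df{j\ell\pi}{2n}\right)}=\df{n^2}{4},
\]
valid whenever $n$ is even, $j\equiv 2\pmod 4$, and $(j/2,n/2)=1$.

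Next, using \eqref{mu0} to rewrite the coprimality condition as $\sum_{d\mid\gcd(\ell,k)}\mu(d)$ and swapping the order of summation exactly as in \eqref{mu16}, I arrive at
\[
\tilde U_2(k,j)=\sum_{\substack{d\mid k\\ d<k}}\mu(d)\sum_{\substack{1\leq \ell<k\\ \ell\textup{ odd}\\ d\mid\ell}}\df{\sin\left(\df{(j-1)\ell\pi}{2k}\right)\sin\left(\df{(j+1)\ell\pi}{2k}\right)}{\sin^2\left(\df{\ell\pi}{2k}\right)\sin^2\left(\df{j\ell\pi}{2k}\right)}.
\]
Because the outer summation variable $\ell$ is odd, only \emph{odd} divisors $d$ of $k$ produce nonempty inner sums, and the restriction $d<k$ becomes automatic since $k$ itself is even. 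Substituting $\ell=db$ and $k=dq$, the parity of $d$ forces $q=k/d$ to be even and $b$ to be odd. The hypotheses needed for $U_2(q,j)=q^2/4$ all persist: $j\equiv 2\pmod 4$ is untouched, and $(j/2,q/2)=1$ follows from $(j/2,k/2)=1$ together with $q/2\mid k/2$ (which holds because $d$ is odd and divides $k$, hence divides $k/2$).

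Applying the reindexed \eqref{(1.8)} to the inner sum then yields
\[
\tilde U_2(k,j)=\sum_{\substack{d\mid k\\ d\textup{ odd}}}\mu(d)\cdot\df{q^2}{4}=\df{k^2}{4}\sum_{\substack{d\mid k\\ d\textup{ odd}}}\df{\mu(d)}{d^2}.
\]
Writing $k=2^a m$ with $a\geq 1$ and $m$ odd, the odd divisors of $k$ are precisely those of $m$, and so the remaining sum factors as $\prod_{p\mid m}(1-1/p^2)$. Introducing the missing factor $1-\tfrac14=\tfrac34$ corresponding to the prime $p=2$, which divides $k$, gives
\[
\df{k^2}{4}\prod_{p\mid m}\left(1-\df{1}{p^2}\right)=\df{k^2}{3}\cdot\df34\prod_{p\mid m}\left(1-\df{1}{p^2}\right)=\df{k^2}{3}\prod_{p\mid k}\left(1-\df{1}{p^2}\right),
\]
the desired identity.

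The only real obstacle is the parity bookkeeping just described: one must check that exactly the odd divisors of $k$ survive the M\"obius inversion, that the resulting modulus $q=k/d$ remains even so that \eqref{(1.8)} applies cleanly to the inner sum, and that the prime $p=2$ enters the final Euler product through the artificial factor $\tfrac34$ even though it is invisible in $\sum_{d\textup{ odd}}\mu(d)/d^2$. Once these points are settled, the rest of the argument is a direct transcription of the computation in Theorem~\ref{mu13}.
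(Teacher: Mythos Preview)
Your proof is correct and follows essentially the same route as the paper: M\"obius inversion over the odd divisors of $k$, followed by \eqref{(1.8)} applied to each inner sum, yielding $\tfrac{k^2}{4}\sum_{d\mid k,\,d\text{ odd}}\mu(d)/d^2$ and then the $3/4$ factor to bring in the prime $2$. The paper packages that last step as the identity $\sum_{d\mid k}\mu(d)/d^2=\tfrac34\sum_{d\mid k,\,d\text{ odd}}\mu(d)/d^2$, while you write $k=2^a m$ and insert the Euler factor directly, but these are the same computation; your explicit verification that $(j/2,q/2)=1$ persists is a detail the paper omits.
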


\begin{proof}
We first rewrite \eqref{(1.8)} as
\begin{equation}\label{U2}
U_2(n,j):=\sum_{\substack{1\leq \ell<n\\\ell \text{ odd}}} \df{\sin\left(\dfrac{ (j-1)\ell\pi}{2n}\right)\sin\left(\dfrac{ (j+1)\ell\pi}{2n}\right)}
{\sin^2\left(\dfrac{ \ell\pi}{2n}\right)\sin^2\left(\dfrac{j\ell\pi}{2n}\right)}=\df{n^2}{4},
\end{equation}
where $n$ is an even positive integer.
By the change of variables \eqref{mu4.5} and \eqref{U2}, we find that
\begin{align}\label{U2-1}
\tilde{U}_2(k,j)=&\sum_{\substack{1\leq \ell<k\\\ell \textup{ odd}}}
\df{\sin\left(\df{(j-1)\ell\pi}{2k}\right)\sin\left(\df{(j+1)\ell\pi}{2k}\right)}{\sin^2\left(\df{\ell\pi}{2k}\right)\sin^2\left(\df{j\ell\pi}{2k}\right)}
\sum_{\substack{d|\ell\\d|k}}\mu(d)\notag\\
=&\sum_{\substack{d|k\\d \textup{ odd}}}\mu(d)\sum_{\substack{1\leq \ell<k\\\ell \textup{ odd}\\d|\ell}}
\df{\sin\left(\df{(j-1)\ell\pi}{2k}\right)\sin\left(\df{(j+1)\ell\pi}{2k}\right)}{\sin^2\left(\df{\ell\pi}{2k}\right)\sin^2\left(\df{j\ell\pi}{2k}\right)}\notag\\
=&\sum_{\substack{d|k\\d \textup{ odd}}}\mu(d)\sum_{\substack{1\leq b<q\\b \textup{ odd}}}
\df{\sin\left(\df{(j-1)b\pi}{2q}\right)\sin\left(\df{(j+1)b\pi}{2q}\right)}{\sin^2\left(\df{b\pi}{2q}\right)\sin^2\left(\df{jb\pi}{2q}\right)}\notag\\
=&\sum_{\substack{d|k\\d \textup{ odd}}}\mu(d)U_2(q,j)\notag\\
=&\sum_{\substack{d|k\\d \textup{ odd}}}\mu(d)\df{q^2}{4}=\frac{k^2}{4}\sum_{\substack{d|k\\d \textup{ odd}}}\df{\mu(d)}{d^2}.
\end{align}
Note that
\begin{align}\label{muodd}
\sum_{d|k}\df{\mu(d)}{d^2}=\sum_{\substack{d|k\\d \textup{ odd}}}\df{\mu(d)}{d^2}
+\sum_{\substack{d|k\\d \textup{ odd}}}\df{\mu(2d)}{4d^2}=
\sum_{\substack{d|k\\d \textup{ odd}}}\df{\mu(d)}{d^2}-\frac{1}{4}\sum_{\substack{d|k\\d \textup{ odd}}}\df{\mu(d)}{d^2}
=\frac{3}{4}\sum_{\substack{d|k\\d \textup{ odd}}}\df{\mu(d)}{d^2}.
\end{align}
Putting \eqref{muodd} into \eqref{U2-1} yields
\begin{align*}
\tilde{U}_2(k,j)=\frac{k^2}{3}\sum_{d|k}\df{\mu(d)}{d^2}=\df{k^2}{3}\prod_{p|k}\left(1-\df{1}{p^2}\right).
\end{align*}
This completes our proof.
\end{proof}

\begin{theorem}
Let $k$ be a positive integer such that $k\equiv 0 \pmod{4}$, and let $j\equiv 2 \pmod{4}$ with $(\frac{j}{2},\frac{k}{2})=1$.  Then
\begin{align*}
\tilde{U}_3(k,j):=&\sum_{\substack{1\leq \ell<k/2 \\(\ell,k)=1}}
\df{\sin\left(\df{(j-1)\ell\pi}{k}\right)\sin\left(\df{(j+1)\ell\pi}{k}\right)}{\sin^2\left(\df{\ell\pi}{k}\right)\sin^2\left(\df{j\ell\pi}{k}\right)}
=\df{k^2}{12}\prod_{p|k}\left(1-\df{1}{p^2}\right),
\end{align*}
where the product is over all primes $p$ that divide $k$.
\end{theorem}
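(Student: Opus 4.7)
The plan is to mirror the M\"obius inversion approach that succeeded for $\tilde{U}_1(k,j)$ and $\tilde{U}_2(k,j)$. First I would rewrite \eqref{(1.9)} (part (1) of Theorem \ref{HRJ}) in the form
\begin{equation*}
U_3(n,j):=\sum_{\ell=1}^{n/2-1} \frac{\sin\left(\frac{(j-1)\ell\pi}{n}\right)\sin\left(\frac{(j+1)\ell\pi}{n}\right)}{\sin^2\left(\frac{\ell\pi}{n}\right)\sin^2\left(\frac{j\ell\pi}{n}\right)}=\frac{n^2-4}{12},
\end{equation*}
valid whenever $n$ is even, $j\equiv 2\pmod{4}$, and $(\tfrac{j}{2},\tfrac{n}{2})=1$.

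Next, inserting the indicator identity $[(\ell,k)=1]=\sum_{d\mid(\ell,k)}\mu(d)$ from \eqref{mu0} and interchanging the order of summation yields
\begin{equation*}
\tilde{U}_3(k,j)=\sum_{d\mid k}\mu(d)\sum_{\substack{1\leq \ell<k/2\\ d\mid \ell}}\frac{\sin\left(\frac{(j-1)\ell\pi}{k}\right)\sin\left(\frac{(j+1)\ell\pi}{k}\right)}{\sin^2\left(\frac{\ell\pi}{k}\right)\sin^2\left(\frac{j\ell\pi}{k}\right)}.
\end{equation*}
After the change of variables $\ell=db$, $k=dq$ (cf.\ \eqref{mu4.5}), the inner sum becomes precisely $U_3(q,j)$, so the task reduces to verifying that the hypotheses on $U_3$ are satisfied for every $d\mid k$ with $\mu(d)\neq 0$.

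The delicate step---and what I expect to be the main obstacle---is checking those hypotheses uniformly in $d$, since $q=k/d$ varies with $d$. Squarefreeness of $d$ (forced by $\mu(d)\neq 0$) gives $v_2(d)\leq 1$, while the assumption $k\equiv 0\pmod{4}$ gives $v_2(k)\geq 2$; hence $v_2(q)=v_2(k)-v_2(d)\geq 1$ and $q$ is even. Because $q/2$ divides $k/2$, the hypothesis $(\tfrac{j}{2},\tfrac{k}{2})=1$ passes down to $(\tfrac{j}{2},\tfrac{q}{2})=1$, and $j\equiv 2\pmod{4}$ is preserved. Thus $U_3(q,j)=(q^2-4)/12$ applies term by term. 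This is precisely the role played by the stronger hypothesis $k\equiv 0\pmod{4}$ rather than merely $k$ even.

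Finally, assembling the pieces produces
\begin{equation*}
\tilde{U}_3(k,j)=\sum_{d\mid k}\mu(d)\frac{(k/d)^2-4}{12}=\frac{k^2}{12}\sum_{d\mid k}\frac{\mu(d)}{d^2}-\frac{1}{3}\sum_{d\mid k}\mu(d)=\frac{k^2}{12}\prod_{p\mid k}\left(1-\frac{1}{p^2}\right),
\end{equation*}
where $\sum_{d\mid k}\mu(d)=0$ by \eqref{mu0} (since $k\geq 4>1$) and the remaining sum factors as the Euler product via standard multiplicativity, matching the claimed evaluation.
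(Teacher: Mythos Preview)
Your proof is correct and follows the same M\"obius-inversion template as the paper, but your organization is cleaner. The paper splits the divisor sum into odd $d$ and $d=2d'$ with $d'$ odd, handles the two resulting inner sums separately (with quotients $q=k/d$ and $q'=k/(2d')$), and then recombines them via the auxiliary identity \eqref{muodd}. You instead observe in one stroke that for any squarefree $d\mid k$ one has $v_2(d)\le 1<2\le v_2(k)$, so $q=k/d$ is automatically even and $U_3(q,j)=(q^2-4)/12$ applies uniformly; the final sum $\sum_{d\mid k}\mu(d)\,(k/d)^2/12$ then factors directly as the Euler product, with the $-4/12$ piece killed by $\sum_{d\mid k}\mu(d)=0$. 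This bypasses both the odd/even case split and the appeal to \eqref{muodd}, and it also makes transparent why the hypothesis $k\equiv 0\pmod 4$ (rather than merely $k$ even) is exactly what is needed.
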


\begin{proof}
By \eqref{(1.9)},  we have
\begin{align}\label{U3}
U_3(n,j):=&\sum_{1\leq \ell<n/2}
\df{\sin\left(\df{(j-1)\ell\pi}{n}\right)\sin\left(\df{(j+1)\ell\pi}{n}\right)}{\sin^2\left(\df{\ell\pi}{n}\right)\sin^2\left(\df{j\ell\pi}{n}\right)}
=\frac{n^2-4}{12},
\end{align}
where $n$ is an even positive integer. Now, since  $k\equiv 0 \pmod{4},$
\begin{align*}
\tilde{U}_3(k,j)&=\sum_{1\leq \ell<k/2 }
\df{\sin\left(\df{(j-1)\ell\pi}{k}\right)\sin\left(\df{(j+1)\ell\pi}{k}\right)}{\sin^2\left(\df{\ell\pi}{k}\right)\sin^2\left(\df{j\ell\pi}{k}\right)}
\sum_{\substack{d|\ell\\d|k}}\mu(d)\\
&=\sum_{\substack{d | k\\d<k/2}}\mu(d)\sum_{\substack{1\leq \ell<k/2 \\ d | \ell} }
\df{\sin\left(\df{(j-1)\ell\pi}{k}\right)\sin\left(\df{(j+1)\ell\pi}{k}\right)}{\sin^2\left(\df{\ell\pi}{k}\right)\sin^2\left(\df{j\ell\pi}{k}\right)}\\
&=\sum_{\substack{d | k\\d<k/2\\d \textup{ odd}}}\mu(d)\sum_{\substack{1\leq \ell<k/2 \\ d | \ell} }
\df{\sin\left(\df{(j-1)\ell\pi}{k}\right)\sin\left(\df{(j+1)\ell\pi}{k}\right)}{\sin^2\left(\df{\ell\pi}{k}\right)\sin^2\left(\df{j\ell\pi}{k}\right)}\\
&+\sum_{\substack{d | k\\d<k/4\\d \textup{ odd}}}\mu(2d)\sum_{\substack{1\leq \ell<k/2 \\ 2d | \ell} }
\df{\sin\left(\df{(j-1)\ell\pi}{k}\right)\sin\left(\df{(j+1)\ell\pi}{k}\right)}{\sin^2\left(\df{\ell\pi}{k}\right)\sin^2\left(\df{j\ell\pi}{k}\right)}\\
&=\sum_{\substack{d | k\\d \textup{ odd}}}\mu(d)\sum_{1\leq b<q/2}
\df{\sin\left(\df{(j-1)b\pi}{q}\right)\sin\left(\df{(j+1)b\pi}{q}\right)}{\sin^2\left(\df{b\pi}{q}\right)\sin^2\left(\df{jb\pi}{q}\right)}\\
&-\sum_{\substack{d | k\\d \textup{ odd}}}\mu(d)\sum_{1\leq b'<q'/2}
\df{\sin\left(\df{(j-1)b'\pi}{q'}\right)\sin\left(\df{(j+1)b'\pi}{q'}\right)}{\sin^2\left(\df{b'\pi}{q'}\right)\sin^2\left(\df{jb'\pi}{q'}\right)},
\end{align*}
where we used the change of variables $k=2dq'$ and $\ell=2db'$ for the last summation.
Observe that $q=k/d$ and $q'=k/(2d)$ are both even. Thus, by \eqref{U3} and \eqref{muodd},
\begin{align*}
\tilde{U}_3(k,j)&=\sum_{\substack{d | k\\d \textup{ odd}}}\mu(d)U_3(q,j)-\sum_{\substack{d | k\\d \textup{ odd}}}\mu(d)U_3(q',j)\\
&=\frac{1}{12}\sum_{\substack{d | k\\d \textup{ odd}}}\mu(d)(q^2-4)
-\frac{1}{12}\sum_{\substack{d | k\\d \textup{ odd}}}\mu(d)(q'^2-4)\\
&=\frac{1}{12}\sum_{\substack{d | k\\d \textup{ odd}}}\mu(d)\Big(\frac{k^2}{d^2}-4\Big)
-\frac{1}{12}\sum_{\substack{d | k\\d \textup{ odd}}}\mu(d)\Big(\frac{k^2}{4d^2}-4\Big)\\
&=\frac{k^2}{16}\sum_{\substack{d | k\\d \textup{ odd}}}\frac{\mu(d)}{d^2}
=\frac{k^2}{12}\sum_{d | k}\frac{\mu(d)}{d^2}=\df{k^2}{12}\prod_{p|k}\left(1-\df{1}{p^2}\right),
\end{align*}
which completes the proof.
\end{proof}

In a similar vein,  we can derive the following from \eqref{(1.10)}.
\begin{theorem}
Let $k$ be an odd positive integer with $k\geq3$, and let $j$ denote a positive integer such that $(j,k)=1$.  Then
\begin{align*}
\tilde{U}_4(k,j):=&\sum_{\substack{1\leq \ell<k\\\ell \textup{ odd}\\(\ell,k)=1}}
\df{\sin\left(\df{(j-1)\ell\pi}{k}\right)\sin\left(\df{(j+1)\ell\pi}{k}\right)}{\sin^2\left(\df{\ell\pi}{k}\right)\sin^2\left(\df{j\ell\pi}{k}\right)}
=0.
\end{align*}
\end{theorem}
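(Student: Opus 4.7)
The proof should follow the template already established for $\tilde{U}_1(k,j)$ in Theorem \ref{mu13}: rewrite the original identity \eqref{(1.10)} so that the summation index $2k+1$ is replaced by an odd integer $\ell$ ranging over $1\le \ell<k$, then apply Möbius inversion to insert the coprimality condition $(\ell,k)=1$, and finally check that each inner sum reduces to a known evaluation.

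Concretely, the plan is as follows. First, rewrite \eqref{(1.10)} in the form
\begin{equation*}
U_4(n,j):=\sum_{\substack{1\le \ell<n\\ \ell\textup{ odd}}}
\df{\sin\left(\df{(j-1)\ell\pi}{n}\right)\sin\left(\df{(j+1)\ell\pi}{n}\right)}{\sin^2\left(\df{\ell\pi}{n}\right)\sin^2\left(\df{j\ell\pi}{n}\right)}=0,
\end{equation*}
valid for $n\ge 3$ odd with $(j,n)=1$. Next, apply \eqref{mu0} to write
\begin{equation*}
\tilde{U}_4(k,j)=\sum_{\substack{1\le \ell<k\\ \ell\textup{ odd}}} f(\ell)\sum_{\substack{d|\ell\\d|k}}\mu(d)
=\sum_{\substack{d|k\\d<k}}\mu(d)\sum_{\substack{1\le \ell<k\\ \ell\textup{ odd}\\d|\ell}} f(\ell),
\end{equation*}
where $f(\ell)$ denotes the summand. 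The constraint $d<k$ is automatic because for $d=k$ the inner sum is empty.

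Now perform the substitutions \eqref{mu4.5}, i.e., $k=dq$ and $\ell=db$. Since $k$ is odd, every divisor $d$ of $k$ is odd, and therefore $\ell=db$ is odd precisely when $b$ is odd; similarly $q=k/d$ is odd. The inner sum becomes
\begin{equation*}
\sum_{\substack{1\le b<q\\ b\textup{ odd}}}
\df{\sin\left(\df{(j-1)b\pi}{q}\right)\sin\left(\df{(j+1)b\pi}{q}\right)}{\sin^2\left(\df{b\pi}{q}\right)\sin^2\left(\df{jb\pi}{q}\right)}=U_4(q,j).
\end{equation*}
Because $(j,k)=1$ and $q\mid k$, we have $(j,q)=1$, so $U_4(q,j)=0$ by the rewritten form of \eqref{(1.10)} when $q\ge 3$; and for $q=1$ the sum is empty, hence also zero. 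Consequently
\begin{equation*}
\tilde{U}_4(k,j)=\sum_{\substack{d|k\\d<k}}\mu(d)\cdot 0=0.
\end{equation*}

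There is essentially no obstacle here; the only point that warrants care is verifying the parity and coprimality of $q=k/d$ so that the vanishing identity $U_4(q,j)=0$ genuinely applies to the inner sum, and confirming that the degenerate case $q=1$ (when $d=k$) also contributes zero. These are both immediate from the hypothesis that $k$ is odd and $(j,k)=1$.
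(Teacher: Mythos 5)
Your proposal is correct and is exactly the argument the paper intends: the paper omits the proof with the remark that it follows ``in a similar vein'' from \eqref{(1.10)}, i.e., the same M\"obius-inversion template as in Theorem \ref{mu13}, which is precisely what you carry out (including the correct observations that every divisor $d$ of the odd integer $k$ is odd, that $q=k/d$ is odd with $(j,q)=1$, and that the inner sums are instances of $U_4(q,j)=0$).
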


\section{An Analogue of the Franel--Landau Criterion for the Riemann Hypothesis}  First define the Farey sequence and Farey fractions.
\begin{definition} Let
\begin{equation*}
\mathfrak{F}_Q=\left\{\frac{a}{q}:\,\, a,q\in \mathbb{Z}, \,\,1\leq q \leq Q,\,\, 1\leq a \leq q,\,\, (a,q)=1\right\}.
\end{equation*}
These fractions are the Farey fractions of order $Q$.
Let $N(Q)$ denote the number of Farey fractions in $\mathfrak{F}_Q$.  The Farey sequence of order $Q$ is obtained by arranging the fractions $\gamma_j$ of $\mathfrak{F}_Q$ in increasing order, i.e.,  $\gamma_1, \gamma_2, \dots, \gamma_{N(Q)}$.
\end{definition}
For brevity, set $e(x)=e^{2\pi ix}$.  Recall the definition of the Ramanujan sum in \eqref{Ramanujansum}.  From \cite[p.~10, Equation (1.5.3)]{titchmarsh}
\begin{equation}\label{titchmarsh}
c_q(1) =\sum_{\substack{k=1 \\ (k,q)=1}}^q e(k/q) =\mu(q).
\end{equation}
Summing over $q, 1\leq q\leq Q$, we find that
\begin{equation}\label{gamma,mu}
\sum_{\gamma_j\in\mathfrak{F}_Q}e(\gamma_j)=\sum_{1\leq q \leq Q}\mu(q).
\end{equation}
It is well-known \cite[p.~261]{edwards} that the Riemann Hypothesis is equivalent to the statement
\begin{equation}\label{RH}
\sum_{1\leq q \leq Q}\mu(q)=O_{\epsilon}(Q^{\frac12+\epsilon}),
\end{equation}
as $Q\to \infty$, for each $\epsilon>0$. For brevity, abbreviate the Riemann Hypothesis by RH. Hence, by \eqref{gamma,mu} and \eqref{RH}, as $Q\to\infty$,
\begin{equation}\label{RHequiv}
\text{RH holds} \iff \sum_{\gamma_j\in\mathfrak{F}_Q}e(\gamma_j)=O_{\epsilon}(Q^{\frac12+\epsilon}),
\end{equation}
for every $\epsilon>0$.

Recall that
\begin{equation*}
\sum_{q=1}^{\infty}\df{\mu(q)}{q^s} =\df{1}{\zeta(s)}, \quad \Re(s) >1.
\end{equation*}
From the product representation of $\zeta(s)$ it follows that
$$\sum_{\substack{q\geq 1\\q \textup{ odd}}}\df{\mu(q)}{q^s} =\df{1}{\left(1-\frac{1}{2^s}\right)\zeta(s)}, \quad \Re(s) >1.$$
Using an argument analogous to that yielding \eqref{RHequiv}, we can deduce that, as $Q\to \infty$,
\begin{equation}\label{RHequiv1}
\text{RH holds} \iff \sum_{\substack{1\leq q \leq Q\\q\textup{ odd}}}\mu(q)=O_{\epsilon}(Q^{\frac12+\epsilon}),
\end{equation}
 for every $\epsilon>0$.

Recall from Theorem \ref{mu1} that
\begin{equation}\label{aq}
\tilde{S}(q):=\sum_{\substack{1\leq a<q\\ a \textup{ odd}\\(a,q)=1}}(-1)^{(a-1)/2}\sin\left(\df{\pi a}{2q}\right)=\df{(-1)^{(q-1)/2}}{2}\mu(q).
\end{equation}
Let $\chi(n)$ denote the non-principal character modulo 4, i.e.,
\begin{equation*}
\chi(n)=\begin{cases}(-1)^{(n-1)/2},\quad &n \textup{ odd}\\
0,  &n \textup{ even}.
\end{cases}
\end{equation*}
Multiplying both sides of \eqref{aq} by $\chi(q)$ and
summing over odd $q$, $1\leq q\leq Q$, we find that
\begin{equation}\label{aq1}
\sum_{\substack{ a, q \textup{ odd}\\\frac{a}{q}\in\mathfrak{F}_Q}}\chi(aq)\sin\left(\df{\pi a}{2q} \right)=\df{1}{2}\sum_{\substack{1\leq q\leq Q\\q \textup{ odd}}}\mu(q).
\end{equation}
Combining \eqref{aq1} with \eqref{RHequiv1}, we conclude the following theorem.

\begin{theorem}\label{thmRH} As $Q\to \infty$,
\begin{equation}\label{RHequiv2}
\textup{RH holds} \iff \sum_{\substack{a, q \textup{ odd}\\\frac{a}{q}\in\mathfrak{F}_Q}}\chi(aq)\sin\left(\df{\pi a}{2q}\right)=O_{\epsilon}(Q^{\frac12+\epsilon}),
\end{equation}
 for each $\epsilon >0$.
\end{theorem}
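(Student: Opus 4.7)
The plan is to derive the theorem as an essentially immediate consequence of Theorem \ref{mu1} and the standard equivalence \eqref{RHequiv1}. The crucial observation is that the sign factor $(-1)^{(q-1)/2}$ appearing on the right-hand side of \eqref{aq} is exactly $\chi(q)$ when $q$ is odd, and that $\chi$ is completely multiplicative with $\chi(q)^2 = 1$ on odd integers. These two features allow the evaluation from Theorem \ref{mu1} to be repackaged so that the sign becomes $\mu(q)$ alone on the arithmetic side, while on the trigonometric side the signs $(-1)^{(a-1)/2}$ and $(-1)^{(q-1)/2}$ merge into the single factor $\chi(aq)$.

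I would carry this out in three short steps. First, using $(-1)^{(a-1)/2} = \chi(a)$ for odd $a$, rewrite
\begin{equation*}
\tilde{S}(q) = \sum_{\substack{1\leq a < q\\ a \text{ odd}\\ (a,q)=1}} \chi(a)\sin\left(\frac{\pi a}{2q}\right).
\end{equation*}
Next, multiply both sides of \eqref{aq} by $\chi(q)$ and sum over odd $q$ in $[1,Q]$. On the left, multiplicativity of $\chi$ gives $\chi(a)\chi(q) = \chi(aq)$, and the indexing $1\leq a < q$ with $(a,q)=1$, together with $a,q$ both odd, is exactly the condition $\frac{a}{q}\in\mathfrak{F}_Q$ with $a,q$ odd. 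On the right, $\chi(q)\cdot(-1)^{(q-1)/2} = \chi(q)^2 = 1$, so the sign collapses and one recovers \eqref{aq1}, namely
\begin{equation*}
\sum_{\substack{a,q \text{ odd}\\ a/q \in \mathfrak{F}_Q}} \chi(aq)\sin\left(\frac{\pi a}{2q}\right) = \frac{1}{2}\sum_{\substack{1\leq q \leq Q\\ q \text{ odd}}}\mu(q).
\end{equation*}
Finally, \eqref{RHequiv1} asserts that the right-hand side is $O_\epsilon(Q^{1/2+\epsilon})$ for every $\epsilon > 0$ if and only if RH holds, and this bound transfers verbatim to the trigonometric sum, yielding \eqref{RHequiv2}.

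There is no real obstacle; every ingredient has been prepared earlier in the paper. The proof reduces to a bookkeeping check that the parity and coprimality conditions on $(a,q)$ in $\tilde{S}(q)$ line up with the ``Farey fraction with odd numerator and denominator'' indexing on the left of \eqref{RHequiv2}. The only stylistic choice is whether to display the intermediate identity \eqref{aq1} explicitly, which I would do so that the reduction to \eqref{RHequiv1} is completely transparent.
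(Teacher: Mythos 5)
Your proposal is correct and follows essentially the same route as the paper: rewrite the signs in \eqref{aq} via $\chi$, multiply by $\chi(q)$ and sum over odd $q\leq Q$ to obtain \eqref{aq1}, then invoke \eqref{RHequiv1}. This is precisely the paper's argument, so there is nothing to add beyond the (harmless, $O(1)$) boundary convention at $q=1$, which the paper also leaves implicit.
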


The truth of \eqref{RHequiv} involves cancellations of $\sin\left(2\pi a/q\right)$ and $\cos\left(2\pi a/q\right)$.  However, in \eqref{RHequiv2}, $\sin\left(\pi a/(2q)\right)>0$.  Thus, the required cancellations in \eqref{RHequiv2} arise from the changes of sign of $\chi(aq)$.

 Theorem \ref{thmRH} gives a criterion for the Riemann Hypothesis to hold, but, as a point of contrast, the coefficients $\chi(aq)$ of the Dirichlet $L$-function, $L(s)$ := $\sum_{n=1}^{\infty}\chi(n)n^{-s}$, $\Re(s)>0$, with its own Generalized Riemann Hypothesis, appear.

 Theorem \ref{mu1} has some further
applications, which are not pursued in the present paper. Such
applications provide new connections between the Riemann Hypothesis and the
distribution of Farey fractions with parity constraints in the numerators and denominators \cite{BKZ3}.

 H.~Edwards's book \cite[pp.~263--267]{edwards} gives a detailed discussion of the Franel--Landau criterion.  With a plethora of references, the comprehensive survey \cite{cz} by C.~Cobeli and the third author provides accounts of many related problems.

\section{Sums of Quotients of Sines}\label{section2}
 As noted below, several of the results in this section were first established in \cite{HRJ} and \cite{palestine} by using Ramanujan's modular equations.  Shorter, more direct proofs were given in \cite{BKZ1}.  We now provide more systematic proofs, which are dependent upon more general identities.


\begin{theorem}\label{2-T1}
If $k$ is an odd positive integer, then
\begin{equation*}
\sum_{j=1}^{\frac{k-1}{2}}(-1)^{j-1}\frac{\sin\big(\frac{2j\pi}{k}\big)}{\sin\big(\frac{j\pi}{k}\big)}=1.
\end{equation*}
\end{theorem}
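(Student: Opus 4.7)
The starting observation is that $\sin(2j\pi/k)/\sin(j\pi/k)=2\cos(j\pi/k)$ for every $j$ with $1\le j\le (k-1)/2$, so the identity to prove reduces to showing
\begin{equation*}
\sum_{j=1}^{(k-1)/2}(-1)^{j-1}\cos\Big(\frac{j\pi}{k}\Big)=\frac12.
\end{equation*}
Thus the goal is to evaluate a simple alternating cosine sum; no real subtlety remains beyond a geometric-series computation and a parity argument.

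My plan is to first exploit the symmetry $j\mapsto k-j$. Since $k$ is odd, $\cos((k-j)\pi/k)=-\cos(j\pi/k)$ and $(-1)^{(k-j)-1}=(-1)^j=-(-1)^{j-1}$, so the terms corresponding to $j$ and $k-j$ in the full-range sum $\sum_{j=1}^{k-1}(-1)^{j-1}\cos(j\pi/k)$ coincide. Pairing them yields
\begin{equation*}
\sum_{j=1}^{k-1}(-1)^{j-1}\cos\Big(\frac{j\pi}{k}\Big)=2\sum_{j=1}^{(k-1)/2}(-1)^{j-1}\cos\Big(\frac{j\pi}{k}\Big),
\end{equation*}
so it suffices to show the left-hand side equals $1$.

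Next I would compute the extended sum $\sum_{j=0}^{k-1}(-1)^{j}\cos(j\pi/k)=\Re\sum_{j=0}^{k-1}(-e^{i\pi/k})^{j}$. Because $k$ is odd, $(-e^{i\pi/k})^{k}=(-1)^k e^{i\pi}=1$, so the geometric series evaluates to $0$. Separating the $j=0$ term gives $1-\sum_{j=1}^{k-1}(-1)^{j-1}\cos(j\pi/k)=0$, hence $\sum_{j=1}^{k-1}(-1)^{j-1}\cos(j\pi/k)=1$. Combined with the pairing above, this yields $\sum_{j=1}^{(k-1)/2}(-1)^{j-1}\cos(j\pi/k)=\tfrac12$, and multiplying by $2$ produces the claimed value $1$.

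The only potential snag is keeping the parity bookkeeping straight when applying $j\mapsto k-j$, but since $k$ is odd the signs balance exactly as needed, so there is no genuine obstacle.
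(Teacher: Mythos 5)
Your proof is correct and follows essentially the same route as the paper: the double-angle reduction to $2\sum_{j=1}^{(k-1)/2}(-1)^{j-1}\cos(j\pi/k)$, the symmetry $j\mapsto k-j$ to pass to the full sum over $1\le j\le k-1$, and a geometric-series evaluation of $\sum_j(-e^{\pi i/k})^j$. The only cosmetic difference is that you include the $j=0$ term and use $(-e^{\pi i/k})^k=1$, while the paper sums the finite geometric series from $j=1$ to $k-1$ directly; both implicitly require $k\ge 3$, where the statement is nontrivial.
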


\begin{proof} Using the double angle formula for sine, we find that
\begin{align}\label{2-1}
&\sum_{j=1}^{\frac{k-1}{2}}(-1)^{j-1}\frac{\sin\big(\frac{2j\pi}{k}\big)}{\sin\big(\frac{j\pi}{k}\big)}
=2\sum_{j=1}^{\frac{k-1}{2}}(-1)^{j-1}\cos\Big(\frac{j\pi}{k}\Big)
=\sum_{j=1}^{k-1}(-1)^{j-1}\cos\Big(\frac{j\pi}{k}\Big)\notag\\
&\qquad =\Re\left\{\sum_{j=1}^{k-1}(-1)^{j-1}e^{\frac{j\pi i}{k}}\right\}
=\Re\left\{\frac{e^{\frac{\pi i}{k}}(1-e^{\frac{(k-1)\pi i}{k}})}{1+e^{\frac{\pi i}{k}}} \right\}=1.
\end{align}
\end{proof}

\begin{corollary}[Theorem 4.2 in \cite{BKZ2}; Equation (1.8) in \cite{palestine}]
\begin{align*}
&\df{\sin(6\pi/17)}{\sin(3\pi/17)}-\df{\sin(4\pi/17)}{\sin(2\pi/17)}-\df{\sin(8\pi/17)}{\sin(4\pi/17)}+\df{\sin(2\pi/17)}{\sin(\pi/17)}
\notag\\
&+\df{\sin(7\pi/17)}{\sin(5\pi/17)}-\df{\sin(5\pi/17)}{\sin(6\pi/17)}+\df{\sin(3\pi/17)}{\sin(7\pi/17)}-\df{\sin(\pi/17)}{\sin(8\pi/17)}=1.
\end{align*}
\end{corollary}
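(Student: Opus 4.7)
The plan is to deduce the corollary directly from Theorem \ref{2-T1} by specializing $k=17$ and then reducing the resulting sines to the standard range $(0,\pi/2)$ using the supplementary-angle identity $\sin(\pi-\theta)=\sin\theta$.

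Setting $k=17$ in Theorem \ref{2-T1} gives $(k-1)/2 = 8$, so
\begin{equation*}
\sum_{j=1}^{8}(-1)^{j-1}\frac{\sin(2j\pi/17)}{\sin(j\pi/17)} = 1.
\end{equation*}
The four summands corresponding to $j=1,2,3,4$ already have $2j\pi/17 < \pi$, and a direct inspection identifies them with the terms $+\sin(2\pi/17)/\sin(\pi/17)$, $-\sin(4\pi/17)/\sin(2\pi/17)$, $+\sin(6\pi/17)/\sin(3\pi/17)$, and $-\sin(8\pi/17)/\sin(4\pi/17)$ appearing in the corollary.

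For $j=5,6,7,8$ the numerators $\sin(2j\pi/17)$ involve angles $10\pi/17,12\pi/17,14\pi/17,16\pi/17$, all lying in $(\pi/2,\pi)$. Applying $\sin(\theta)=\sin(\pi-\theta)$ rewrites these respectively as $\sin(7\pi/17)$, $\sin(5\pi/17)$, $\sin(3\pi/17)$, $\sin(\pi/17)$. Combined with the signs $(-1)^{j-1} = +,-,+,-$ for $j=5,6,7,8$, these four contributions become exactly the terms $+\sin(7\pi/17)/\sin(5\pi/17)$, $-\sin(5\pi/17)/\sin(6\pi/17)$, $+\sin(3\pi/17)/\sin(7\pi/17)$, and $-\sin(\pi/17)/\sin(8\pi/17)$ in the corollary.

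There is essentially no obstacle: the content is purely bookkeeping, matching the eight summands in Theorem \ref{2-T1} at $k=17$ with the eight summands displayed in the corollary after the one-step reflection $\sin(2j\pi/17) = \sin((17-2j)\pi/17)$ for $j \geq 5$. Once the correspondence is tabulated, equality with $1$ is immediate from Theorem \ref{2-T1}.
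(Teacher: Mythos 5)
Your proof is correct and is exactly the paper's argument: the paper's proof consists of the single line "Let $k=17$ in Theorem \ref{2-T1}," with the reflection $\sin(2j\pi/17)=\sin((17-2j)\pi/17)$ for $j\geq 5$ left implicit. Your write-up simply makes that bookkeeping explicit.
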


\begin{proof} Let $k=17$ in Theorem \ref{2-T1}.
\end{proof}

\begin{corollary}[Theorem 4.4 in \cite{BKZ2}; Equation (1.3) in \cite{palestine}]
\begin{align*}
&\df{\sin(4\pi/13)}{\sin(2\pi/13)}-\df{\sin(6\pi/13)}{\sin(3\pi/13)}-\df{\sin(2\pi/13)}{\sin(\pi/13)}\notag\\
&+\df{\sin(5\pi/13)}{\sin(4\pi/13)}-\df{\sin(3\pi/13)}{\sin(5\pi/13)}+\df{\sin(\pi/13)}{\sin(6\pi/13)}=-1.
\end{align*}
\end{corollary}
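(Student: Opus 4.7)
The plan is to apply Theorem~\ref{2-T1} directly with $k=13$ and then tidy the resulting sum using the elementary reflection identity $\sin(\pi - x) = \sin x$.

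Setting $k=13$ in Theorem~\ref{2-T1} gives
\begin{equation*}
\sum_{j=1}^{6}(-1)^{j-1}\frac{\sin(2j\pi/13)}{\sin(j\pi/13)}=1,
\end{equation*}
so my first step is simply to write out these six terms explicitly. For $j=1,2,3$ the numerator $\sin(2j\pi/13)$ already appears in the form displayed in the corollary, but for $j=4,5,6$ the argument $2j\pi/13$ exceeds $\pi/2$, in fact exceeds $\pi/2$ with $2j > 13/2$ requiring reflection. Using $\sin(8\pi/13)=\sin(5\pi/13)$, $\sin(10\pi/13)=\sin(3\pi/13)$, and $\sin(12\pi/13)=\sin(\pi/13)$, I would rewrite the three high-index terms so that every angle is at most $6\pi/13$, matching the form in the statement of the corollary.

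After this substitution the identity reads
\begin{equation*}
\frac{\sin(2\pi/13)}{\sin(\pi/13)}-\frac{\sin(4\pi/13)}{\sin(2\pi/13)}+\frac{\sin(6\pi/13)}{\sin(3\pi/13)}-\frac{\sin(5\pi/13)}{\sin(4\pi/13)}+\frac{\sin(3\pi/13)}{\sin(5\pi/13)}-\frac{\sin(\pi/13)}{\sin(6\pi/13)}=1,
\end{equation*}
and multiplying through by $-1$ yields precisely the identity asserted by the corollary. There is no genuine obstacle here, as the result follows mechanically once the reflection identity is applied and a global sign is adjusted; the only bookkeeping to watch is that the sign flip coming from reflection in the $j=4,5,6$ terms is absorbed by the overall multiplication by $-1$, so the alternating pattern of signs in the corollary matches exactly.
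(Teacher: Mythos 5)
Your proposal is correct and is exactly the paper's proof: the paper's entire argument is ``Let $k=13$ in Theorem \ref{2-T1},'' and your explicit use of $\sin(\pi-x)=\sin x$ on the $j=4,5,6$ terms followed by multiplying by $-1$ just spells out the bookkeeping the paper leaves implicit. One small wording quibble: the reflection identity introduces no sign flip at all, so nothing needs to be ``absorbed'' --- the factor $-1$ arises solely because the corollary lists the terms with all signs reversed relative to the theorem's sum.
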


\begin{proof} Let $k=13$ in Theorem \ref{2-T1}.
\end{proof}


\begin{theorem}\label{2-T2}
Let $k$ be an odd positive integer and let $a$ be an integer with $1\leq a \leq \frac{k-1}{2}.$
If $k=nm$ with positive odd integers $n$ and $m,$ then
\begin{equation}\label{2-2}
\frac{\sin\big(\frac{2a\pi}{k}\big)}{\sin\big(\frac{a\pi}{k}\big)}-
\sum_{j=1}^{\frac{n-1}{2}}(-1)^{j-1}\left\{\frac{\sin\big(\frac{2(mj-a)\pi}{k}\big)}{\sin\big(\frac{(mj-a)\pi}{k}\big)}
+\frac{\sin\big(\frac{2(mj+a)\pi}{k}\big)}{\sin\big(\frac{(mj+a)\pi}{k}\big)}\right\}=0.
\end{equation}
\end{theorem}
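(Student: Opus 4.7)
The plan is to reduce \eqref{2-2} to Theorem \ref{2-T1} applied with modulus $n$, by exploiting the double-angle formula and a sum-to-product identity.

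First, I would apply $\sin(2x)/\sin(x) = 2\cos(x)$ to every summand, so that \eqref{2-2} becomes the assertion
\begin{equation*}
\cos\Bigl(\tfrac{a\pi}{k}\Bigr) - \sum_{j=1}^{(n-1)/2}(-1)^{j-1}\left\{\cos\Bigl(\tfrac{(mj-a)\pi}{k}\Bigr)+\cos\Bigl(\tfrac{(mj+a)\pi}{k}\Bigr)\right\}=0.
\end{equation*}
Next, since $k=nm$, I rewrite $(mj\pm a)\pi/k = j\pi/n \pm a\pi/k$ and use $\cos(A-B)+\cos(A+B)=2\cos A\cos B$ with $A=j\pi/n$, $B=a\pi/k$. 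The bracketed sum collapses to $2\cos(a\pi/k)\cos(j\pi/n)$, so the left side factors as
\begin{equation*}
\cos\Bigl(\tfrac{a\pi}{k}\Bigr)\left\{1 - 2\sum_{j=1}^{(n-1)/2}(-1)^{j-1}\cos\Bigl(\tfrac{j\pi}{n}\Bigr)\right\}.
\end{equation*}

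Finally, I would invoke Theorem \ref{2-T1} with $k$ replaced by the odd integer $n$ (which is $\ge 1$; the case $n=1$ makes the sum empty and $m=k$, in which case the original identity is trivial). By that theorem combined with $\sin(2j\pi/n)/\sin(j\pi/n)=2\cos(j\pi/n)$, the inner brace vanishes, establishing \eqref{2-2}.

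There is no real obstacle here; the only point requiring a moment's attention is that the factor $\cos(a\pi/k)$ need not be zero for $1\le a\le (k-1)/2$, so the vanishing has to come from the bracketed cotangent-free sum, and it is precisely this bracket that is handled by the previous theorem. The proof is essentially a one-line reduction after applying the double-angle and sum-to-product identities.
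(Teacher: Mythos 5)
Your argument coincides with the paper's proof: the authors likewise apply the double-angle formula, use $\cos(A-B)+\cos(A+B)=2\cos A\cos B$ to pull out the factor $2\cos\big(\tfrac{a\pi}{k}\big)$, and then invoke the computation \eqref{2-1} (Theorem \ref{2-T1}) with modulus $n$ to make the bracket vanish. The only small slip is your aside that the case $n=1$ is trivial---there the bracket equals $1$ while $\cos\big(\tfrac{a\pi}{k}\big)\neq 0$, so the identity in fact requires $n\geq 3$, an implicit hypothesis the paper also leaves unstated.
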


\begin{proof}
By the double angle formula for sine and the addition formula for cosine, we can rewrite the sum in \eqref{2-2} as
\begin{align*}
&2\cos\Big(\frac{a\pi}{k}\Big)-2\sum_{j=1}^{\frac{n-1}{2}}(-1)^{j-1}
\left\{\cos\Big(\frac{(mj-a)\pi}{k}\Big)+\cos\Big(\frac{(mj+a)\pi}{k}\Big)\right\}\\
&=2\cos\Big(\frac{a\pi}{k}\Big)-4\sum_{j=1}^{\frac{n-1}{2}}(-1)^{j-1}\cos\Big(\frac{mj\pi}{k}\Big)\cos\Big(\frac{a\pi}{k}\Big)\\
&=2\cos\Big(\frac{a\pi}{k}\Big)\left\{1-2\sum_{j=1}^{\frac{n-1}{2}}(-1)^{j-1}\cos\Big(\frac{j\pi}{n}\Big)\right\}=0,
\end{align*}
where we used \eqref{2-1} for the last equality.
\end{proof}

\begin{corollary}[Theorem 4.1 in \cite{BKZ2}; Equation (1.7) in \cite{palestine}]
\begin{equation}
\df{\sin(2\pi/15)}{\sin(\pi/15)}-\df{\sin(7\pi/15)}{\sin(4\pi/15)}-\df{\sin(3\pi/15)}{\sin(6\pi/15)}=0.
\end{equation}
\end{corollary}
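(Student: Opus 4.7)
The plan is to obtain the corollary as a direct specialization of Theorem \ref{2-T2}, using the factorization $15 = 3 \cdot 5$ together with the reflection identity $\sin(\pi - x) = \sin(x)$.

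More precisely, I would set $k = 15$, $n = 3$, $m = 5$, and $a = 1$ in Theorem \ref{2-T2}. All hypotheses are satisfied: $n$ and $m$ are odd, $k = nm$, and $1 \leq a \leq (k-1)/2 = 7$. The outer sum in \eqref{2-2} then runs only over $j = 1$, since $(n-1)/2 = 1$, and the sign $(-1)^{j-1} = 1$ at $j=1$. Thus the theorem reduces to
\begin{equation*}
\frac{\sin(2\pi/15)}{\sin(\pi/15)} - \frac{\sin(8\pi/15)}{\sin(4\pi/15)} - \frac{\sin(12\pi/15)}{\sin(6\pi/15)} = 0.
\end{equation*}

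To match the statement of the corollary, I would apply $\sin(\pi - x) = \sin(x)$, which gives $\sin(8\pi/15) = \sin(7\pi/15)$ and $\sin(12\pi/15) = \sin(3\pi/15)$. Substituting these yields precisely
\begin{equation*}
\frac{\sin(2\pi/15)}{\sin(\pi/15)} - \frac{\sin(7\pi/15)}{\sin(4\pi/15)} - \frac{\sin(3\pi/15)}{\sin(6\pi/15)} = 0.
\end{equation*}

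There is no real obstacle here; the proof is essentially a matter of identifying the correct specialization. The only mild point to verify is that the identification of the arguments $mj \pm a$ with the numerators appearing in the corollary is internally consistent, i.e.\ that the reflection formula is the only adjustment required to pass from the raw output of Theorem \ref{2-T2} to the displayed identity. Once this is observed the proof is complete in one line, and it would simply read: \emph{Set $k=15$, $n=3$, $m=5$, $a=1$ in Theorem \textup{\ref{2-T2}} and apply $\sin(\pi-x)=\sin(x)$.}
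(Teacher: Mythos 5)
Your proposal is correct and is exactly the paper's proof: the authors simply set $k=15$, $a=1$, $n=3$, $m=5$ in Theorem \ref{2-T2}, with the reflection $\sin(\pi-x)=\sin(x)$ (turning $\sin(8\pi/15)$ and $\sin(12\pi/15)$ into $\sin(7\pi/15)$ and $\sin(3\pi/15)$) left implicit. Your write-up just makes that last cosmetic step explicit.
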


\begin{proof} Let $k=15,$ $a=1,$ $n=3$ and $m=5$ in Theorem \ref{2-T2}.
\end{proof}

\begin{theorem}\label{2-T3}
Let $k$ be a positive integer such that $k\equiv 1 \pmod{4}.$ Let $a_j$ and  $b_j,$ with $1\leq j\leq \frac{k-1}{4},$ be
integers such that $\{a_j\pm b_j, k-(a_j\pm b_j)\}_{j}=\{1, 2, \dots, k-1\}.$ Then,
\begin{equation}
\sum_{j=1}^{\frac{k-1}{4}}(-1)^{a_j+b_j}\frac{\sin\big(\frac{2a_j\pi}{k}\big)\sin\big(\frac{2b_j\pi}{k}\big)}
{\sin\big(\frac{a_j\pi}{k}\big)\sin\big(\frac{b_j\pi}{k}\big)}=-1.
\end{equation}
\end{theorem}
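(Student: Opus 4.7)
The plan is to reduce the given sum to an elementary geometric series by successive applications of trigonometric identities, in the spirit of the proofs of Theorems \ref{2-T1} and \ref{2-T2}. First I would apply the double-angle identity $\sin(2x)=2\sin x\cos x$ to each quotient, giving $\sin(2a_j\pi/k)/\sin(a_j\pi/k)=2\cos(a_j\pi/k)$ and similarly for $b_j$, so that the $j$th summand equals $4(-1)^{a_j+b_j}\cos(a_j\pi/k)\cos(b_j\pi/k)$. The product-to-sum formula then rewrites this as
\[
2(-1)^{a_j+b_j}\cos\!\left(\frac{(a_j+b_j)\pi}{k}\right) + 2(-1)^{a_j-b_j}\cos\!\left(\frac{(a_j-b_j)\pi}{k}\right),
\]
where I use that $(a_j+b_j)-(a_j-b_j)=2b_j$ is even, so that $(-1)^{a_j+b_j}=(-1)^{a_j-b_j}$. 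Setting $A:=\{a_j+b_j,\,a_j-b_j : 1\leq j\leq (k-1)/4\}$, the entire sum becomes $2\sum_{n\in A}(-1)^n\cos(n\pi/k)$.

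Second, I would invoke the hypothesis that $A$ and $\{k-n:n\in A\}$ together partition $\{1,2,\dots,k-1\}$. Since $k$ is odd, $\cos((k-n)\pi/k)=-\cos(n\pi/k)$ and $(-1)^{k-n}=-(-1)^n$, so $(-1)^{k-n}\cos((k-n)\pi/k)=(-1)^n\cos(n\pi/k)$. Consequently
\[
2\sum_{n\in A}(-1)^n\cos\!\left(\frac{n\pi}{k}\right) = \sum_{n=1}^{k-1}(-1)^n\cos\!\left(\frac{n\pi}{k}\right),
\]
which is precisely the original sum.

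Finally, I would evaluate this last sum as the real part of $\sum_{n=1}^{k-1}z^n$ with $z=-e^{i\pi/k}$. Because $k$ is odd, $z^k=(-1)^k e^{i\pi}=1$, whence
\[
\sum_{n=1}^{k-1}z^n = \frac{z^k-z}{z-1} = \frac{1-z}{z-1}=-1.
\]
Taking real parts completes the proof.

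I do not expect a serious obstacle: the argument is essentially bookkeeping once the double-angle and product-to-sum steps are in place. The slightly delicate point is the second step, where one must verify that the four values $a_j\pm b_j$ and $k-(a_j\pm b_j)$ really are distinct for each $j$ (which follows from $k$ being odd and $a_j\neq\pm b_j$), so that the hypothesis forces a genuine partition of $\{1,\dots,k-1\}$ rather than a mere covering. The congruence $k\equiv 1\pmod 4$ enters only through the integrality of $(k-1)/4$; it is the oddness of $k$ that drives the crucial $n\leftrightarrow k-n$ symmetry and the identity $z^k=1$.
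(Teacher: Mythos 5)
Your proposal is correct and follows essentially the same route as the paper: double-angle plus product-to-sum formulas convert the sum to $2\sum_{n\in A}(-1)^n\cos(n\pi/k)$ with $A=\{a_j\pm b_j\}$, the symmetry $(-1)^{k-n}\cos\bigl((k-n)\pi/k\bigr)=(-1)^n\cos(n\pi/k)$ (valid since $k$ is odd) together with the partition hypothesis turns this into $\sum_{n=1}^{k-1}(-1)^n\cos(n\pi/k)$, and that alternating cosine sum equals $-1$. The only cosmetic difference is that you evaluate the final sum directly as a geometric series with $z=-e^{i\pi/k}$, whereas the paper cites its earlier computation \eqref{2-1} from Theorem \ref{2-T1}, which is the same calculation.
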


\begin{proof}  Using the double angle formula for sine and the addition formula for cosine, we deduce that
\begin{align}\label{2-3}
&\sum_{j=1}^{\frac{k-1}{4}}(-1)^{a_j+b_j}\frac{\sin\big(\frac{2a_j\pi}{k}\big)\sin\big(\frac{2b_j\pi}{k}\big)}
{\sin\big(\frac{a_j\pi}{k}\big)\sin\big(\frac{b_j\pi}{k}\big)}=
4\sum_{j=1}^{\frac{k-1}{4}}(-1)^{a_j+b_j}\cos\Big(\frac{a_j\pi}{k}\Big)\cos\Big(\frac{b_j\pi}{k}\Big)\notag\\
=&2\sum_{j=1}^{\frac{k-1}{4}}(-1)^{a_j+b_j}\left\{\cos\Big(\frac{(a_j-b_j)\pi}{k}\Big)+\cos\Big(\frac{(a_j+b_j)\pi}{k}\Big)\right\}\notag\\
=&\sum_{j=1}^{\frac{k-1}{4}}(-1)^{a_j+b_j}\left\{\cos\Big(\frac{(a_j-b_j)\pi}{k}\Big)
+\cos\Big(\frac{(a_j+b_j)\pi}{k}\Big)\right\}\notag\\
&+\sum_{j=1}^{\frac{k-1}{4}}(-1)^{k-(a_j+b_j)}\left\{\cos\Big(\frac{\big(k-(a_j-b_j)\big)\pi}{k}\Big)+\cos\Big(\frac{\big(k-(a_j+b_j)\big)\pi}{k}\Big)\right\}.
\end{align}
With the condition $\{a_j\pm b_j, k-(a_j\pm b_j)\}_{j}=\{1, 2, \dots, k-1\}$ and \eqref{2-1}, we see that the four sums on the right-hand side of \eqref{2-3} can be amalgamated into 
\begin{align*}
\sum_{j=1}^{k-1}(-1)^{j}\cos\Big(\frac{j\pi}{k}\Big)=-1.
 \end{align*}
\end{proof}

\begin{corollary}[Theorem 4.3 in \cite{BKZ2}; Equation (1.9) in \cite{palestine}]
\begin{align*}
&\df{\sin(6\pi/17)\sin(7\pi/17)}{\sin(3\pi/17)\sin(5\pi/17)}+\df{\sin(4\pi/17)\sin(\pi/17)}{\sin(2\pi/17)\sin(8\pi/17)}\notag\\
&\qquad -\df{\sin(8\pi/17)\sin(2\pi/17)}{\sin(4\pi/17)\sin(\pi/17)}-\df{\sin(5\pi/17)\sin(3\pi/17)}{\sin(6\pi/17)\sin(7\pi/17)}=-1.
\end{align*}
\end{corollary}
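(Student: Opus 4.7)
The plan is to deduce the corollary as a direct specialization of Theorem \ref{2-T3} at $k = 17$. Since $(k-1)/4 = 4$, I need to exhibit four pairs $(a_j, b_j)$ with $1 \leq a_j, b_j \leq 8$ such that the four terms $\sin(2a_j\pi/17)\sin(2b_j\pi/17)/(\sin(a_j\pi/17)\sin(b_j\pi/17))$ reproduce those printed on the left, and such that the partition hypothesis
\begin{equation*}
\{a_j \pm b_j,\; 17 - (a_j \pm b_j)\}_{j=1}^{4} = \{1, 2, \ldots, 16\}
\end{equation*}
(interpreted modulo the cosine identities) is satisfied.

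First I would read off each printed term, using the reflection $\sin((17-m)\pi/17) = \sin(m\pi/17)$ to recognize factors whose arguments exceed $\pi/2$; for instance, $\sin(7\pi/17)$ in the first term is $\sin(10\pi/17) = \sin(2 \cdot 5 \cdot \pi/17)$, forcing $b_1 = 5$. Carrying this out across all four terms suggests the choices
\begin{equation*}
(a_1, b_1) = (3,5), \quad (a_2, b_2) = (2,8), \quad (a_3, b_3) = (4,1), \quad (a_4, b_4) = (6,7),
\end{equation*}
and a quick parity check shows $(-1)^{a_j+b_j} = +, +, -, -$, matching the signs in the statement.

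Next I would verify the partition hypothesis by tabulating $a_j + b_j$, $|a_j - b_j|$, and the complements $17 - (a_j \pm b_j)$ for each $j$, folding any representative outside $\{1, \ldots, 16\}$ back inside via $\cos((17+n)\pi/17) = \cos((17-n)\pi/17)$ (needed whenever $a_j < b_j$, so that $17-(a_j-b_j) > k$). This produces the four disjoint quadruples $\{2, 8, 9, 15\}$, $\{6, 7, 10, 11\}$, $\{3, 5, 12, 14\}$, and $\{1, 4, 13, 16\}$, whose union is exactly $\{1, 2, \ldots, 16\}$. Theorem \ref{2-T3} then immediately delivers the value $-1$.

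The only genuine obstacle is the bookkeeping: correctly identifying $(a_j, b_j)$ when $b_j \geq 5$ (so that $2b_j > 8$ and the printed numerator appears in the reflected form $\sin((17 - 2b_j)\pi/17)$), and correctly folding $17 - (a_j - b_j)$ into $\{1,\ldots,16\}$ when $a_j < b_j$. Once the four pairs above are in hand and the partition is checked, no further calculation is required.
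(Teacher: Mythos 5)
Your proposal is correct and is essentially the paper's own proof: the paper simply specializes Theorem \ref{2-T3} with $k=17$ and the pairs $(a_1,b_1)=(5,3)$, $(a_2,b_2)=(8,2)$, $(a_3,b_3)=(4,1)$, $(a_4,b_4)=(7,6)$, which are your pairs with the larger entry listed first. Ordering each pair so that $a_j>b_j$ makes the hypothesis $\{a_j\pm b_j,\,17-(a_j\pm b_j)\}_j=\{1,\dots,16\}$ hold literally, so the folding via $\cos((17+n)\pi/17)=\cos((17-n)\pi/17)$ that you invoke for the cases $a_j<b_j$ becomes unnecessary.
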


\begin{proof} Let $k=17,$ and set $(a_1, b_1)=(5, 3),$ $(a_2, b_2)=(8, 2),$ $(a_3, b_3)=(4, 1),$ and $(a_4, b_4)=(7, 6)$
in Theorem \ref{2-T3}.
\end{proof}

\begin{corollary}[Theorem 4.5 in \cite{BKZ2}; Equation (1.4) in \cite{palestine}]
\begin{equation*}
\df{\sin(4\pi/13)\sin(6\pi/13)}{\sin(2\pi/13)\sin(3\pi/13)}-\df{\sin(2\pi/13)\sin(3\pi/13)}{\sin(\pi/13)\sin(5\pi/13)}
-\df{\sin(5\pi/13)\sin(\pi/13)}{\sin(4\pi/13)\sin(6\pi/13)}=1.
\end{equation*}
\end{corollary}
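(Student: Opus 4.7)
The plan is to apply Theorem \ref{2-T3} with $k = 13$. Since $13 \equiv 1 \pmod 4$ and $(k-1)/4 = 3$, three pairs $(a_j, b_j)$ are needed. I would take $(a_1, b_1) = (3, 2)$, $(a_2, b_2) = (5, 1)$, $(a_3, b_3) = (6, 4)$, and first verify the covering hypothesis $\{a_j \pm b_j,\, 13 - (a_j \pm b_j)\}_{j=1}^{3} = \{1, 2, \dots, 12\}$; the three fourfold sets are $\{1,5,8,12\}$, $\{4,6,7,9\}$, $\{2,3,10,11\}$, which do partition $\{1,\dots,12\}$.

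Next, I would recast each quotient in the stated identity into the form $4\cos(a\pi/13)\cos(b\pi/13)$ by combining the double-angle formula $\sin(2x) = 2\sin x \cos x$ with the reflection $\sin(\pi - x) = \sin x$. The first quotient directly gives $4\cos(2\pi/13)\cos(3\pi/13)$; for the second, writing $\sin(3\pi/13) = \sin(10\pi/13) = 2\sin(5\pi/13)\cos(5\pi/13)$ produces $4\cos(\pi/13)\cos(5\pi/13)$; and for the third, $\sin(5\pi/13) = \sin(8\pi/13) = 2\sin(4\pi/13)\cos(4\pi/13)$ together with $\sin(\pi/13) = \sin(12\pi/13) = 2\sin(6\pi/13)\cos(6\pi/13)$ yields $4\cos(4\pi/13)\cos(6\pi/13)$.

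Accordingly, the left-hand side of the corollary becomes
\[
4\bigl[\cos(2\pi/13)\cos(3\pi/13) - \cos(\pi/13)\cos(5\pi/13) - \cos(4\pi/13)\cos(6\pi/13)\bigr].
\]
For the three chosen pairs the sums $a_j + b_j$ are $5, 6, 10$, so Theorem \ref{2-T3} gives
\[
-4\cos(2\pi/13)\cos(3\pi/13) + 4\cos(\pi/13)\cos(5\pi/13) + 4\cos(4\pi/13)\cos(6\pi/13) = -1,
\]
and negating both sides recovers the corollary. The only step that requires genuine care is the third quotient, where the reflection identity is needed to bring the ratios into the standard cosine-product form; once that is done, the argument reduces to parity bookkeeping and I expect no serious obstacle.
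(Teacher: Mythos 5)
Your proof is correct and matches the paper's: the paper's proof is exactly the specialization of Theorem \ref{2-T3} to $k=13$ with the pairs $(a_1,b_1)=(3,2)$, $(a_2,b_2)=(5,1)$, $(a_3,b_3)=(6,4)$, which is what you do. Your extra steps—checking the partition $\{1,5,8,12\}\cup\{4,6,7,9\}\cup\{2,3,10,11\}=\{1,\dots,12\}$ and converting each quotient to $4\cos(a\pi/13)\cos(b\pi/13)$ via the double-angle and reflection identities—are just the bookkeeping the paper leaves implicit, and they are carried out correctly.
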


\begin{proof}
Let $k=13,$ and set $(a_1, b_1)=(3, 2),$ $(a_2, b_2)=(5, 1),$ and $(a_3, b_3)=(6, 4)$ in Theorem \ref{2-T3}.
\end{proof}

\section{Sums of Sines from Real Quadratic Fields}\label{section5}
We consider a real quadratic field $\Q(\sqrt{d}),$ where $d$ is a positive discriminant. We denote by $h$ and $\e$ the class number and the fundamental unit of $\Q(\sqrt{d}),$ respectively. Then, the class number formula of Dirichlet can be given
by
\begin{align}\label{D}
\e^{2h}=\prod_{0<j<d}\Big[\sin\Big(\frac{j\pi}{d}\Big)\Big]^{-(\frac{j}{d})},
\end{align}
where $(\frac{j}{d})$ is the Kronecker symbol \cite[Theorem 3, p.~ 246]{bs}, \cite[Equation (1), p.~2]{ckk}. From \eqref{D}, we can easily derive the following identity.

\begin{theorem} Let $p$ be a prime with $p\equiv 1 \pmod4$ and let $\e$ be the fundamental unit of $\Q(\sqrt{p}).$ Then,
\begin{equation}
\df{\prod_{0<n<p/2}\sin\big(\frac{n\pi}{p}\big)}{\prod_{0<r<p/2}\sin\big(\frac{r\pi}{p}\big)}
\pm\df{\prod_{0<r<p/2}\sin\big(\frac{n\pi}{p}\big)}{\prod_{0<n<p/2}\sin\big(\frac{r\pi}{p}\big)}
=\e^h\pm\e^{-h},
\end{equation}
where $n$ and $r$ run over the quadratic non-residues and residues of p, respectively, and $h$ is the class number of
$\Q(\sqrt{p}).$
\end{theorem}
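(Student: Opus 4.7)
The plan is to reduce the claim to the single identity
\[
X := \frac{\prod_{0<n<p/2}\sin(n\pi/p)}{\prod_{0<r<p/2}\sin(r\pi/p)} = \e^h,
\]
with $n$ (resp.\ $r$) ranging over the quadratic non-residues (resp.\ residues) of $p$. Once this is established, the theorem follows immediately from the algebraic triviality $X \pm X^{-1} = \e^h \pm \e^{-h}$, which matches the two statements encoded by the $\pm$ sign in the formula.

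To obtain $X = \e^h$, I would start from the Dirichlet class number formula \eqref{D} applied with $d=p$, giving
\[
\e^{2h} = \prod_{0<j<p}\bigl[\sin(j\pi/p)\bigr]^{-(j/p)},
\]
where, since $p$ is prime, the Kronecker symbol reduces to the Legendre symbol. The key simplification is that, because $p\equiv 1\pmod 4$, the element $-1$ is a quadratic residue modulo $p$, so $(\tfrac{p-j}{p}) = (\tfrac{-1}{p})(\tfrac{j}{p}) = (\tfrac{j}{p})$. Combined with $\sin((p-j)\pi/p) = \sin(j\pi/p)$, this shows that the factors indexed by $j$ and $p-j$ coincide. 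Hence the product over $0<j<p$ collapses to the square of the product over $0<j<p/2$:
\[
\e^{2h} = \left(\prod_{0<j<p/2}\bigl[\sin(j\pi/p)\bigr]^{-(j/p)}\right)^{\!2}.
\]
Splitting the inner product according to whether $(j/p) = +1$ or $-1$ identifies the right-hand side with $X^2$.

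The only step requiring genuine care is the extraction of the square root: one must verify that both $\e^h$ and the product $X$ are positive, so that the positive square root is the correct one. This is immediate, since $\e>1$ gives $\e^h>0$, and each sine $\sin(j\pi/p)$ for $0<j<p$ is strictly positive, whence $X>0$. I expect no other real obstacles; the proof is essentially a bookkeeping exercise on the Dirichlet formula combined with the residue-symmetry provided by $p\equiv 1\pmod 4$.
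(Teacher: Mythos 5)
Your proposal is correct and follows exactly the route the paper intends: the paper simply asserts the theorem is "easily derived" from the Dirichlet formula \eqref{D}, and your write-up supplies precisely that derivation—folding the product over $0<j<p$ onto $0<j<p/2$ using $p\equiv 1\pmod 4$ (so $\bigl(\tfrac{-1}{p}\bigr)=1$), splitting by residue class to get $X^2=\e^{2h}$, and resolving the square root by positivity so that $X=\e^h$ (which is needed for the minus-sign case). No gaps.
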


The fundamental unit of $\Q(\sqrt{13})$ is $(3+\sqrt{13})/2$ and the class number $h=1$.
Hence, we readily derive the following corollaries; see also \cite[Theorem 4.7]{BKZ2}.

\begin{corollary}[Theorem 4.7 in \cite{BKZ1}]
\begin{align*}
\df{\sin(6\pi/13)\sin(2\pi/13)\sin(5\pi/13)}{\sin(4\pi/13)\sin(3\pi/13)\sin(\pi/13)}-
\df{\sin(4\pi/13)\sin(3\pi/13)\sin(\pi/13)}{\sin(6\pi/13)\sin(2\pi/13)\sin(5\pi/13)}=3.
\end{align*}
\end{corollary}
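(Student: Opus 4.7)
The plan is to specialize the preceding theorem to $p=13$, choosing the minus sign on both sides of the identity. The corollary is essentially a bookkeeping consequence of that theorem once one identifies the quadratic residues/non-residues and computes the right-hand side $\e^h - \e^{-h}$ for $\Q(\sqrt{13})$.

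First I would list the quadratic residues modulo $13$. Squaring $1,2,\ldots,6$ yields $1,4,9,3,12,10$, so the set of quadratic residues mod $13$ is $\{1,3,4,9,10,12\}$ and the set of non-residues is $\{2,5,6,7,8,11\}$. Intersecting with $(0, 13/2)$, I get the residues $\{1,3,4\}$ and the non-residues $\{2,5,6\}$. Plugging these into the left-hand side of the preceding theorem gives
\[
\frac{\sin(2\pi/13)\sin(5\pi/13)\sin(6\pi/13)}{\sin(\pi/13)\sin(3\pi/13)\sin(4\pi/13)} - \frac{\sin(\pi/13)\sin(3\pi/13)\sin(4\pi/13)}{\sin(2\pi/13)\sin(5\pi/13)\sin(6\pi/13)},
\]
which is exactly the expression in the corollary.

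Next I would invoke the standard facts that the class number of $\Q(\sqrt{13})$ is $h=1$ and its fundamental unit is $\e = (3+\sqrt{13})/2$, as already recalled in the excerpt. Since the norm $N(\e) = (9-13)/4 = -1$, the inverse is $\e^{-1} = (\sqrt{13}-3)/2$, and hence
\[
\e^h - \e^{-h} = \e - \e^{-1} = \frac{3+\sqrt{13}}{2} - \frac{\sqrt{13}-3}{2} = 3.
\]
Equating the two sides completes the proof.

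There is no real obstacle here: the argument is a direct substitution into the previously established theorem, and the only genuine computations are determining the quadratic residue pattern modulo $13$ and evaluating $\e - \e^{-1}$. The mildly delicate point is making sure to pair the minus sign on the left with the minus sign on the right and to use the correct representative of the fundamental unit (so that the norm is $-1$ and the subtraction yields the clean integer $3$).
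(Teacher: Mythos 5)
Your proof is correct and follows essentially the same route as the paper: specialize the class-number-formula theorem to $p=13$ with the minus sign, use that the quadratic residues in $(0,13/2)$ are $\{1,3,4\}$ and the non-residues are $\{2,5,6\}$, and compute $\e-\e^{-1}=3$ from $\e=(3+\sqrt{13})/2$, $h=1$, $N(\e)=-1$. The paper states this as an immediate consequence without spelling out the residue bookkeeping, which you have verified correctly.
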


\begin{corollary}
\begin{align*}
\df{\sin(6\pi/13)\sin(2\pi/13)\sin(5\pi/13)}{\sin(4\pi/13)\sin(3\pi/13)\sin(\pi/13)}+
\df{\sin(4\pi/13)\sin(3\pi/13)\sin(\pi/13)}{\sin(6\pi/13)\sin(2\pi/13)\sin(5\pi/13)}=\sqrt{13}.
\end{align*}
\end{corollary}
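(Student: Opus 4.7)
The plan is to apply the immediately preceding theorem (the one giving $\epsilon^h\pm\epsilon^{-h}$) with the prime $p=13$ and the ``$+$'' sign. This reduces the problem to two routine verifications: (i) identifying which integers in $\{1,2,\dots,6\}$ are quadratic residues modulo $13$, and (ii) computing $\epsilon+\epsilon^{-1}$ for the fundamental unit $\epsilon=(3+\sqrt{13})/2$ of $\Q(\sqrt{13})$, recalling that the class number is $h=1$.

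For step (i), I would compute the squares modulo $13$: $1^2\equiv 1$, $2^2\equiv 4$, $3^2\equiv 9$, $4^2\equiv 3$, $5^2\equiv 12$, $6^2\equiv 10$. Intersecting with $\{1,2,\dots,6\}$, the quadratic residues are $\{1,3,4\}$ and the non-residues are $\{2,5,6\}$. Hence the products $\prod_{0<r<p/2}\sin(r\pi/p)$ and $\prod_{0<n<p/2}\sin(n\pi/p)$ in the theorem become exactly $\sin(\pi/13)\sin(3\pi/13)\sin(4\pi/13)$ and $\sin(2\pi/13)\sin(5\pi/13)\sin(6\pi/13)$ respectively. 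The left-hand side of the theorem with the $+$ sign therefore matches the left-hand side of the corollary verbatim. (This is the same identification already used in the preceding corollary for the ``$-$'' case, so no new computation is needed here.)

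For step (ii), I would rationalize:
\begin{equation*}
\epsilon^{-1}=\frac{2}{3+\sqrt{13}}=\frac{2(3-\sqrt{13})}{9-13}=\frac{\sqrt{13}-3}{2},
\end{equation*}
so that
\begin{equation*}
\epsilon+\epsilon^{-1}=\frac{3+\sqrt{13}}{2}+\frac{\sqrt{13}-3}{2}=\sqrt{13},
\end{equation*}
which is the required right-hand side. Since $h=1$ we have $\epsilon^h\pm\epsilon^{-h}=\epsilon\pm\epsilon^{-1}$, and the ``$+$'' case of the theorem yields the claim.

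There is essentially no obstacle here: the work is entirely bookkeeping. The only mildly delicate point is making sure the convention for quadratic residues versus non-residues matches the $\pm$ sign coming from the Kronecker symbol $(\tfrac{j}{d})$ in \eqref{D}; this is already settled by the fact that the preceding corollary (difference of the same two ratios equals $3=\epsilon-\epsilon^{-1}$) comes out consistently with the assignment above, so the ``$+$'' case immediately gives $\sqrt{13}$.
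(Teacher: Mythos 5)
Your proposal is correct and follows the paper's own route exactly: specialize the preceding theorem to $p=13$ with the $+$ sign, use the quadratic residues $\{1,3,4\}$ and non-residues $\{2,5,6\}$ modulo $13$ to identify the two sine products, and compute $\epsilon+\epsilon^{-1}=\tfrac{3+\sqrt{13}}{2}+\tfrac{\sqrt{13}-3}{2}=\sqrt{13}$ with $h=1$. The paper simply states this is "readily derived," and your bookkeeping fills in precisely those details.
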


In a similar manner, the fundamental unit of $\Q(\sqrt{17})$ is $4+\sqrt{17}$ and the class number $h=1;$
thus we deduce the following examples.

\begin{corollary}
\begin{align*}
\df{\sin\big(\frac{3\pi}{17}\big)\sin\big(\frac{5\pi}{17}\big)\sin\big(\frac{6\pi}{17}\big)\sin\big(\frac{7\pi}{17}\big)}
{\sin\big(\frac{\pi}{17}\big)\sin\big(\frac{2\pi}{17}\big)\sin\big(\frac{4\pi}{17}\big)\sin\big(\frac{8\pi}{17}\big)}-
\df{\sin\big(\frac{\pi}{17}\big)\sin\big(\frac{2\pi}{17}\big)\sin\big(\frac{4\pi}{17}\big)\sin\big(\frac{8\pi}{17}\big)}
{\sin\big(\frac{3\pi}{17}\big)\sin\big(\frac{5\pi}{17}\big)\sin\big(\frac{6\pi}{17}\big)\sin\big(\frac{7\pi}{17}\big)}=8,
\end{align*}
and
\begin{align*}
\df{\sin\big(\frac{3\pi}{17}\big)\sin\big(\frac{5\pi}{17}\big)\sin\big(\frac{6\pi}{17}\big)\sin\big(\frac{7\pi}{17}\big)}
{\sin\big(\frac{\pi}{17}\big)\sin\big(\frac{2\pi}{17}\big)\sin\big(\frac{4\pi}{17}\big)\sin\big(\frac{8\pi}{17}\big)}+
\df{\sin\big(\frac{\pi}{17}\big)\sin\big(\frac{2\pi}{17}\big)\sin\big(\frac{4\pi}{17}\big)\sin\big(\frac{8\pi}{17}\big)}
{\sin\big(\frac{3\pi}{17}\big)\sin\big(\frac{5\pi}{17}\big)\sin\big(\frac{6\pi}{17}\big)\sin\big(\frac{7\pi}{17}\big)}=2\sqrt{17}.
\end{align*}
\end{corollary}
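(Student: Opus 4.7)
The plan is to apply the preceding theorem directly to $p=17$, which is a prime with $17 \equiv 1 \pmod 4$. Three ingredients are needed: the quadratic residues and non-residues of $17$ in the range $\{1,2,\ldots,8\}$, the fundamental unit $\e$ of $\Q(\sqrt{17})$, and the class number $h$.

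First I would compute $j^2 \bmod 17$ for $j=1,\ldots,8$, obtaining $1,4,9,16,8,2,15,13$. Folding each value via $j \mapsto \min(j,17-j)$ (which preserves the quadratic character, since $17 \equiv 1\pmod 4$ forces $\big(\tfrac{-1}{17}\big) = 1$) identifies $\{1,2,4,8\}$ as the quadratic residues of $17$ lying in $\{1,\ldots,8\}$ and $\{3,5,6,7\}$ as the corresponding non-residues. These are precisely the indices appearing in the denominators and numerators, respectively, of both ratios in the corollary, so the left-hand sides there are the quantities $\frac{N}{R}\pm\frac{R}{N}$ supplied by the preceding theorem.

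Next, I would invoke the standard facts that the fundamental unit of $\Q(\sqrt{17})$ is $\e = 4 + \sqrt{17}$ and that the class number is $h=1$. From $(4+\sqrt{17})(\sqrt{17}-4) = 17-16 = 1$, one gets $\e^{-1} = \sqrt{17}-4$, and hence
\begin{equation*}
\e^h - \e^{-h} = 8, \qquad \e^h + \e^{-h} = 2\sqrt{17}.
\end{equation*}
Substituting into the preceding theorem with the minus sign yields the first identity and with the plus sign yields the second.

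There is no substantive obstacle; the corollary is a direct specialization of the theorem. The only points requiring care are the correct bookkeeping of quadratic residues modulo $17$ and the correct value of the fundamental unit, both of which are routine.
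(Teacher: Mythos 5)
Your proposal is correct and matches the paper's proof, which likewise just specializes the preceding theorem to $p=17$ using the fundamental unit $\e=4+\sqrt{17}$ and class number $h=1$, so that $\e^h\pm\e^{-h}$ gives $8$ and $2\sqrt{17}$. Your explicit identification of $\{1,2,4,8\}$ as the residues and $\{3,5,6,7\}$ as the non-residues modulo $17$ is the only bookkeeping the paper leaves implicit.
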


\section{Cotangent Sums}\label{section3}

\begin{lemma}[Lemmas 7.3 and 9.1 in \cite{BKZ1}] \label{3-L1}
If $k$ is a positive integer and $z_n=e^{2\pi inm/k}$ with $(m, k)=1,$ then
\begin{align*}
&\sum_{n=1}^{k-1}\frac{1}{z_n-1}=-\frac{k-1}{2}, \qquad  \qquad \qquad \quad   \sum_{n=1}^{k-1}\frac{z_n}{z_n-1}=\frac{k-1}{2} \\
&\sum_{n=1}^{k-1}\frac{1}{(z_n-1)^2}=-\frac{(k-1)(k-5)}{12}, \qquad
\sum_{n=1}^{k-1}\frac{z_n}{(z_n-1)^2}=-\frac{(k-1)(k+1)}{12} \\
&\sum_{n=1}^{k-1}\frac{1}{(z_n-1)^3}=\frac{(k-1)(k-3)}{8}.
\end{align*}
\end{lemma}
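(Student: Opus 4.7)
The plan is as follows. Since $\gcd(m,k)=1$, the map $n\mapsto nm\pmod{k}$ permutes $\{1,\ldots,k-1\}$, so $\{z_n\}_{n=1}^{k-1}$ coincides with the set of nontrivial $k$-th roots of unity $\{\zeta^j\}_{j=1}^{k-1}$, where $\zeta=e^{2\pi i/k}$. Thus each of the five sums is independent of $m$, and it suffices to evaluate
\begin{equation*}
S_s:=\sum_{j=1}^{k-1}\frac{1}{(\zeta^j-1)^s}\qquad (s=1,2,3).
\end{equation*}
The two sums with $z_n$ in the numerator will then follow at once from $z_n/(z_n-1)^s=1/(z_n-1)^{s-1}+1/(z_n-1)^s$ together with $\sum_{n=1}^{k-1}1=k-1$.

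My method is to compute $\log P(1+u)$ in two different ways, where
\begin{equation*}
P(z)=\frac{z^k-1}{z-1}=1+z+\cdots+z^{k-1}=\prod_{j=1}^{k-1}(z-\zeta^j),
\end{equation*}
and then to match coefficients of $u,u^2,u^3$. From the middle form the binomial theorem gives $P(1+u)=k(1+a_1u+a_2u^2+a_3u^3+\cdots)$ with $a_s=\binom{k}{s+1}/k$, so the Taylor series of $\log(1+x)$ yields
\begin{equation*}
\log P(1+u)-\log k = a_1u+\bigl(a_2-\tfrac{1}{2}a_1^2\bigr)u^2+\bigl(a_3-a_1a_2+\tfrac{1}{3}a_1^3\bigr)u^3+O(u^4).
\end{equation*}
From the product form, writing $z-\zeta^j=(1-\zeta^j)\bigl(1+u/(1-\zeta^j)\bigr)$ and expanding each logarithm, the coefficient of $u^s$ in $\log P(1+u)$ equals $-S_s/s$.

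Equating coefficients gives $S_1=-a_1$, $S_2=-2\bigl(a_2-\tfrac{1}{2}a_1^2\bigr)$, and $S_3=-3\bigl(a_3-a_1a_2+\tfrac{1}{3}a_1^3\bigr)$. Direct simplification then yields
\begin{equation*}
S_1=-\frac{k-1}{2},\qquad S_2=-\frac{(k-1)(k-5)}{12},\qquad S_3=\frac{(k-1)(k-3)}{8}.
\end{equation*}
Using $S_1,S_2$ one obtains $\sum_n z_n/(z_n-1)=(k-1)+S_1=(k-1)/2$ and $\sum_n z_n/(z_n-1)^2=S_1+S_2=-(k-1)(k+1)/12$, finishing all five evaluations.

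There is no conceptual obstacle: the whole argument reduces to a power series expansion around $z=1$. The only place needing mild care is the algebraic simplification for $S_3$, which hinges on the cancellation $(k-2)(k-3)-2(k-1)(k-2)+(k-1)^2=3-k$ inside the cubic coefficient, giving $a_3-a_1a_2+\tfrac{1}{3}a_1^3=-(k-1)(k-3)/24$. An equivalent route would be to compute $(\log P)^{(s)}(1)$ directly for $s=1,2,3$ using $P(1)=k$, $P'(1)=k(k-1)/2$, $P''(1)=k(k-1)(k-2)/3$, etc., but the bookkeeping is essentially the same.
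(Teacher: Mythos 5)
Your proof is correct: I checked the permutation argument (valid since $(m,k)=1$), the identification of the $u^s$-coefficient of $\log P(1+u)$ from the product side as $-S_s/s$ (the sign works out because $1/(1-\zeta^j)^s=(-1)^s/(\zeta^j-1)^s$), the values $a_1=(k-1)/2$, $a_2=(k-1)(k-2)/6$, $a_3=(k-1)(k-2)(k-3)/24$, the cancellation $(k-2)(k-3)-2(k-1)(k-2)+(k-1)^2=3-k$, and the two reductions $z_n/(z_n-1)^s=1/(z_n-1)^{s-1}+1/(z_n-1)^s$; all five stated evaluations follow. Note, however, that the present paper does not prove this lemma at all: it is imported verbatim from Lemmas 7.3 and 9.1 of \cite{BKZ1}, a paper whose methods are contour integration and related analytic devices. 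So there is nothing internal to compare against, but your argument is a genuinely self-contained and elementary alternative, resting only on the expansion of $\log\bigl((z^k-1)/(z-1)\bigr)$ about $z=1$; it is close in spirit to the logarithmic-derivative identities \eqref{xk1} and \eqref{xk2} that the paper itself uses elsewhere, and it has the advantage of producing all three sums $S_1,S_2,S_3$ (hence all five evaluations) from a single Taylor expansion rather than from separate residue computations.
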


\begin{lemma}[Lemmas 11.4 in \cite{BKZ1}]\label{3-L2}
Let $p$ and $q$ be relatively prime positive integers  such that $p, q \geq 2.$ If
$\omega_j=e^{2\pi ij/p}$ and $\xi_j=e^{2\pi ij/q},$ then
\begin{align*}
q\sum_{j=1}^{p-1}\frac{1}{(\o_j-1)(\o_j^q-1)}+p\sum_{j=1}^{q-1}\frac{1}{(\xi_j-1)(\xi_j^p-1)}
=-\frac{1}{12}(p^2+q^2-9pq+3p+3q+1).
\end{align*}
\end{lemma}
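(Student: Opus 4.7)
My plan is to apply the residue theorem to the rational function
\begin{equation*}
H(z)=\frac{1}{(z-1)(z^p-1)(z^q-1)}.
\end{equation*}
Its finite poles are $z=1$ (a triple pole, since all three factors vanish there) together with the nontrivial $p$th and $q$th roots of unity; the latter are simple poles because $\gcd(p,q)=1$ ensures that no $\omega_j$ coincides with any $\xi_i$ for $i,j\geq1$. Since $H(z)=O(z^{-p-q-1})$ as $|z|\to\infty$, the residue at infinity vanishes, so the sum of all finite residues equals $0$.

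At the simple pole $z=\omega_j$, differentiating $z^p-1$ gives the residue $\frac{1}{p\omega_j^{p-1}(\omega_j-1)(\omega_j^q-1)}=\frac{\omega_j}{p(\omega_j-1)(\omega_j^q-1)}$, using $\omega_j^p=1$; analogously, the residue at $\xi_j$ is $\frac{\xi_j}{q(\xi_j-1)(\xi_j^p-1)}$. For the triple pole at $z=1$, I set $w=z-1$ and use the standard Laurent expansion
\begin{equation*}
\frac{1}{z^m-1}=\frac{1}{mw}-\frac{m-1}{2m}+\frac{m^2-1}{12m}\,w+O(w^2)
\end{equation*}
for $m\in\{p,q\}$. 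Multiplying the two expansions, then by $1/w$, and extracting the coefficient of $w^{-1}$ yields the residue $\frac{p^2+q^2+3pq-3p-3q+1}{12pq}$ at $z=1$.

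Setting the sum of all residues equal to zero and multiplying through by $pq$, I obtain
\begin{equation*}
q\sum_{j=1}^{p-1}\frac{\omega_j}{(\omega_j-1)(\omega_j^q-1)}+p\sum_{j=1}^{q-1}\frac{\xi_j}{(\xi_j-1)(\xi_j^p-1)}=-\frac{p^2+q^2+3pq-3p-3q+1}{12}.
\end{equation*}
To replace the numerators $\omega_j$ and $\xi_j$ by $1$, I invoke $\frac{\omega}{\omega-1}=1+\frac{1}{\omega-1}$, which writes each summand as the desired term plus an extra $\frac{1}{\omega_j^q-1}$ (respectively $\frac{1}{\xi_j^p-1}$). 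Because $\gcd(p,q)=1$, the map $j\mapsto jq\pmod p$ permutes $\{1,\dots,p-1\}$, so the first identity in Lemma \ref{3-L1} gives $\sum_{j=1}^{p-1}\frac{1}{\omega_j^q-1}=-(p-1)/2$ and likewise $\sum_{j=1}^{q-1}\frac{1}{\xi_j^p-1}=-(q-1)/2$. A short arithmetic simplification converts the right-hand side into $-\frac{1}{12}(p^2+q^2-9pq+3p+3q+1)$, as claimed.

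The hard part will be the Laurent expansion at $z=1$: each factor $1/(z^m-1)$ must be carried out to the $O(w)$ term, and the resulting product simplified without error, since a slip there propagates directly into an incorrect polynomial in $p$ and $q$. The remaining steps---the simple-pole residues, the shift via $\omega/(\omega-1)=1+1/(\omega-1)$, and the two applications of Lemma \ref{3-L1}---are routine bookkeeping.
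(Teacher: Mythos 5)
Your argument is correct: the function $H(z)=\frac{1}{(z-1)(z^p-1)(z^q-1)}$ has exactly the poles you list, the residue at infinity vanishes, the simple-pole residues are $\frac{\omega_j}{p(\omega_j-1)(\omega_j^q-1)}$ and $\frac{\xi_j}{q(\xi_j-1)(\xi_j^p-1)}$, and your Laurent data $\frac{1}{z^m-1}=\frac{1}{mw}-\frac{m-1}{2m}+\frac{m^2-1}{12m}w+O(w^2)$ gives the residue $\frac{p^2+q^2+3pq-3p-3q+1}{12pq}$ at $z=1$ (I checked this, e.g.\ for $p=2$, $q=3$, where both sides of the lemma equal $\frac{25}{12}$). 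The passage from the numerators $\omega_j,\xi_j$ to $1$ via $\frac{\omega}{\omega-1}=1+\frac{1}{\omega-1}$ and the first identity of Lemma \ref{3-L1} (which, as stated with $z_n=e^{2\pi inm/k}$, $(m,k)=1$, applies directly to $\sum_j\frac{1}{\omega_j^q-1}$) is also sound, and the final arithmetic does yield $-\frac{1}{12}(p^2+q^2-9pq+3p+3q+1)$. Note, however, that the present paper does not prove this lemma at all: it is imported verbatim from \cite{BKZ1} (Lemma 11.4 there), so there is no in-paper proof to compare against. Your residue-theorem derivation is in the spirit of the contour-integration methods attributed to \cite{BKZ1}, and it has the merit of being short and self-contained; an alternative, purely algebraic route in the style of this paper would be to mimic the partial-fraction manipulations used in the proof of Theorem \ref{3-T1} (via the identities \eqref{xk1}--\eqref{xk2} and Lemma \ref{3-L1}), but your approach is perfectly adequate and arguably cleaner for this two-sum reciprocity.
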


\begin{theorem}\label{3-T1}
Let $p, q$ and $\mu$ be positive integers with $(p, q)=(q, \mu)=(p, \mu)=1.$
If $\o_n=e^{2\pi in/p}$, $\xi_n=e^{2\pi in/q}$ and $\nu_n=e^{2\pi in/\mu},$ then
\begin{align}\label{3-1}
&p\mu\sum_{n=1}^{q-1}\frac{1}{(\xi_n^p-1)(\xi_n^{\mu}-1)}+q\mu\sum_{j=1}^{p-1}\frac{1}{(\o_j^q-1)(\o_j^{\mu}-1)}
+pq\sum_{i=1}^{\mu-1}\frac{1}{(\nu_i^p-1)(\nu_i^{q}-1)} \notag\\
&\qquad =pq\mu-\frac{1}{12}(p^2+q^2+\mu^2)-\frac{1}{4}(pq+q\mu+p\mu).
\end{align}
\end{theorem}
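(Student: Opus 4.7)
The plan is to prove Theorem \ref{3-T1} by applying the residue theorem on the Riemann sphere to the rational function
\begin{equation*}
F(z) = \frac{1}{z(z^p-1)(z^q-1)(z^\mu-1)}.
\end{equation*}
Because $p, q, \mu$ are pairwise coprime, the three factors $z^n-1$ share no common zero except $z=1$. Hence $F$ has a simple pole at $z=0$, simple poles at each of $\omega_j$ $(1\leq j\leq p-1)$, $\xi_n$ $(1\leq n\leq q-1)$, $\nu_i$ $(1\leq i\leq \mu-1)$, and a pole of order three at $z=1$. Since $F(z)=O(z^{-(p+q+\mu+1)})$ as $z\to\infty$, the residue at infinity vanishes.

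First I would handle the easy contributions. At $z=0$, clearing the factor $z$ gives $\text{Res}_{z=0}F = 1/[(-1)(-1)(-1)] = -1$. At a primitive $p$-th root $\omega_j$, the factor $z\cdot\frac{d}{dz}(z^p-1)\big|_{\omega_j}=\omega_j\cdot p\omega_j^{p-1}=p$, so the simple-pole formula collapses beautifully to
\begin{equation*}
\Res_{z=\omega_j}F = \frac{1}{p(\omega_j^q-1)(\omega_j^\mu-1)},
\end{equation*}
and analogously at the $\xi_n$ and $\nu_i$. It is precisely the extra factor $1/z$ in $F$ that cancels the unwanted $\omega_j$ in the numerator that appears if one instead uses $1/[(z^p-1)(z^q-1)(z^\mu-1)]$.

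The technical step is the triple pole at $z=1$. Setting $u=z-1$, I would use the Taylor expansions
\begin{equation*}
\frac{1}{z^n-1}=\frac{1}{nu}\Bigl(1-\frac{n-1}{2}u+\frac{n^2-1}{12}u^2+O(u^3)\Bigr),\qquad \frac{1}{z}=1-u+u^2+O(u^3),
\end{equation*}
multiply the four series, and read off the coefficient of $u^{-1}$. Writing $a=p-1$, $b=q-1$, $c=\mu-1$, the elementary symmetric functions organize the calculation; after the cross terms involving $a+b+c$ cancel against the $1$ produced by the $1/z$ expansion, the result simplifies to
\begin{equation*}
\Res_{z=1}F = \frac{p^2+q^2+\mu^2+3(pq+q\mu+\mu p)}{12\,pq\mu}.
\end{equation*}

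Finally, the vanishing of the sum of all residues gives
\begin{equation*}
-1+\Res_{z=1}F+\frac{1}{p}\sum_{j=1}^{p-1}\frac{1}{(\omega_j^q-1)(\omega_j^\mu-1)}+\frac{1}{q}\sum_{n=1}^{q-1}\frac{1}{(\xi_n^p-1)(\xi_n^\mu-1)}+\frac{1}{\mu}\sum_{i=1}^{\mu-1}\frac{1}{(\nu_i^p-1)(\nu_i^q-1)}=0,
\end{equation*}
and multiplication by $pq\mu$ produces \eqref{3-1}. The main obstacle is the bookkeeping in the Laurent expansion at $z=1$: one must retain terms up to $u^2$ in each of the four factors and verify that the coefficient of $u^{-1}$ in the product indeed reduces to the symmetric expression above, which matches the right-hand side of \eqref{3-1} after clearing $pq\mu$.
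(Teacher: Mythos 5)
Your argument is correct, and I checked the key computation: expanding $1/(z^{n}-1)=\frac{1}{nu}\bigl(1-\frac{n-1}{2}u+\frac{n^2-1}{12}u^2+O(u^3)\bigr)$ and $1/z=1-u+u^2+O(u^3)$ with $u=z-1$, the coefficient of $u^2$ in the product is $\frac{p^2+q^2+\mu^2}{12}+\frac{pq+q\mu+p\mu}{4}$, so $\Res_{z=1}F=\frac{p^2+q^2+\mu^2+3(pq+q\mu+p\mu)}{12pq\mu}$ exactly as you claim; together with $\Res_{z=0}F=-1$, the simple-pole residues $\frac{1}{p(\omega_j^q-1)(\omega_j^\mu-1)}$ (and their analogues), the pairwise-coprimality guarantee that these poles are distinct and simple, and the vanishing of the residue at infinity, multiplying the residue identity by $pq\mu$ gives \eqref{3-1}. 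However, your route is genuinely different from the paper's. The paper deliberately avoids contour integration here: it starts from the partial-fraction identities obtained by logarithmically differentiating $x^k-1=\prod_j(x-z_j)$, expands $p\mu\sum_{n}\frac{1}{(\xi_n^p-1)(\xi_n^{\mu}-1)}$ as a product of two such sums, splits it into four pieces $I_1,\dots,I_4$, evaluates these via the root-of-unity sums of Lemma \ref{3-L1} and, crucially, the two-sum reciprocity of Lemma \ref{3-L2}, and then rearranges. Your residue computation is self-contained (it needs neither Lemma \ref{3-L1} nor Lemma \ref{3-L2}) and is shorter once the Laurent expansion at the triple pole is done; what the paper's method buys is an elementary, purely algebraic derivation in line with its stated goal of replacing residue-theorem proofs of such three sum relations, and a template (the $I_1,\dots,I_4$ decomposition) that is reused for the later, more elaborate identities such as Theorem \ref{3-T8}, where a single clean auxiliary rational function is less readily available.
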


Theorem \ref{3-T1} is a beautiful three sum  relation, analogous to the reciprocity theorem in Lemma \ref{3-L2}.

\begin{proof} By taking logarithmic derivatives of $x^k-1=\prod_{j=1}^{k}(x-z_j)$ where $z_j=e^{2\pi ij/k}$, we can easily derive
\begin{align}\label{xk1}
\frac{k}{x^k-1}=\sum_{j=1}^k\frac{z_j}{x-z_j}=\frac{1}{x-1}+\sum_{j=1}^{k-1}\frac{z_j}{x-z_j},
\end{align}
and
\begin{align}\label{xk2}
\sum_{j=1}^{k-1}\frac{1}{x-z_j}=\frac{kx^{k-1}}{x^k-1}-\frac{1}{x-1}.
\end{align}
Using \eqref{xk1}, we find that
\begin{align}\label{I}
&p\mu\sum_{n=1}^{q-1}\frac{1}{(\xi_n^p-1)(\xi_n^{\mu}-1)}=
\sum_{n=1}^{q-1}\Big(\frac{1}{\xi_n-1}+\sum_{j=1}^{p-1}\frac{\o_j}{\xi_n-\o_j}\Big)\Big(\frac{1}{\xi_n-1}+\sum_{i=1}^{\mu-1}\frac{\nu_i}{\xi_n-\nu_i}\Big) \notag\\
&=\sum_{n=1}^{q-1}\left\{\frac{1}{(\xi_n-1)^2}+\frac{1}{\xi_n-1}\sum_{j=1}^{p-1}\frac{\o_j}{\xi_n-\o_j}
+\frac{1}{\xi_n-1}\sum_{i=1}^{\mu-1}\frac{\nu_i}{\xi_n-\nu_i}+
\sum_{j=1}^{p-1}\frac{\o_j}{\xi_n-\o_j}\sum_{i=1}^{\mu-1}\frac{\nu_i}{\xi_n-\nu_i}\right\}\notag\\
&=I_1+I_2+I_3+I_4.
\end{align}
By Lemma \ref{3-L1},
\begin{equation}\label{I1}
I_1:=\sum_{n=1}^{q-1}\frac{1}{(\xi_n-1)^2}=-\frac{(q-1)(q-5)}{12}.
\end{equation}
Also, using  \eqref{xk2} and  Lemma \ref{3-L1}, we obtain
\begin{align}\label{I2}
I_2&:=\sum_{n=1}^{q-1}\frac{1}{\xi_n-1}\sum_{j=1}^{p-1}\frac{\o_j}{\xi_n-\o_j}=
\sum_{j=1}^{p-1}\frac{\o_j}{1-\o_j}\sum_{n=1}^{q-1}\Big(\frac{1}{\xi_n-1}-\frac{1}{\xi_n-\o_j} \Big) \notag\\
&=-\sum_{j=1}^{p-1}\frac{\o_j}{\o_j-1}\sum_{n=1}^{q-1}\frac{1}{\xi_n-1}-
\sum_{j=1}^{p-1}\frac{\o_j}{\o_j-1}\sum_{n=1}^{q-1}\frac{1}{\o_j-\xi_n}  \notag\\
&=\frac{(p-1)(q-1)}{4}-\sum_{j=1}^{p-1}\frac{\o_j}{\o_j-1}\left\{\frac{q\o_j^{q-1}}{\o_j^q-1}-\frac{1}{\o_j-1}\right\} \notag\\
&=\frac{(p-1)(q-1)}{4}-\sum_{j=1}^{p-1}\frac{1}{\o_j-1}\left\{q+\frac{q}{\o_j^q-1}-\frac{\o_j}{\o_j-1}\right\}\notag\\
&=\frac{(p-1)(q-1)}{4}+\frac{q(p-1)}{2}-\sum_{j=1}^{p-1}\frac{q}{(\o_j-1)(\o_j^q-1)}-\frac{(p-1)(p+1)}{12}\notag\\
&=-\frac{p^2-9pq+9q+3p-4}{12}-q\sum_{j=1}^{p-1}\frac{1}{(\o_j-1)(\o_j^q-1)}.
\end{align}
Similarly, we can easily see that
\begin{align}\label{I3}
I_3&:=\sum_{n=1}^{q-1}\frac{1}{\xi_n-1}\sum_{i=1}^{\mu-1}\frac{\nu_i}{\xi_n-\nu_i}
=-\frac{\mu^2-9q\mu+9q+3\mu-4}{12}-q\sum_{i=1}^{\mu-1}\frac{1}{(\nu_i-1)(\nu_i^q-1)}.
\end{align}
Next, utilizing \eqref{xk2}, we have
\begin{align}\label{I4}
I_4&:=\sum_{n=1}^{q-1}\sum_{j=1}^{p-1}\frac{\o_j}{\xi_n-\o_j}\sum_{i=1}^{\mu-1}\frac{\nu_i}{\xi_n-\nu_i}
=\sum_{j=1}^{p-1}\sum_{i=1}^{\mu-1}\frac{\o_j\nu_i}{\o_j-\nu_i}
\sum_{n=1}^{q-1}\left\{\frac{1}{\xi_n-\o_j}-\frac{1}{\xi_n-\nu_i}\right\} \notag\\
&=\sum_{j=1}^{p-1}\sum_{i=1}^{\mu-1}\frac{\o_j\nu_i}{\o_j-\nu_i}
\left\{\frac{1}{\o_j-1}-\frac{q\o_j^{q-1}}{\o_j^q-1}-\frac{1}{\nu_i-1}+\frac{q\nu_i^{q-1}}{\nu_i^q-1}\right\} \notag\\
&=\sum_{j=1}^{p-1}\left\{\frac{\o_j}{\o_j-1}-\frac{q\o_j^{q}}{\o_j^q-1}\right\}\sum_{i=1}^{\mu-1}\frac{\nu_i}{\o_j-\nu_i}
+\sum_{i=1}^{\mu-1}\left\{\frac{\nu_i}{\nu_i-1}-\frac{q\nu_i^{q}}{\nu_i^q-1}\right\}\sum_{j=1}^{p-1}\frac{\o_j}{\nu_i-\o_j}\notag\\
&=I_{4,1}+I_{4,2}.
\end{align}
Observe that, by \eqref{xk2},
\begin{align}\label{I4-1}
I_{4,1}&:=\sum_{j=1}^{p-1}\left\{\frac{\o_j}{\o_j-1}-\frac{q\o_j^{q}}{\o_j^q-1}\right\}\sum_{i=1}^{\mu-1}\frac{\nu_i}{\o_j-\nu_i} \notag\\
&=\sum_{j=1}^{p-1}\left\{\frac{\o_j}{\o_j-1}-q-\frac{q}{\o_j^q-1}\right\}
\left\{-(\mu-1)+\o_j\sum_{i=1}^{\mu-1}\frac{1}{\o_j-\nu_i} \right\}\notag\\
&=\sum_{j=1}^{p-1}\left\{\frac{\o_j}{\o_j-1}-q-\frac{q}{\o_j^q-1}\right\}
\left\{-(\mu-1)+\frac{\mu\o_j^{\mu}}{\o_j^{\mu}-1}-\frac{\o_j}{\o_j-1}\right\} \notag\\
&=\sum_{j=1}^{p-1}\left\{1-q+\frac{1}{\o_j-1}-\frac{q}{\o_j^q-1}\right\}
\left\{\frac{\mu}{\o_j^{\mu}-1}-\frac{1}{\o_j-1}\right\} \notag\\
&=(1-q)\mu\sum_{j=1}^{p-1}\frac{1}{\o_j^{\mu}-1}-(1-q)\sum_{j=1}^{p-1}\frac{1}{\o_j-1}-\sum_{j=1}^{p-1}\frac{1}{(\o_j-1)^2} \notag\\
&\quad +\mu\sum_{j=1}^{p-1}\frac{1}{(\o_j-1)(\o_j^{\mu}-1)}-q\mu\sum_{j=1}^{p-1}\frac{1}{(\o_j^q-1)(\o_j^{\mu}-1)}
+q\sum_{j=1}^{p-1}\frac{1}{(\o_j-1)(\o_j^{q}-1)}\notag\\
&=\frac{(p-1)(q-1)(\mu-1)}{2}+\frac{(p-1)(p-5)}{12}+\mu\sum_{j=1}^{p-1}\frac{1}{(\o_j-1)(\o_j^{\mu}-1)}\notag\\
&\quad -q\mu\sum_{j=1}^{p-1}\frac{1}{(\o_j^q-1)(\o_j^{\mu}-1)}+q\sum_{j=1}^{p-1}\frac{1}{(\o_j-1)(\o_j^{q}-1)},
\end{align}
where we applied Lemma \ref{3-L1} for the last equality. Analogously, we obtain
\begin{align}\label{I4-2}
I_{4,2}&:=\sum_{i=1}^{\mu-1}\left\{\frac{\nu_i}{\nu_i-1}-\frac{q\nu_i^{q}}{\nu_i^q-1}\right\}
\sum_{j=1}^{p-1}\frac{\o_j}{\nu_i-\o_j}\notag\\
&=\frac{(p-1)(q-1)(\mu-1)}{2}+\frac{(\mu-1)(\mu-5)}{12}+p\sum_{i=1}^{\mu-1}\frac{1}{(\nu_i-1)(\nu_i^{p}-1)}\notag\\
&\quad -pq\sum_{i=1}^{\mu-1}\frac{1}{(\nu_i^q-1)(\nu_i^{p}-1)}+q\sum_{i=1}^{\mu-1}\frac{1}{(\nu_i-1)(\nu_i^{q}-1)}.
\end{align}
Putting \eqref{I1}--\eqref{I4-2} into \eqref{I} yields
\begin{align}\label{I'}
&p\mu\sum_{n=1}^{q-1}\frac{1}{(\xi_n^p-1)(\xi_n^{\mu}-1)}=-\frac{(q-1)(q-5)}{12}-\frac{p^2-9pq+9q+3p-4}{12}\notag\\
&\quad -\frac{\mu^2-9q\mu+9q+3\mu-4}{12}+(p-1)(q-1)(\mu-1)+\frac{(p-1)(p-5)}{12}\notag\\
&\quad +\frac{(\mu-1)(\mu-5)}{12}+\mu\sum_{j=1}^{p-1}\frac{1}{(\o_j-1)(\o_j^{\mu}-1)}
+p\sum_{i=1}^{\mu-1}\frac{1}{(\nu_i-1)(\nu_i^{p}-1)}\notag\\
&\quad-q\mu\sum_{j=1}^{p-1}\frac{1}{(\o_j^q-1)(\o_j^{\mu}-1)}
-pq\sum_{i=1}^{\mu-1}\frac{1}{(\nu_i^q-1)(\nu_i^{p}-1)}.
\end{align}
By Lemma \ref{3-L2}, we have
\begin{align*}
\mu\sum_{j=1}^{p-1}\frac{1}{(\o_j-1)(\o_j^{\mu}-1)}+p\sum_{i=1}^{\mu-1}\frac{1}{(\nu_i-1)(\nu_i^{p}-1)}
=-\frac{1}{12}(p^2+{\mu}^2-9p\mu+3p+3\mu+1).
\end{align*}
Putting this  into \eqref{I'} and simplifying, we arrive at
\begin{align*}
&p\mu\sum_{n=1}^{q-1}\frac{1}{(\xi_n^p-1)(\xi_n^{\mu}-1)}=pq\mu-\frac{1}{12}(p^2+q^2+\mu^2)-\frac{1}{4}(pq+q\mu+p\mu)\\
&\qquad-q\mu\sum_{j=1}^{p-1}\frac{1}{(\o_j^q-1)(\o_j^{\mu}-1)}
-pq\sum_{i=1}^{\mu-1}\frac{1}{(\nu_i^q-1)(\nu_i^{p}-1)}.
\end{align*}
This completes our proof.
\end{proof}

\begin{theorem}\label{3-T2}
If $p, q$ and $\mu$ are positive integers such that $(p, q)=(q, \mu)=(p, \mu)=1,$ and $\xi_n=e^{2\pi in/q},$ then
\begin{align*}
\sum_{n=1}^{q-1}\cot\Big(\frac{\pi np}{q}\Big)\cot\Big(\frac{\pi n\mu}{q}\Big)
=q-1-4\sum_{n=1}^{q-1}\frac{1}{(\xi_n^p-1)(\xi_n^{\mu}-1)}.
\end{align*}
\end{theorem}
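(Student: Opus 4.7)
The plan is to convert each cotangent into a rational function of $\xi_n^p$ or $\xi_n^\mu$, expand the product, and evaluate three of the four resulting sums using Lemma \ref{3-L1}, leaving the double-pole sum as the only piece that does not collapse.

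First I would use the elementary identity
\begin{equation*}
\cot(\pi x) = i\cdot\frac{e^{2\pi ix}+1}{e^{2\pi ix}-1} = i + \frac{2i}{e^{2\pi ix}-1},
\end{equation*}
applied at $x=np/q$ and $x=n\mu/q$, so that
\begin{equation*}
\cot\Bigl(\frac{\pi np}{q}\Bigr)\cot\Bigl(\frac{\pi n\mu}{q}\Bigr)
= \Bigl(i+\frac{2i}{\xi_n^p-1}\Bigr)\Bigl(i+\frac{2i}{\xi_n^{\mu}-1}\Bigr)
= -1-\frac{2}{\xi_n^p-1}-\frac{2}{\xi_n^{\mu}-1}-\frac{4}{(\xi_n^p-1)(\xi_n^{\mu}-1)}.
\end{equation*}

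Next I would sum both sides over $n=1,\ldots,q-1$. The constant $-1$ contributes $-(q-1)$. For the two single-pole sums, I would invoke the coprimality $(p,q)=1$, which makes $n\mapsto np\pmod q$ a permutation of $\{1,\ldots,q-1\}$, so the first identity of Lemma \ref{3-L1} (applied with $m=p$ and $m=\mu$ respectively) gives
\begin{equation*}
\sum_{n=1}^{q-1}\frac{1}{\xi_n^p-1}=\sum_{n=1}^{q-1}\frac{1}{\xi_n^{\mu}-1}=-\frac{q-1}{2}.
\end{equation*}

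Assembling the pieces yields
\begin{equation*}
\sum_{n=1}^{q-1}\cot\Bigl(\frac{\pi np}{q}\Bigr)\cot\Bigl(\frac{\pi n\mu}{q}\Bigr)
= -(q-1) + 2(q-1) - 4\sum_{n=1}^{q-1}\frac{1}{(\xi_n^p-1)(\xi_n^{\mu}-1)}
= (q-1) - 4\sum_{n=1}^{q-1}\frac{1}{(\xi_n^p-1)(\xi_n^{\mu}-1)},
\end{equation*}
which is exactly the claimed identity. There is no genuine obstacle here; the only point demanding care is checking that the coprimality hypotheses $(p,q)=(\mu,q)=1$ are precisely what legalizes the reindexing used to invoke Lemma \ref{3-L1}, while the third coprimality $(p,\mu)=1$ is not needed for this particular identity (though it is inherited from the ambient setting of Theorem \ref{3-T1}).
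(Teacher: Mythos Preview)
Your proof is correct and is essentially identical to the paper's own argument: both rewrite $\cot(\pi x)=i\,(e^{2\pi ix}+1)/(e^{2\pi ix}-1)$, expand the resulting product into the same four terms, and apply Lemma~\ref{3-L1} to collapse the single-pole sums. Your closing remark that the hypothesis $(p,\mu)=1$ is not used here is also accurate.
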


\begin{proof} Observe that
\begin{align*}
&\sum_{n=1}^{q-1}\cot\Big(\frac{\pi np}{q}\Big)\cot\Big(\frac{\pi n\mu}{q}\Big)=
-\sum_{n=1}^{q-1}\frac{\xi_n^p+1}{\xi_n^p-1}\frac{\xi_n^{\mu}+1}{\xi_n^{\mu}-1}
=-\sum_{n=1}^{q-1}\Big(1+\frac{2}{\xi_n^p-1}\Big)\Big(1+\frac{2}{\xi_n^{\mu}-1}\Big)\\
&=-\sum_{n=1}^{q-1}\left\{1+\frac{2}{\xi_n^p-1}+\frac{2}{\xi_n^{\mu}-1}+\frac{4}{(\xi_n^p-1)(\xi_n^{\mu}-1)}\right\}\\
&=q-1-4\sum_{n=1}^{q-1}\frac{1}{(\xi_n^p-1)(\xi_n^{\mu}-1)},
\end{align*}
where we employed Lemma \ref{3-L1} for the last equality.
\end{proof}

In order to define Dedekind sums, first set
\begin{equation*}
((x))=
\begin{cases}
x-[x]-\frac12, & \text{if } ~ x\not\in\mathbb{Z}, \\
0, & \text{otherwise}.
\end{cases}
\end{equation*}

\begin{definition} If $h$ and $k$ are relatively prime positive integers, then the Dedekind sum $s(h,k)$ is defined by
\begin{equation*}
s(h,k):=\sum_{n=1}^{k-1}\Big(\Big(\frac{n}{k}\Big)\Big)\Big(\Big(\frac{hn}{k}\Big)\Big).
\end{equation*}
\end{definition}
The Dedekind sums satisfy a famous reciprocity theorem \cite[p.~4]{rg}.  If $h$ and $k$ are coprime positive integers, then
\begin{equation*}
s(h,k)+s(k,h)=-\df14+\df{1}{12}\left(\df{h}{k}+\df{1}{hk}+\df{k}{h}\right).
\end{equation*}

Moreover, $s(h,k)$ has the representation \cite[p.~26]{rg}
\begin{equation*}
s(h,k)=\df{1}{4k}\sum_{n=1}^{k-1}\cot\Big(\dfrac{\pi n}{k}\Big)\cot\Big(\dfrac{\pi hn}{k}\Big).
\end{equation*}

From Theorems \ref{3-T1} and \ref{3-T2}, we can easily deduce the following corollary,
which is equivalent to the three sum relation for Dedekind sums, and which was proved by non-elementary means in \cite[Equation (3.10), p.~292]{Berndt}.

\begin{corollary}
If $p, q$ and $\mu$ are positive integers such that $(p, q)=(q, \mu)=(p, \mu)=1,$ then
\begin{align*}
&p\mu\sum_{n=1}^{q-1}\cot\Big(\frac{\pi np}{q}\Big)\cot\Big(\frac{\pi n\mu}{q}\Big)
+q\mu\sum_{n=1}^{p-1}\cot\Big(\frac{\pi nq}{p}\Big)\cot\Big(\frac{\pi n\mu}{p}\Big)\notag\\
&\qquad +pq\sum_{n=1}^{\mu-1}\cot\Big(\frac{\pi np}{\mu}\Big)\cot\Big(\frac{\pi nq}{\mu}\Big)
=\frac{p^2+q^2+\mu^2}{3}-pq\mu.
\end{align*}
\end{corollary}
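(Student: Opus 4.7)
The plan is to reduce everything to Theorems \ref{3-T1} and \ref{3-T2}: use Theorem \ref{3-T2} three times (once for each of the cotangent sums over $q-1$, $p-1$, and $\mu-1$ terms) to rewrite each cotangent sum as a constant minus four times a rational sum in roots of unity of the form $\sum 1/((\zeta^a-1)(\zeta^b-1))$, and then recognize that the resulting linear combination of such rational sums is exactly the left-hand side of Theorem \ref{3-T1}.

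First I would apply Theorem \ref{3-T2} in its three cyclic variants. The theorem as stated gives
\[
\sum_{n=1}^{q-1}\cot\Big(\tfrac{\pi np}{q}\Big)\cot\Big(\tfrac{\pi n\mu}{q}\Big)=q-1-4\sum_{n=1}^{q-1}\frac{1}{(\xi_n^p-1)(\xi_n^{\mu}-1)},
\]
and the analogous statements with the roles of $p,q,\mu$ permuted (note that $(p,q)=(q,\mu)=(p,\mu)=1$ is symmetric in the three variables, so the hypothesis of Theorem \ref{3-T2} holds in all three guises). Multiplying these three identities by $p\mu$, $q\mu$, and $pq$ respectively and adding, the constant contributions assemble to
\[
p\mu(q-1)+q\mu(p-1)+pq(\mu-1)=3pq\mu-(pq+q\mu+p\mu),
\]
while the rational sum contributions produce $-4$ times the left-hand side of Theorem \ref{3-T1}.

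Second I would substitute the evaluation from Theorem \ref{3-T1} into this combined expression. That replaces
\[
p\mu\sum_{n=1}^{q-1}\tfrac{1}{(\xi_n^p-1)(\xi_n^{\mu}-1)}+q\mu\sum_{j=1}^{p-1}\tfrac{1}{(\o_j^q-1)(\o_j^{\mu}-1)}+pq\sum_{i=1}^{\mu-1}\tfrac{1}{(\nu_i^p-1)(\nu_i^{q}-1)}
\]
by $pq\mu-\tfrac{1}{12}(p^2+q^2+\mu^2)-\tfrac{1}{4}(pq+q\mu+p\mu)$, and the left-hand side of the corollary becomes
\[
3pq\mu-(pq+q\mu+p\mu)-4\Bigl(pq\mu-\tfrac{1}{12}(p^2+q^2+\mu^2)-\tfrac{1}{4}(pq+q\mu+p\mu)\Bigr).
\]
Collecting like terms, the $pq+q\mu+p\mu$ pieces cancel, $3pq\mu-4pq\mu=-pq\mu$, and the quadratic piece becomes $+\tfrac{1}{3}(p^2+q^2+\mu^2)$, yielding exactly $\tfrac{1}{3}(p^2+q^2+\mu^2)-pq\mu$.

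There is essentially no obstacle here; the deduction is a bookkeeping exercise, and the only point requiring a moment's care is confirming that the coprimality hypotheses in Theorem \ref{3-T2} apply symmetrically to all three cotangent sums, which they do because the three pairs $(p,q)$, $(q,\mu)$, $(p,\mu)$ are jointly coprime by hypothesis.
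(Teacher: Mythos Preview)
Your proof is correct and follows exactly the approach indicated in the paper, which simply states that the corollary follows from Theorems \ref{3-T1} and \ref{3-T2}. Your detailed bookkeeping (multiplying the three instances of Theorem \ref{3-T2} by $p\mu$, $q\mu$, $pq$, summing, and substituting Theorem \ref{3-T1}) is precisely the intended deduction.
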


\begin{corollary}[Theorem 2.14 in \cite{BY}]
Let $p$ and $q$ be coprime positive integers, and let $p+q=\mu c,$ where $\mu$ and $c$ are positive integers.
Then,
\begin{align*}
&p\sum_{n=1}^{q-1}\cot\Big(\frac{\pi np}{q}\Big)\cot\Big(\frac{\pi n\mu}{q}\Big)
+q\sum_{n=1}^{p-1}\cot\Big(\frac{\pi nq}{p}\Big)\cot\Big(\frac{\pi n\mu}{p}\Big)\\
&\quad=\frac{1}{3\mu}(p^2+q^2+\mu^2)+\frac{pq}{3\mu}(\mu^2-6\mu+2).
\end{align*}
\end{corollary}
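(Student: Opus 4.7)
The plan is to derive this corollary by specializing the preceding three sum relation for cotangents to the case $p+q = \mu c$. First, I would verify that the hypotheses of that relation hold: namely $(p,q)=(q,\mu)=(p,\mu)=1$. The first is given; for the other two, if a prime $r$ divided $\mu$ and $p$ (or $\mu$ and $q$), then from $p+q=\mu c$ it would also divide the other of $p,q$, contradicting $(p,q)=1$. So the three sum identity of the preceding corollary applies.

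Next, I would divide that identity through by $\mu$ to obtain
\begin{equation*}
pS_1 + qS_2 + \frac{pq}{\mu}\,S_3 \;=\; \frac{p^2+q^2+\mu^2}{3\mu} - pq,
\end{equation*}
where $S_1$ and $S_2$ are exactly the two sums on the left-hand side of the target identity, and
\begin{equation*}
S_3 \;:=\; \sum_{n=1}^{\mu-1}\cot\Big(\frac{\pi np}{\mu}\Big)\cot\Big(\frac{\pi nq}{\mu}\Big).
\end{equation*}
The entire problem then reduces to evaluating $S_3$ in closed form.

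The key observation, and the place where the hypothesis $p+q=\mu c$ enters decisively, is that $\frac{\pi np}{\mu}+\frac{\pi nq}{\mu}=nc\pi$, so $\cot(\pi nq/\mu)=\cot(nc\pi-\pi np/\mu)=-\cot(\pi np/\mu)$; this is valid since $(p,\mu)=1$ forces $np/\mu\notin\mathbb{Z}$ for $1\le n\le \mu-1$. Consequently, I can rewrite
\begin{equation*}
S_3 \;=\; -\sum_{n=1}^{\mu-1}\cot^2\Big(\frac{\pi np}{\mu}\Big) \;=\; -\sum_{n=1}^{\mu-1}\cot^2\Big(\frac{\pi n}{\mu}\Big) \;=\; -\frac{(\mu-1)(\mu-2)}{3},
\end{equation*}
where the middle equality uses that multiplication by $p$ permutes $(\mathbb{Z}/\mu\mathbb{Z})^{\times}$, and the last evaluation follows from Lemma~\ref{3-L1} via $\cot^2(\pi n/\mu) = -1 - 4z_n/(z_n-1)^2$ with $z_n=e^{2\pi i n/\mu}$.

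Substituting this value of $S_3$ into the displayed equation and simplifying with $(\mu-1)(\mu-2)-3\mu=\mu^2-6\mu+2$ should yield precisely the claimed right-hand side. I expect the pivotal (and essentially only non-routine) step to be the cotangent sign flip that collapses $S_3$ into the evaluable one-variable sum of $\cot^2$'s; everything else is bookkeeping algebra.
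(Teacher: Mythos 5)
Your argument is correct, and it rests on the same pivotal observation as the paper's proof: since $p+q=\mu c$, the sum indexed by $\mu$ in the three sum relation collapses to a single classical sum. The difference is purely in the level at which you work. The paper does not invoke the cotangent three sum corollary; instead it applies Theorem \ref{3-T2} to each of the two cotangent sums, and then uses Theorem \ref{3-T1} together with the evaluation \eqref{3-C3}, namely $\sum_{n=1}^{\mu-1}\big((\nu_n^q-1)(\nu_n^p-1)\big)^{-1}=-\sum_{n=1}^{\mu-1}\nu_n^p(\nu_n^p-1)^{-2}=\frac{(\mu-1)(\mu+1)}{12}$, obtained from $\nu_n^q=\nu_n^{-p}$ and Lemma \ref{3-L1}. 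You instead specialize the already-stated cotangent three sum relation, use the sign flip $\cot(\pi nq/\mu)=-\cot(\pi np/\mu)$ (the trigonometric counterpart of $\nu_n^q=\nu_n^{-p}$), and evaluate $\sum_{n=1}^{\mu-1}\cot^2(\pi n/\mu)=\frac{(\mu-1)(\mu-2)}{3}$, which again comes from the same entry $\sum_{n}z_n/(z_n-1)^2$ of Lemma \ref{3-L1} (or from \eqref{C8.6} via $\cot^2=\csc^2-1$). So the two proofs are equivalent in substance; yours is slightly more streamlined because it reuses the cotangent corollary rather than redoing the conversion, and it has the additional merit of making explicit the verification that $(p,\mu)=(q,\mu)=1$ follows from $(p,q)=1$ and $p+q=\mu c$, a hypothesis check the paper leaves implicit. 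Your handling of the degenerate points (well-definedness of $\cot(\pi np/\mu)$ for $1\le n\le\mu-1$, and the permutation of residues modulo $\mu$) is also in order, and the final simplification $(\mu-1)(\mu-2)-3\mu=\mu^2-6\mu+2$ reproduces the stated right-hand side exactly.
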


\begin{proof}
By Theorem \ref{3-T2}, we find that
\begin{align}\label{3-C2}
&p\sum_{n=1}^{q-1}\cot\Big(\frac{\pi np}{q}\Big)\cot\Big(\frac{\pi n\mu}{q}\Big)
+q\sum_{n=1}^{p-1}\cot\Big(\frac{\pi nq}{p}\Big)\cot\Big(\frac{\pi n\mu}{p}\Big) \notag\\
&=2pq-p-q-4p\sum_{n=1}^{q-1}\frac{1}{(\xi_n^p-1)(\xi_n^{\mu}-1)}-4q\sum_{n=1}^{p-1}\frac{1}{(\o_n^q-1)(\o_n^{\mu}-1)}.
\end{align}
Now, if $\nu_n=e^{2\pi in/\mu},$ then we see that
\begin{align}\label{3-C3}
\sum_{n=1}^{\mu-1}\frac{1}{(\nu_n^q-1)(\nu_n^{p}-1)}=\sum_{n=1}^{\mu-1}\frac{1}{(\nu_n^{-p}-1)(\nu_n^{p}-1)}
=-\sum_{n=1}^{\mu-1}\frac{\nu_n^p}{(\nu_n^{p}-1)^2}=\frac{(\mu-1)(\mu+1)}{12},
\end{align}
where we used the condition $q=\mu c-p$ and Lemma \ref{3-L1}.

Therefore, from \eqref{3-C2}, \eqref{3-C3}, and Theorem \ref{3-T1}, it follows that
\begin{align*}
&p\sum_{n=1}^{q-1}\cot\Big(\frac{\pi np}{q}\Big)\cot\Big(\frac{\pi n\mu}{q}\Big)
+q\sum_{n=1}^{p-1}\cot\Big(\frac{\pi nq}{p}\Big)\cot\Big(\frac{\pi n\mu}{p}\Big) \notag\\
&=2pq-p-q-4\left\{pq-\frac{1}{12\mu}(p^2+q^2+\mu^2)-\frac{1}{4\mu}(pq+q\mu+p\mu)-\frac{pq}{12\mu}(\mu-1)(\mu+1)\right\}\\
&=\frac{1}{3\mu}(p^2+q^2+\mu^2)+\frac{pq}{3\mu}(\mu^2-6\mu+2).
\end{align*}
Hence, we complete the proof.
\end{proof}

The modified Dedekind sum $S_{\a, \b}(h, k)$ \cite{Berndt0, Berndt} is defined by
\begin{align*}
S_{\a, \b}(h, k):=\sum_{j \pmod{hk}}e^{2(j\a/h+j\b/k)\pi i}((j/hk))((jh'/k)),
\end{align*}
where $\a, \b, h$ and $k$ are positive integers with $(h, k)=1$ and $hh'\equiv 1 \pmod k.$

\begin{lemma}[Lemma 6.1 in \cite{Berndt}]
For $k>1,$
\begin{align}
S_{\a, \b}(h, k)=\frac{1}{4k}\sum_{j=1}^{k-1}\cot\left(\Big(\frac{j}{k}+\frac{\a}{h}+\frac{\b}{k}\Big)\pi\right)
\cot\left(\frac{jh}{k}\pi\right).\label{cotcot}
\end{align}
\end{lemma}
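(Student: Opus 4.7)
The plan is to expand both sawtooth factors using the finite Fourier identity
$$((a/N))=\frac{i}{2N}\sum_{\nu=1}^{N-1}\cot\Big(\frac{\pi\nu}{N}\Big)e^{2\pi i\nu a/N},$$
valid for every integer $a$ (both sides vanish when $N\mid a$). I would first verify this identity quickly by computing the finite Fourier transform of $f(j)=((j/N))$ on $\{0,1,\dots,N-1\}$, obtaining $\hat f(m)=\tfrac{i}{2}\cot(\pi m/N)$ for $m\not\equiv 0\pmod{N}$ via the closed form of $\sum_{j=0}^{N-1}jz^j$ at an $N$-th root of unity, and then applying Fourier inversion.

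Applied to $((j/(hk)))$ (with $N=hk$) and to $((jh'/k))$ (with $N=k$) inside the definition of $S_{\a,\b}(h,k)$, and after interchanging the three summations, the sum becomes
$$S_{\a,\b}(h,k)=-\frac{1}{4hk^{2}}\sum_{\nu=1}^{hk-1}\sum_{\mu=1}^{k-1}\cot\Big(\frac{\pi\nu}{hk}\Big)\cot\Big(\frac{\pi\mu}{k}\Big)\sum_{j\!\!\pmod{hk}}e^{2\pi ij\Theta(\nu,\mu)},$$
where $\Theta(\nu,\mu)=\tfrac{\a}{h}+\tfrac{\b}{k}+\tfrac{\nu}{hk}+\tfrac{\mu h'}{k}$. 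The innermost sum equals $hk$ if $\Theta(\nu,\mu)\in\Z$ and $0$ otherwise; equivalently, one needs $k\a+h\b+\nu+h\mu h'\equiv 0\pmod{hk}$. Reducing this relation modulo $h$ and modulo $k$, and using $hh'\equiv 1\pmod{k}$, forces
$$\nu\equiv -k\a\pmod{h},\qquad \nu\equiv -h\b-\mu\pmod{k},$$
so by the Chinese Remainder Theorem and the coprimality $(h,k)=1$, a unique $\nu=\nu(\mu)\in\{0,1,\dots,hk-1\}$ survives for each $\mu\in\{1,\dots,k-1\}$.

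After this reduction only a single sum over $\mu$ remains, and the defining congruence for $\nu(\mu)$ gives $\nu(\mu)/(hk)\equiv-\a/h-\b/k-\mu h'/k\pmod{1}$. By the oddness and $\pi$-periodicity of the cotangent, $\cot(\pi\nu(\mu)/(hk))$ can be replaced by $-\cot\bigl(\pi[\a/h+\b/k+\mu h'/k]\bigr)$, absorbing the overall minus sign. Finally, the substitution $j\equiv \mu h'\pmod{k}$, which is a bijection on $\{1,\dots,k-1\}$ with inverse $\mu\equiv jh\pmod{k}$, converts $\mu h'/k$ into $j/k\pmod{1}$ and $\mu/k$ into $jh/k\pmod{1}$, matching the right-hand side of \eqref{cotcot} exactly.

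The most delicate step is the bookkeeping in the CRT extraction of $\nu(\mu)$ together with the manipulations of $\cot$ modulo $\pi$; one must also check that the exceptional case $\nu(\mu)=0$ (which would silently drop a term from the Fourier-series sum) corresponds to an integer value of $\a/h+\b/k+\mu h'/k$, i.e., precisely a point at which the right-hand side of \eqref{cotcot} is itself singular. Thus this degeneracy is automatically covered by the standing non-singularity hypothesis implicit in the statement of the lemma, and the identity holds wherever both sides are defined.
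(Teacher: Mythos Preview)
The paper does not supply a proof of this lemma; it is quoted verbatim as Lemma~6.1 from \cite{Berndt} and used as input for the subsequent development. Your argument is correct and is in fact the standard route to such cotangent representations: expand each sawtooth factor by the finite Fourier identity $((a/N))=\tfrac{i}{2N}\sum_{\nu=1}^{N-1}\cot(\pi\nu/N)e^{2\pi i\nu a/N}$, collapse the resulting character sum over $j\pmod{hk}$, and pick out the surviving frequency via the Chinese Remainder Theorem. The key relation $\nu(\mu)/(hk)\equiv -(\a/h+\b/k+\mu h'/k)\pmod 1$ follows directly from the orthogonality condition $\Theta(\nu,\mu)\in\Z$, and the final bijective change of variable $j\equiv\mu h'\pmod k$ lands exactly on \eqref{cotcot}. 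Your treatment of the boundary case $\nu(\mu)=0$ is also right: under the hypothesis $(\a,h)=1$ this forces $h\mid\a$, hence $h=1$, and in that regime the offending term is precisely where the right-hand side of \eqref{cotcot} has a pole, so no term is lost under the implicit well-definedness assumption. In short, there is nothing in the present paper to compare against, and your proof stands on its own.
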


We generalize \eqref{cotcot} by introducing another parameter. Define for each positive integer $\mu,$
\begin{align*}
S_{\a, \b}(h, k | \mu)&:=\frac{1}{4k}\sum_{j=1}^{k-1}\cot\left(\mu\Big(\frac{j}{k}+\frac{\a}{h}+\frac{\b}{k}\Big)\pi\right)
\cot\left(\frac{jh}{k}\pi\right).
\end{align*}

\begin{theorem}\label{3-T6}
Let $h, k$ and $\mu$ be pairwise relatively prime positive integers with $h, k \geq 2,$ 
and let $\xi_n=e^{2\pi in/k}$ and $\o_n=e^{2\pi in/h}.$ For positive integers $\a$ and $\b$ such that  $(\a, h)=(\b, k)=1,$
\begin{align*}
4kS_{\a, \b}(h, k | \mu)=\frac{2}{\xi_{\b}^{\mu}\o_{\a}^{\mu}-1}-\frac{2k}{\o^{k\mu}_{\a}-1}
-4\sum_{j=1}^{k-1}\frac{1}{(\xi^{\mu}_{j+\b}\o^{\mu}_{\a}-1)(\xi_j^h-1)}.
\end{align*}
\end{theorem}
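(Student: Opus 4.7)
The plan is to reduce everything to the exponential identity
\[
\cot(\pi x) \;=\; i \;+\; \frac{2i}{e^{2\pi i x}-1},
\]
apply it to each cotangent factor in the definition of $S_{\a,\b}(h,k\mid\mu)$, and then use Lemma~\ref{3-L1} plus a standard sum over $k$th roots of unity to clean up.

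First I would note that for the two arguments appearing in the sum,
\[
e^{2\pi i\mu(j/k+\a/h+\b/k)} \;=\; \xi_{j+\b}^{\mu}\,\o_{\a}^{\mu}, \qquad e^{2\pi i(jh/k)} \;=\; \xi_j^h,
\]
so the product of the two cotangents equals
\[
\Bigl(i+\tfrac{2i}{\xi_{j+\b}^{\mu}\o_{\a}^{\mu}-1}\Bigr)\Bigl(i+\tfrac{2i}{\xi_j^h-1}\Bigr)
\;=\; -1 \;-\; \tfrac{2}{\xi_{j+\b}^{\mu}\o_{\a}^{\mu}-1} \;-\; \tfrac{2}{\xi_j^h-1} \;-\; \tfrac{4}{(\xi_{j+\b}^{\mu}\o_{\a}^{\mu}-1)(\xi_j^h-1)}.
\]
Summing this identity over $j=1,\dots,k-1$ gives four pieces. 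The constant contribution is $-(k-1)$, and by Lemma~\ref{3-L1} (with the coprimality of $h$ and $k$) the piece $-2\sum_{j=1}^{k-1}\frac{1}{\xi_j^h-1}$ equals $k-1$; these two cancel. The last piece is already the final double sum in the statement.

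The remaining work is to evaluate $-2\sum_{j=1}^{k-1}\frac{1}{\xi_{j+\b}^{\mu}\o_{\a}^{\mu}-1}$. Since $\xi_{j+\b}^{\mu}$ depends only on $(j+\b)\bmod k$, reindexing shows that as $j$ ranges over $1,\dots,k-1$, the value $(j+\b)\bmod k$ ranges over all residues mod $k$ except $\b$. Because $(\mu,k)=1$, the set $\{\xi_{j'}^{\mu}:0\le j'<k\}$ is precisely the set of all $k$th roots of unity. I would then invoke the elementary identity
\[
\sum_{\zeta^k=1}\frac{1}{y\zeta-1} \;=\; \frac{k}{y^k-1},
\]
valid for $y^k\neq 1$ (proved either by logarithmic differentiation of $\prod_{\zeta^k=1}(y\zeta-1)=(-1)^k(1-y^k)$, or by expanding into a geometric series for $|y|<1$ and then continuing analytically). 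Applied with $y=\o_{\a}^{\mu}$, this yields
\[
\sum_{j=1}^{k-1}\frac{1}{\xi_{j+\b}^{\mu}\o_{\a}^{\mu}-1} \;=\; \frac{k}{\o_{\a}^{k\mu}-1}\;-\;\frac{1}{\xi_{\b}^{\mu}\o_{\a}^{\mu}-1},
\]
whose $(-2)$-multiple is exactly the first two isolated terms on the right of the theorem.

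This is primarily a bookkeeping proof; the real care lies in the reindexing step, where one has to verify that the coprimality hypotheses guarantee that $\xi_{\b}^{\mu}\o_{\a}^{\mu}$ is not $1$ (so the isolated term is finite) and that $\o_{\a}^{k\mu}\neq 1$ (so the root-of-unity identity applies). Both follow from $(\a,h)=(\b,k)=(h,k)=(\mu,h)=(\mu,k)=1$ together with $h,k\ge 2$, since $\o_{\a}^{k\mu}=e^{2\pi i\a k\mu/h}$ and $\a k\mu\not\equiv0\pmod h$. Once these sanity checks are in place, combining the pieces gives the asserted identity.
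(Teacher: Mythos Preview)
Your proof is correct and follows essentially the same route as the paper: expand each cotangent via $\cot(\pi x)=i+\tfrac{2i}{e^{2\pi ix}-1}$ (equivalently, the paper's $i\frac{z+1}{z-1}=1+\tfrac{2}{z-1}$ form), cancel the constant and $\sum(\xi_j^h-1)^{-1}$ pieces using Lemma~\ref{3-L1}, and evaluate the remaining single sum with the root-of-unity identity $\sum_{\zeta^k=1}(y\zeta-1)^{-1}=k/(y^k-1)$, which is exactly the paper's formula~\eqref{a1}. Your explicit discussion of the reindexing and the nondegeneracy checks is a welcome addition that the paper leaves implicit.
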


\begin{proof}
Using an argument similar to that in the proof of Theorem \ref{3-T2}, we obtain
\begin{align}\label{3-T61}
4kS_{\a, \b}(h, k | \mu)&=-\sum_{j=1}^{k-1}\frac{(\xi_{j+\b}^{\mu}\o_{\a}^{\mu}+1)}{(\xi_{j+\b}^{\mu}\o_{\a}^{\mu}-1)}
\frac{(\xi_j^{h}+1)}{(\xi_j^{h}-1)} \notag\\
&=-\sum_{j=1}^{k-1}\Big(1+\frac{2}{\xi_{j+\b}^{\mu}\o_{\a}^{\mu}-1}\Big)\Big(1+\frac{2}{\xi_j^{h}-1}\Big)\notag\\
&=-\sum_{j=1}^{k-1}\left\{1+\frac{2}{\xi_{j+\b}^{\mu}\o_{\a}^{\mu}-1}
+\frac{2}{\xi_j^{h}-1}+\frac{4}{(\xi_{j+\b}^{\mu}\o_{\a}^{\mu}-1)(\xi_j^h-1)}\right\}\notag\\
&=-\sum_{j=1}^{k-1}\left\{\frac{2}{\xi_{j+\b}^{\mu}\o_{\a}^{\mu}-1}+\frac{4}{(\xi_{j+\b}^{\mu}\o_{\a}^{\mu}-1)(\xi_j^h-1)}\right\},
\end{align}
where we applied Lemma \ref{3-L1} for the last equality.

Taking logarithmic derivatives of $x^m-a^m=\prod_{j=1}^{m}(x-z_ja),$ where  $z_j=e^{2\pi ij/m},$ we deduce that
\begin{align}\label{xam}
\sum_{j=1}^m\frac{1}{x-az_j}=\frac{mx^{m-1}}{x^m-a^m},
\end{align}
and also
\begin{align}\label{xam2}
\sum_{j=1}^m\frac{1}{(x-az_j)^2}=-\frac{m(m-1)x^{m-2}}{x^m-a^m}+\frac{m^2x^{2m-2}}{(x^m-a^m)^2}.
\end{align}
Thus, if $a^m\neq 1,$
\begin{align}\label{a1}
\sum_{j=1}^m\frac{1}{az_j-1}=\frac{m}{a^m-1},
\end{align}
and
\begin{align}\label{a2}
\sum_{j=1}^m\frac{1}{(az_j-1)^2}=\frac{m(m-1)}{a^m-1}+\frac{m^2}{(a^m-1)^2}.
\end{align}
Applying \eqref{a1} to  \eqref{3-T61} yields
\begin{align}\label{T}
4kS_{\a, \b}(h, k | \mu)&=-2\left\{\frac{k}{\o_{\a}^{\mu k}-1}-\frac{1}{\xi_{\b}^{\mu}\o_{\a}^{\mu}-1}\right\}
-4\sum_{j=1}^{k-1}\frac{1}{(\xi^{\mu}_{j+\b}\o^{\mu}_{\a}-1)(\xi_j^h-1)},
\end{align}
which completes the proof.
\end{proof}

\begin{theorem}\label{3-T7}
Let $k,  h$ and $\mu$ be pairwise relatively prime positive integers with $k, h \geq 2,$ 
and let $\xi_n=e^{2\pi in/k},$ $\o_n=e^{2\pi in/h}$ and $\nu_n=e^{2\pi in/\mu}.$
For positive integers $\a$ and $\b$ such that  $(\a, h)=(\b, k)=1,$
\begin{align*}
&\sum_{j=1}^{\mu}\cot\left(k\Big(\frac{j}{\mu}+\frac{\a}{h}\Big)\pi\right)
\cot\left(h\Big(\frac{j}{\mu}+\frac{\b}{k}\Big)\pi\right)\\
&\quad =-\mu\left(1+\frac{2}{\o_{\a}^{k\mu}-1}+\frac{2}{\xi_{\b}^{h\mu}-1}\right)-
4\sum_{j=1}^{\mu}\frac{1}{(\nu_{j}^{k}\o_{\a}^{k}-1)(\nu_j^h\xi_{\b}^{h}-1)}.
\end{align*}
\end{theorem}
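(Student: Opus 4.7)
The plan is to mirror the strategy used in the proofs of Theorems \ref{3-T2} and \ref{3-T6}, converting each cotangent into exponential form via $\cot(\pi z) = i(e^{2\pi i z}+1)/(e^{2\pi i z}-1)$, so that a product of two cotangents becomes $-(A+1)(B+1)/\{(A-1)(B-1)\}$ for roots of unity $A, B$.

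Setting $A = e^{2\pi i k(j/\mu + \a/h)} = \nu_j^k \o_\a^k$ and $B = e^{2\pi i h(j/\mu + \b/k)} = \nu_j^h \xi_\b^h$, and using $(x+1)/(x-1) = 1 + 2/(x-1)$ on each factor, I would rewrite the left-hand side as
\begin{align*}
-\sum_{j=1}^{\mu}\left\{1 + \df{2}{\nu_j^k\o_\a^k - 1} + \df{2}{\nu_j^h\xi_\b^h - 1} + \df{4}{(\nu_j^k\o_\a^k - 1)(\nu_j^h\xi_\b^h - 1)}\right\}.
\end{align*}
The constant $1$ contributes $-\mu$, and the last summand is exactly the double sum on the right-hand side of the theorem, so it remains only to evaluate the two single sums.

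For the first, I would invoke $(k,\mu)=1$: as $j$ runs over $\{1,\dots,\mu\}$, $jk \pmod \mu$ (with $0$ read as $\mu$) is a permutation of the same set, so $\{\nu_j^k\}_{j=1}^\mu = \{\nu_j\}_{j=1}^\mu$ as multisets. Applying \eqref{a1} from the proof of Theorem \ref{3-T6} with $a = \o_\a^k$ and $m = \mu$ yields $\mu/(\o_\a^{k\mu} - 1)$; the hypothesis $a^m \neq 1$ holds because the pairwise coprimality gives $\gcd(\a k\mu, h) = 1$, which together with $h \geq 2$ forces $\o_\a^{k\mu} \neq 1$. The companion single sum is treated identically using $(h,\mu)=1$ and $\gcd(\b h \mu, k)=1$, producing $\mu/(\xi_\b^{h\mu} - 1)$. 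Assembling the pieces and factoring out $\mu$ reproduces the stated right-hand side. I do not anticipate a genuine obstacle; the only points of care are the relabeling bijection (so that the $j = \mu$ term, where $\nu_\mu = 1$, is correctly retained) and verifying non-vanishing of the denominators $\o_\a^{k\mu}-1$ and $\xi_\b^{h\mu}-1$, both of which follow cleanly from the stated coprimality hypotheses.
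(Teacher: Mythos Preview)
Your proposal is correct and follows essentially the same approach as the paper's proof: both convert the cotangents to the form $1+2/(x-1)$, expand the product, and then evaluate the two linear sums via \eqref{a1}. You are simply more explicit than the paper about the permutation step $\{\nu_j^k\}_{j=1}^\mu=\{\nu_j\}_{j=1}^\mu$ and about checking the nonvanishing of $\o_\a^{k\mu}-1$ and $\xi_\b^{h\mu}-1$, which the paper leaves implicit when it invokes \eqref{a1}.
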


\begin{proof}
Similarly to the proof of Theorem \ref{3-T2}, we have
\begin{align*}
&\sum_{j=1}^{\mu}\cot\left(k\Big(\frac{j}{\mu}+\frac{\a}{h}\Big)\pi\right)
\cot\left(h\Big(\frac{j}{k}+\frac{\b}{k}\Big)\pi\right)=
-\sum_{j=1}^{\mu}\frac{(\nu_{j}^{k}\o_{\a}^{k}+1)}{(\nu_{j}^{k}\o_{\a}^{k}-1)}
\frac{(\nu_j^h\xi_{\b}^{h}+1)}{(\nu_j^h\xi_{\b}^{h}-1)}\\
&=-\sum_{j=1}^{\mu}\left(1+\frac{2}{\nu_{j}^{k}\o_{\a}^{k}-1}\right)\left(1+\frac{2}{\nu_j^h\xi_{\b}^{h}-1}\right)\\
&=-\sum_{j=1}^{\mu}\left\{1+\frac{2}{\nu_{j}^{k}\o_{\a}^{k}-1}+\frac{2}{\nu_j^h\xi_{\b}^{h}-1}+
\frac{4}{(\nu_{j}^{k}\o_{\a}^{k}-1)(\nu_j^h\xi_{\b}^{h}-1)} \right\}\\
&=-\mu\left(1+\frac{2}{\o_{\a}^{k\mu}-1}+\frac{2}{\xi_{\b}^{h\mu}-1}\right)-
4\sum_{j=1}^{\mu}\frac{1}{(\nu_{j}^{k}\o_{\a}^{k}-1)(\nu_j^h\xi_{\b}^{h}-1)},
\end{align*}
where we used \eqref{a1} for the last equality.
\end{proof}

\begin{theorem}[Three Sum Relation]\label{3-T8}
Let $h, k$ and $\mu$ be pairwise relatively prime positive integers with $h, k \geq 2,$ 
and let $\xi_n=e^{2\pi in/k},$ $\o_n=e^{2\pi in/h}$ and $\nu_n=e^{2\pi in/\mu}.$
For positive integers $\a$ and $\b$ such that  $(\a, h)=(\b, k)=1,$
\begin{align*}
&h\mu\sum_{j=1}^{k-1}\frac{1}{(\xi^{\mu}_{j+\b}\o^{\mu}_{\a}-1)(\xi_j^h-1)}
+k\mu\sum_{j=1}^{h-1}\frac{1}{(\xi^{\mu}_{\b}\o^{\mu}_{\a+j}-1)(\o_j^k-1)}\\
&\qquad +hk\sum_{j=1}^{\mu}\frac{1}{(\o_{\a}^{k}\nu_{j}^{k}-1)(\xi_{\b}^{h}\nu_j^h-1)}\\
&=-hk\mu\Big(\frac{1}{\o_{\a}^{k\mu}-1}+\frac{1}{\xi_{\b}^{h\mu}-1}\Big)
+\mu\Big(\mu+\frac{k}{2}+\frac{h}{2}\Big)\frac{1}{\xi_{\b}^{\mu}\o_{\a}^{\mu}-1}
+\frac{\mu^2}{(\xi_{\b}^{\mu}\o_{\a}^{\mu}-1)^2}.
\end{align*}
\end{theorem}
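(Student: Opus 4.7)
The plan is to mimic the proof of Theorem \ref{3-T1}, adapted to incorporate the shift parameters $\a$ and $\b$. Let $A$, $B$, $C$ denote the three sums appearing on the left-hand side (without the leading factors $h\mu$, $k\mu$, $hk$, respectively). Setting $c = \xi_\b\o_\a$, we have $\xi_{j+\b}^\mu\o_\a^\mu = (c\xi_j)^\mu$, together with $c^k = \o_\a^k$ and $c^h = \xi_\b^h$, since $\xi_\b^k = \o_\a^h = 1$. I would expand $\tfrac{\mu}{(c\xi_j)^\mu-1}$ and $\tfrac{h}{\xi_j^h-1}$ via the identity $\tfrac{m}{y^m-1} = \sum_{i=0}^{m-1}\tfrac{\rho_i}{y-\rho_i}$ used in \eqref{xk1}, separating the $i=0$ (or $\ell=0$) term in each factor. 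This writes $h\mu A$ as a sum $J_1+J_2+J_3+J_4$ exactly analogous to the decomposition $I_1+I_2+I_3+I_4$ in the proof of Theorem \ref{3-T1}, with $J_1=\sum_{j=1}^{k-1}\tfrac{1}{(c\xi_j-1)(\xi_j-1)}$ and $J_2,J_3,J_4$ the three cross terms.

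Each $J_r$ is then evaluated using the elementary partial-fraction decomposition
$$\frac{1}{(c\xi_j-u)(\xi_j-v)}=\frac{1}{cv-u}\left(\frac{c}{c\xi_j-u}-\frac{1}{\xi_j-v}\right),$$
which reduces every double sum to a single sum in $\xi_j$. These single sums are computable in closed form from Lemma \ref{3-L1}, \eqref{xk2}, and the identity $\sum_{j=0}^{k-1}(c\xi_j-a)^{-1}=k\,a^{k-1}/(a^k-c^k)$ that follows from \eqref{xam}. The shift relation $c^k=\o_\a^k$ converts the resulting closed forms into the denominators appearing on the right-hand side. The parallel expansion for $k\mu B$ is obtained by the symmetry $h\leftrightarrow k$, $\a\leftrightarrow\b$, and for $hkC$ by treating $\o_\a\nu_j$ and $\xi_\b\nu_j$ as the analogs of $c\xi_j$.

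Adding the three expansions, the mixed cross-sums that couple two of the three index sets cancel in pairs, in exactly the fashion in which $I_2,I_3,I_4$ telescope in the proof of Theorem \ref{3-T1}. What remains are (i) constant contributions that combine into the polynomial part of the right-hand side, (ii) the singleton terms $\tfrac{1}{\o_\a^{k\mu}-1}$ and $\tfrac{1}{\xi_\b^{h\mu}-1}$ that come out of the closed-form evaluation of single sums, and (iii) the shift-dependent terms $\tfrac{1}{\xi_\b^\mu\o_\a^\mu-1}$ and $\tfrac{1}{(\xi_\b^\mu\o_\a^\mu-1)^2}$ arising from $J_1$ and its analogs in the $B$ and $C$ expansions. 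The quadratic denominator appears precisely once, when a subsum of the form $\sum_{j=1}^{k-1}(c\xi_j-1)^{-2}$ is evaluated via \eqref{a2}.

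The principal obstacle is the book-keeping: three separate four-piece expansions produce twelve pieces in total, and several of them contribute boundary terms that must be combined delicately to match the precise coefficients $\mu(\mu+\tfrac{k}{2}+\tfrac{h}{2})$ and $\mu^2$ on the right. As a sanity check and simplification, one may use Theorems \ref{3-T6} and \ref{3-T7} to re-express $h\mu A+k\mu B+hkC$ in terms of the modified Dedekind-type sums $S_{\a,\b}(h,k\,|\,\mu)$, $S_{\b,\a}(k,h\,|\,\mu)$, and the cotangent-cotangent sum of Theorem \ref{3-T7}, plus closed-form constants and the boundary terms in $\tfrac{1}{\o_\a^{k\mu}-1}$ and $\tfrac{1}{\xi_\b^{h\mu}-1}$; matching this reformulation against the claimed right-hand side then reduces the proof to a single three-sum reciprocity among these modified Dedekind sums, analogous to the one from \cite{Berndt0, Berndt}, with markedly more transparent term-by-term tracking.
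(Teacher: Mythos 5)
Your toolkit is the right one (the root-of-unity expansion \eqref{a1}, partial fractions, Lemma \ref{3-L1}, and \eqref{a2} for the squared denominator), but the decisive step is missing. The claim that, after expanding all three sums, the mixed cross-sums ``cancel in pairs, in exactly the fashion in which $I_2,I_3,I_4$ telescope in the proof of Theorem \ref{3-T1}'' misreads that proof and is nowhere established. In Theorem \ref{3-T1} only the single sum $p\mu\sum_n$ is expanded; its cross pieces do not cancel away --- they \emph{produce} the other two sums of the relation, and a leftover pair still requires the reciprocity of Lemma \ref{3-L2}. The paper's proof of the present theorem has the same shape: only the first sum is expanded, via \eqref{a1}, into the triple sum $\sum_{j=1}^{k-1}\sum_{\ell=1}^{\mu}\frac{1}{\xi_{j+\beta}\omega_{\alpha}\nu_{\ell}-1}\sum_{t=1}^{h}\frac{1}{\xi_j\omega_t-1}$ of \eqref{3-T81}, partial fractions are taken in $\xi_j$, and the surviving double sums are exactly $-k\mu\sum_{t=1}^{h-1}\frac{1}{(\xi_{\beta}^{\mu}\omega_{\alpha+t}^{\mu}-1)(\omega_{t}^k-1)}$ and $-hk\sum_{\ell=1}^{\mu}\frac{1}{(\xi_{\beta}^h\nu_{\ell}^h-1)(\omega_{\alpha}^k\nu_{\ell}^k-1)}$, i.e.\ the other two sums of the identity, while the boundary terms computed in \eqref{3-T82}--\eqref{3-T83} (where \eqref{a2} creates the $\mu^2/(\xi_{\beta}^{\mu}\omega_{\alpha}^{\mu}-1)^2$ term) give the right-hand side. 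If you instead expand all three sums and add, the best you could hope for is three times the left side equal to a sum of three closed forms, and you would still have to prove that expanding $hk\sum_{j=1}^{\mu}\frac{1}{(\omega_{\alpha}^{k}\nu_{j}^{k}-1)(\xi_{\beta}^{h}\nu_j^h-1)}$ --- which is \emph{not} obtained from the first sum by the $h\leftrightarrow k$, $\alpha\leftrightarrow\beta$ symmetry, since $\mu$ plays a distinguished role --- reproduces minus the other two sums with the correct closed form; nothing in your outline does this. Moreover your $J_1=\sum_{j}\frac{1}{(c\xi_j-1)(\xi_j-1)}$ yields the denominators $\xi_{\beta}\omega_{\alpha}-1$ and $\omega_{\alpha}^{k}-1$, not the $\mu$-th-power denominators appearing on the right side, so a further resummation over $\nu_{\ell}$ is required that the outline never performs; and the quoted identity $\sum_{j}(c\xi_j-a)^{-1}=k\,a^{k-1}/(a^k-c^k)$ has the wrong sign (it equals $k\,a^{k-1}/(c^k-a^k)$), which is fatal in a computation whose entire content is the exact coefficients $\mu\bigl(\mu+\tfrac{k}{2}+\tfrac{h}{2}\bigr)$ and $\mu^2$.

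The proposed fallback is also circular: re-expressing the three sums through Theorems \ref{3-T6} and \ref{3-T7} and then invoking ``a single three-sum reciprocity among these modified Dedekind sums'' assumes precisely Theorem \ref{3-T9}, which the paper deduces \emph{from} the present theorem; the reciprocity in \cite{Berndt0, Berndt} covers only $\mu=1$, and the sums $S_{\alpha,\beta}(h,k\,|\,\mu)$ are introduced in this paper, so no external source supplies that input. In short, the partial-fraction and roots-of-unity machinery you list can indeed prove the theorem --- that is what the paper does, by expanding the first sum alone --- but the specific cancellation scheme on which your argument rests is unsubstantiated, and the backup route begs the question, so as written the proposal has a genuine gap.
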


\begin{proof}
By \eqref{a1} and Lemma \ref{3-L1}, we find that
\begin{align} \label{3-T81}
&h\mu\sum_{j=1}^{k-1}\frac{1}{(\xi^{\mu}_{j+\b}\o^{\mu}_{\a}-1)(\xi_j^h-1)}
=\sum_{j=1}^{k-1}\sum_{\ell=1}^{\mu}\frac{1}{\xi_{j+\b}\o_{\a}\nu_{\ell}-1}\sum_{t=1}^{h}\frac{1}{\xi_j\o_t-1} \notag\\
&=-\sum_{j=1}^{k-1}\sum_{\ell=1}^{\mu}\sum_{t=1}^{h}\frac{1}{\xi_{\b}\o_{\a}\nu_{\ell}-\o_t}
\left\{\frac{\xi_{\b}\o_{\a}\nu_{\ell}}{\xi_{j+\b}\o_{\a}\nu_{\ell}-1}-\frac{\o_t}{\xi_j\o_t-1}\right\}\notag\\
&=-\sum_{\ell=1}^{\mu}\sum_{t=1}^{h}\frac{\xi_{\b}\o_{\a}\nu_{\ell}}{\xi_{\b}\o_{\a}\nu_{\ell}-\o_t}
\sum_{j=1}^{k-1}\frac{1}{\xi_{j+\b}\o_{\a}\nu_{\ell}-1}
+\sum_{\ell=1}^{\mu}\sum_{t=1}^{h}\frac{\o_{t}}{\xi_{\b}\o_{\a}\nu_{\ell}-\o_t}
\sum_{j=1}^{k-1}\frac{1}{\xi_j\o_t-1}\notag\\
&=-\sum_{\ell=1}^{\mu}\left\{h+\sum_{t=1}^{h}\frac{1}{\xi_{\b}\o_{\a-t}\nu_{\ell}-1} \right\}
\left\{\frac{k}{\o_{\a}^k\nu_{\ell}^k-1}-\frac{1}{\xi_{\b}\o_{\a}\nu_{\ell}-1}\right\}\notag\\
&\quad+\sum_{\ell=1}^{\mu}\sum_{t=1}^{h-1}\frac{1}{\xi_{\b}\o_{\a-t}\nu_{\ell}-1}
\Big(\frac{k}{\o_t^k-1}-\frac{1}{\o_t-1}\Big)
+\sum_{\ell=1}^{\mu}\frac{1}{\xi_{\b}\o_{\a}\nu_{\ell}-1}\sum_{j=1}^{k-1}\frac{1}{\xi_j-1} \notag\\
&=-\sum_{\ell=1}^{\mu}\frac{hk}{\o_{\a}^k\nu_{\ell}^k-1}+\sum_{\ell=1}^{\mu}\frac{h}{\xi_{\b}\o_{\a}\nu_{\ell}-1}
-\sum_{\ell=1}^{\mu}\frac{hk}{(\xi_{\b}^h\nu_{\ell}^h-1)(\o_{\a}^k\nu_{\ell}^k-1)} \notag\\
&\quad +\sum_{\ell=1}^{\mu}\frac{h}{(\xi_{\b}^h\nu_{\ell}^h-1)(\xi_{\b}\o_{\a}\nu_{\ell}-1)}
+\sum_{t=1}^{h-1}\Big(\frac{k}{\o_t^k-1}-\frac{1}{\o_t-1}\Big)\sum_{\ell=1}^{\mu}\frac{1}{\xi_{\b}\o_{\a-t}\nu_{\ell}-1}\notag\\
&\qquad \qquad -\frac{\mu}{(\xi_{\b}^{\mu}\o_{\a}^{\mu}-1)}\frac{(k-1)}{2}\notag\\
&=-\frac{hk\mu}{\o_{\a}^{k\mu}-1}+\frac{h\mu}{\xi_{\b}^{\mu}\o_{\a}^{\mu}-1}
-\sum_{\ell=1}^{\mu}\frac{hk}{(\xi_{\b}^h\nu_{\ell}^h-1)(\o_{\a}^k\nu_{\ell}^k-1)}
+\sum_{\ell=1}^{\mu}\frac{h}{(\xi_{\b}^h\nu_{\ell}^h-1)(\xi_{\b}\o_{\a}\nu_{\ell}-1)}\notag\\
&\quad +\sum_{t=1}^{h-1}\Big(\frac{k}{\o_t^k-1}-\frac{1}{\o_t-1}\Big)\frac{\mu}{(\xi_{\b}^{\mu}\o_{\a-t}^{\mu}-1)}
-\frac{\mu(k-1)}{2(\xi_{\b}^{\mu}\o_{\a}^{\mu}-1)}.
\end{align}
Observe that the last summation in \eqref{3-T81} is
\begin{align}\label{3-T82}
&\sum_{t=1}^{h-1}\Big(\frac{k}{\o_t^k-1}-\frac{1}{\o_t-1}\Big)\frac{\mu}{(\xi_{\b}^{\mu}\o_{\a-t}^{\mu}-1)}
=\sum_{t=1}^{h-1}\Big(\frac{k}{\o_{-t}^k-1}-\frac{1}{\o_{-t}-1}\Big)\frac{\mu}{(\xi_{\b}^{\mu}\o_{\a+t}^{\mu}-1)}\notag\\
&=-\sum_{t=1}^{h-1}\Big(\frac{k\o_{t}^k}{\o_{t}^k-1}-\frac{\o_{t}}{\o_{t}-1}\Big)\frac{\mu}{(\xi_{\b}^{\mu}\o_{\a+t}^{\mu}-1)}\notag\\
&=-\sum_{t=1}^{h-1}\Big(k-1+\frac{k}{\o_{t}^k-1}-\frac{1}{\o_{t}-1}\Big)\frac{\mu}{(\xi_{\b}^{\mu}\o_{\a+t}^{\mu}-1)}\notag\\
&=-(k-1)\mu\sum_{t=1}^{h-1}\frac{1}{(\xi_{\b}^{\mu}\o_{\a+t}^{\mu}-1)}
-k\mu\sum_{t=1}^{h-1}\frac{1}{(\xi_{\b}^{\mu}\o_{\a+t}^{\mu}-1)(\o_{t}^k-1)}\notag\\
&\qquad +\mu\sum_{t=1}^{h-1}\frac{1}{(\xi_{\b}^{\mu}\o_{\a+t}^{\mu}-1)(\o_{t}-1)}\notag\\
&=-(k-1)\mu\Big(\frac{h}{\xi_{\b}^{h\mu}-1}-\frac{1}{\xi_{\b}^{\mu}\o_{\a}^{\mu}-1}\Big)
-k\mu\sum_{t=1}^{h-1}\frac{1}{(\xi_{\b}^{\mu}\o_{\a+t}^{\mu}-1)(\o_{t}^k-1)}\notag\\
&\qquad +\mu\sum_{t=1}^{h-1}\frac{1}{(\xi_{\b}^{\mu}\o_{\a+t}^{\mu}-1)(\o_{t}-1)}.
\end{align}
Also,
\begin{align}\label{3-T83}
&\mu\sum_{t=1}^{h-1}\frac{1}{(\xi_{\b}^{\mu}\o_{\a+t}^{\mu}-1)(\o_{t}-1)}
=\sum_{t=1}^{h-1}\frac{1}{\o_{t}-1}\sum_{\ell=1}^{\mu}\frac{1}{\xi_{\b}\o_{\a+t}\nu_{\ell}-1}\notag\\
&=\sum_{\ell=1}^{\mu}\sum_{t=1}^{h-1}\left\{\frac{1}{\o_{t}-1}
-\frac{\xi_{\b}\o_{\a}\nu_{\ell}}{\xi_{\b}\o_{\a+t}\nu_{\ell}-1}\right\}\frac{1}{\xi_{\b}\o_{\a}\nu_{\ell}-1} \notag\\
&=-\frac{(h-1)\mu}{2(\xi_{\b}^{\mu}\o_{\a}^{\mu}-1)}
-\sum_{\ell=1}^{\mu}\left(1+\frac{1}{\xi_{\b}\o_{\a}\nu_{\ell}-1}\right)\sum_{t=1}^{h-1}\frac{1}{\xi_{\b}\o_{\a+t}\nu_{\ell}-1}\notag\\
&=-\frac{(h-1)\mu}{2(\xi_{\b}^{\mu}\o_{\a}^{\mu}-1)}
-\sum_{\ell=1}^{\mu}\left(1+\frac{1}{\xi_{\b}\o_{\a}\nu_{\ell}-1}\right)
\left(\frac{h}{\xi_{\b}^h\nu_{\ell}^h-1}-\frac{1}{\xi_{\b}\o_{\a}\nu_{\ell}-1}\right)\notag\\
&=-\frac{(h-1)\mu}{2(\xi_{\b}^{\mu}\o_{\a}^{\mu}-1)}-\frac{h\mu}{\xi_{\b}^{h\mu}-1}
+\frac{\mu}{\xi_{\b}^{\mu}\o_{\a}^{\mu}-1}-\sum_{\ell=1}^{\mu}\frac{h}{(\xi_{\b}^h\nu_{\ell}^h-1)(\xi_{\b}\o_{\a}\nu_{\ell}-1)}\notag\\
&\qquad +\sum_{\ell=1}^{\mu}\frac{1}{(\xi_{\b}\o_{\a}\nu_{\ell}-1)^2}\notag\\
&=-\frac{(h-1)\mu}{2(\xi_{\b}^{\mu}\o_{\a}^{\mu}-1)}-\frac{h\mu}{\xi_{\b}^{h\mu}-1}
+\frac{\mu}{\xi_{\b}^{\mu}\o_{\a}^{\mu}-1}-\sum_{\ell=1}^{\mu}\frac{h}{(\xi_{\b}^h\nu_{\ell}^h-1)(\xi_{\b}\o_{\a}\nu_{\ell}-1)}\notag\\
&\qquad+\frac{\mu(\mu-1)}{\xi_{\b}^{\mu}\o_{\a}^{\mu}-1}+\frac{\mu^2}{(\xi_{\b}^{\mu}\o_{\a}^{\mu}-1)^2},
\end{align}
where we used \eqref{a2} for the last equality.

Putting \eqref{3-T83} into \eqref{3-T82} yields
\begin{align}\label{3-T84}
&\sum_{t=1}^{h-1}\Big(\frac{k}{\o_t^k-1}-\frac{1}{\o_t-1}\Big)\frac{\mu}{(\xi_{\b}^{\mu}\o_{\a-t}^{\mu}-1)} \notag\\
&=-\frac{kh\mu}{\xi_{\b}^{h\mu}-1}+\frac{\mu(2k-h+2\mu-1)}{2(\xi_{\b}^{\mu}\o_{\a}^{\mu}-1)}
-k\mu\sum_{t=1}^{h-1}\frac{1}{(\xi_{\b}^{\mu}\o_{\a+t}^{\mu}-1)(\o_{t}^k-1)}\notag\\
&\qquad -\sum_{\ell=1}^{\mu}\frac{h}{(\xi_{\b}^h\nu_{\ell}^h-1)(\xi_{\b}\o_{\a}\nu_{\ell}-1)}
+\frac{\mu^2}{(\xi_{\b}^{\mu}\o_{\a}^{\mu}-1)^2}.
\end{align}
Lastly, using \eqref{3-T84} in \eqref{3-T81}, we arrive at
\begin{align*}
&h\mu\sum_{j=1}^{k-1}\frac{1}{(\xi^{\mu}_{j+\b}\o^{\mu}_{\a}-1)(\xi_j^h-1)}
=-\frac{hk\mu}{\o_{\a}^{k\mu}-1}-\frac{hk\mu}{\xi_{\b}^{h\mu}-1}+\frac{\mu(k+h+2\mu)}{2(\xi_{\b}^{\mu}\o_{\a}^{\mu}-1)}
+\frac{\mu^2}{(\xi_{\b}^{\mu}\o_{\a}^{\mu}-1)^2}\notag\\
&\qquad -k\mu\sum_{t=1}^{h-1}\frac{1}{(\xi_{\b}^{\mu}\o_{\a+t}^{\mu}-1)(\o_{t}^k-1)}
-hk\sum_{\ell=1}^{\mu}\frac{1}{(\xi_{\b}^h\nu_{\ell}^h-1)(\o_{\a}^k\nu_{\ell}^k-1)},
\end{align*}
which completes our proof.
\end{proof}

\begin{theorem}[Reciprocity Theorem]\label{3-T9}
Let $h, k$ and $\mu$ be pairwise relatively prime positive integers with $h, k \geq 2,$ 
and let $\xi_n=e^{2\pi in/k},$ $\o_n=e^{2\pi in/h}$ and $\nu_n=e^{2\pi in/\mu}.$
For positive integers $\a$ and $\b$ such that  $(\a, h)=(\b, k)=1,$
\begin{align*}
4hk\mu \Big(S_{\a, \b}(h, k | \mu)+S_{\b, \a}(k, h | \mu)\Big)
=&\mu^2\csc^2\left(\mu\Big(\frac{\a}{h}+\frac{\b}{k}\Big)\pi\right)-hk\mu\notag\\
&-hk\sum_{j=1}^{\mu}\cot\left(k\Big(\frac{j}{\mu}+\frac{\a}{h}\Big)\pi\right)
\cot\left(h\Big(\frac{j}{\mu}+\frac{\b}{k}\Big)\pi\right).
\end{align*}
\end{theorem}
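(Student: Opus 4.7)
The plan is to combine the three preceding results, Theorems \ref{3-T6}, \ref{3-T7} and \ref{3-T8}, in such a way that the ``internal'' rational sums over roots of unity cancel, and only the boundary terms survive, which then collapse into a $\csc^2$ via a short trigonometric identification.

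First I would apply Theorem \ref{3-T6} once as stated (giving $4kS_{\a,\b}(h,k\mid\mu)$) and once with the roles of $(h,k,\a,\b)$ swapped (giving $4hS_{\b,\a}(k,h\mid\mu)$), then multiply the first identity by $h\mu$ and the second by $k\mu$ and add. This produces
\begin{align*}
4hk\mu\bigl(S_{\a,\b}(h,k\mid\mu)+S_{\b,\a}(k,h\mid\mu)\bigr)
&=\tfrac{2(h+k)\mu}{\xi_\b^\mu\o_\a^\mu-1}-\tfrac{2hk\mu}{\o_\a^{k\mu}-1}-\tfrac{2hk\mu}{\xi_\b^{h\mu}-1}\\
&\quad-4\Bigl(h\mu\!\!\sum_{j=1}^{k-1}\tfrac{1}{(\xi^\mu_{j+\b}\o^\mu_\a-1)(\xi_j^h-1)}
+k\mu\!\!\sum_{j=1}^{h-1}\tfrac{1}{(\xi^\mu_\b\o^\mu_{\a+j}-1)(\o_j^k-1)}\Bigr).
\end{align*}

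Next I would invoke Theorem \ref{3-T8} to rewrite the bracketed pair of double sums as the third sum $hk\sum_{j=1}^{\mu}\frac{1}{(\o_\a^k\nu_j^k-1)(\xi_\b^h\nu_j^h-1)}$ plus the explicit right-hand side of that theorem. This eliminates the rational sums over residues modulo $k$ and $h$ in favor of a single sum over residues modulo $\mu$ and some simple boundary contributions. Then I would apply Theorem \ref{3-T7} to convert $4hk\sum_{j=1}^{\mu}\frac{1}{(\o_\a^k\nu_j^k-1)(\xi_\b^h\nu_j^h-1)}$ into the cotangent product $-hk\sum_{j=1}^{\mu}\cot\!\bigl(k(\tfrac{j}{\mu}+\tfrac{\a}{h})\pi\bigr)\cot\!\bigl(h(\tfrac{j}{\mu}+\tfrac{\b}{k})\pi\bigr)$ together with explicit terms involving $\frac{1}{\o_\a^{k\mu}-1}$ and $\frac{1}{\xi_\b^{h\mu}-1}$.

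After this substitution the terms containing $\frac{1}{\o_\a^{k\mu}-1}$ and $\frac{1}{\xi_\b^{h\mu}-1}$ should cancel exactly, and similarly the linear coefficients in front of $\frac{1}{\xi_\b^\mu\o_\a^\mu-1}$ will cancel, leaving only
\begin{equation*}
-\frac{4\mu^2}{\xi_\b^\mu\o_\a^\mu-1}-\frac{4\mu^2}{(\xi_\b^\mu\o_\a^\mu-1)^2}
\;=\;-\frac{4\mu^2\,\xi_\b^\mu\o_\a^\mu}{(\xi_\b^\mu\o_\a^\mu-1)^2}
\end{equation*}
together with $-hk\mu$ and the cotangent sum. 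The last step is the standard identity: writing $z=e^{i\theta}$ with $\theta=2\pi\mu(\tfrac{\a}{h}+\tfrac{\b}{k})$, we have $(z-1)^2=-4z\sin^2(\theta/2)$, hence $-\frac{4\mu^2 z}{(z-1)^2}=\mu^2\csc^2\!\bigl(\mu(\tfrac{\a}{h}+\tfrac{\b}{k})\pi\bigr)$, which is exactly the first term on the right in Theorem \ref{3-T9}.

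I expect the main obstacle to be purely organizational: the three intermediate identities each produce several boundary terms of the forms $\frac{1}{\o_\a^{k\mu}-1}$, $\frac{1}{\xi_\b^{h\mu}-1}$, $\frac{1}{\xi_\b^\mu\o_\a^\mu-1}$, and $\frac{1}{(\xi_\b^\mu\o_\a^\mu-1)^2}$, and one must verify that after all three substitutions the coefficients of the first three types vanish while leaving the correct multiple of the quadratic pole and the constant $-hk\mu$. No new trigonometric work is needed beyond the final $\csc^2$ identification.
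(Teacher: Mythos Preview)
Your proposal is correct and follows essentially the same route as the paper's proof: combine Theorem~\ref{3-T6} (applied once as stated and once with $(h,k,\a,\b)$ swapped), then substitute Theorem~\ref{3-T8} to trade the two rational sums for the third sum plus boundary terms, and finally apply Theorem~\ref{3-T7} to convert that third sum into the cotangent product. The paper records exactly the intermediate expression you predict, $-\frac{4\mu^2}{\xi_\b^\mu\o_\a^\mu-1}-\frac{4\mu^2}{(\xi_\b^\mu\o_\a^\mu-1)^2}+\frac{2hk\mu}{\o_\a^{k\mu}-1}+\frac{2hk\mu}{\xi_\b^{h\mu}-1}+4hk\sum_{j=1}^\mu\frac{1}{(\o_\a^k\nu_j^k-1)(\xi_\b^h\nu_j^h-1)}$, and then finishes with the same $\csc^2$ identification you describe.
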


\begin{proof}
By Theorems \ref{3-T6}, \ref{3-T7} and \ref{3-T8}, we can easily derive
\begin{align*}
&4hk\mu\Big(S_{\a, \b}(h, k | \mu)+S_{\b, \a}(k, h | \mu)\Big)=-\frac{4\mu^2}{\xi_{\b}^{\mu}\o_{\a}^{\mu}-1}
-\frac{4\mu^2}{(\xi_{\b}^{\mu}\o_{\a}^{\mu}-1)^2} \notag\\
&\qquad+\frac{2hk\mu}{\o_{\a}^{k\mu}-1}+\frac{2hk\mu}{\xi_{\b}^{h\mu}-1}
+4hk\sum_{j=1}^{\mu}\frac{1}{(\o_{\a}^{k}\nu_{j}^{k}-1)(\xi_{\b}^{h}\nu_j^h-1)}\notag\\
&=-4\mu^2\frac{\xi_{\b}^{\mu}\o_{\a}^{\mu}}{(\xi_{\b}^{\mu}\o_{\a}^{\mu}-1)^2}
-hk\mu-hk\sum_{j=1}^{\mu}\cot\left(k\Big(\frac{j}{\mu}+\frac{\a}{h}\Big)\pi\right)
\cot\left(h\Big(\frac{j}{\mu}+\frac{\b}{k}\Big)\pi\right)\notag\\
&=\mu^2\csc^2\left(\mu\Big(\frac{\a}{h}+\frac{\b}{k}\Big)\pi\right)-hk\mu
-hk\sum_{j=1}^{\mu}\cot\left(k\Big(\frac{j}{\mu}+\frac{\a}{h}\Big)\pi\right)
\cot\left(h\Big(\frac{j}{\mu}+\frac{\b}{k}\Big)\pi\right).
\end{align*}
\end{proof}

\begin{corollary}[Theorem 6.2 in \cite{Berndt}]
Let $\a, \b, h$ and $k$ be positive integers with $(h, k)=(\a, h)=(\b, k)=1$ and $h, k \geq 2.$
Then,
\begin{align*}
4hk\Big(S_{\a, \b}(h, k)+S_{\b, \a}(k, h)\Big)=\csc^2\left(\pi\Big(\frac{\a}{h}+\frac{\b}{k}\Big)\right)
-hk\left(1+\cot\Big(\frac{k\a\pi}{h}\Big)\cot\Big(\frac{h\b\pi}{k}\Big)\right).
\end{align*}
\end{corollary}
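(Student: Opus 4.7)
The corollary is stated as a consequence of the Reciprocity Theorem (Theorem \ref{3-T9}), so the natural plan is to specialize that result to $\mu=1$ and check that the two sides reduce to the claimed identity.

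First, I would verify the definitional compatibility: when $\mu=1$, the generalized sum collapses to the classical one, since
\[
S_{\a,\b}(h,k\mid 1)=\frac{1}{4k}\sum_{j=1}^{k-1}\cot\!\left(\Big(\frac{j}{k}+\frac{\a}{h}+\frac{\b}{k}\Big)\pi\right)\cot\!\left(\frac{jh}{k}\pi\right)=S_{\a,\b}(h,k),
\]
by the representation \eqref{cotcot}. Thus $S_{\a,\b}(h,k\mid 1)+S_{\b,\a}(k,h\mid 1)=S_{\a,\b}(h,k)+S_{\b,\a}(k,h)$ on the left-hand side of Theorem \ref{3-T9}.

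Next, I would simplify the right-hand side of Theorem \ref{3-T9} at $\mu=1$. The term $\mu^2\csc^2(\mu(\a/h+\b/k)\pi)$ becomes $\csc^2(\pi(\a/h+\b/k))$, and $-hk\mu$ becomes $-hk$. The sum over $j=1,\dots,\mu$ degenerates to the single term $j=1$, yielding
\[
-hk\cot\!\left(k\Big(1+\frac{\a}{h}\Big)\pi\right)\cot\!\left(h\Big(1+\frac{\b}{k}\Big)\pi\right).
\]
Since $\cot$ has period $\pi$ and $k,h$ are integers, this reduces to $-hk\cot(k\a\pi/h)\cot(h\b\pi/k)$. Combining the three terms and factoring gives exactly
\[
\csc^2\!\left(\pi\Big(\frac{\a}{h}+\frac{\b}{k}\Big)\right)-hk\!\left(1+\cot\!\Big(\frac{k\a\pi}{h}\Big)\cot\!\Big(\frac{h\b\pi}{k}\Big)\right),
\]
which is the claimed right-hand side. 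Finally, dividing the identity $4hk\cdot 1\cdot(S_{\a,\b}(h,k\mid 1)+S_{\b,\a}(k,h\mid 1))=\cdots$ by the factor $\mu=1$ (which is trivial here) completes the proof.

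There is essentially no obstacle in this reduction; the only point that requires care is the periodicity argument that converts the cotangents at $j=\mu=1$ back into $\cot(k\a\pi/h)$ and $\cot(h\b\pi/k)$, together with the observation that these cotangents are well defined because $(\a,h)=(\b,k)=1$ with $h,k\geq 2$. The real work has already been done in Theorems \ref{3-T6}, \ref{3-T7}, and \ref{3-T8}, which feed into Theorem \ref{3-T9}; the corollary is simply the $\mu=1$ specialization that recovers Berndt's classical reciprocity formula for the modified Dedekind sum.
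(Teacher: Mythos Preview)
Your proposal is correct and follows exactly the paper's approach: the paper's entire proof is ``Set $\mu=1$ in Theorem \ref{3-T9},'' and you have simply written out the details of this specialization, including the periodicity reduction of the cotangent term.
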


\begin{proof}
Set $\mu=1$ in Theorem \ref{3-T9}.
\end{proof}

Next, we provide further analogous reciprocity theorems and three sum relations.
Define for a positive integer $\mu$
\begin{align*}
T_{\a, \b}(h, k \mid \mu):=\frac{1}{4k}\sum_{j=1}^{k-1}\tan\left(\mu\Big(\frac{j}{k}+\frac{\a}{h}+\frac{\b}{k}\Big)\pi\right)
\cot\left(\frac{jh}{k}\pi\right).
\end{align*}

\begin{theorem}\label{3-T10}
Let $h, k$ and $\mu$ be  pairwise relatively prime odd positive integers such that $h,k \geq 3,$
and let $\xi_n=e^{2\pi in/k}$ and $\o_n=e^{2\pi in/h}.$
If $\a$ and $\b$ are either $0$ or positive integers such that $(\a, h)=1$ and $(\b, k)=1,$ then
\begin{align*}
4kT_{\a, \b}(h, k | \mu)=\frac{2}{\xi_{\b}^{\mu}\o_{\a}^{\mu}+1}-\frac{2k}{\o^{k\mu}_{\a}+1}
-4\sum_{j=1}^{k-1}\frac{1}{(\xi^{\mu}_{j+\b}\o^{\mu}_{\a}+1)(\xi_j^h-1)}.
\end{align*}
\end{theorem}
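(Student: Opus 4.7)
The plan is to mirror the proof of Theorem \ref{3-T6}, tracking the sign flips that arise when one cotangent is replaced by a tangent. Starting from $\tan(\pi z) = -i(e^{2\pi iz}-1)/(e^{2\pi iz}+1)$ and $\cot(\pi w) = i(e^{2\pi iw}+1)/(e^{2\pi iw}-1)$, the product $\tan(\pi z)\cot(\pi w)$ simplifies to $\frac{(e^{2\pi iz}-1)(e^{2\pi iw}+1)}{(e^{2\pi iz}+1)(e^{2\pi iw}-1)}$, with no leading minus sign (in contrast to the cotangent--cotangent case). Taking $z = \mu(j/k+\alpha/h+\beta/k)$ and $w = jh/k$, the summand of $4kT_{\alpha,\beta}(h,k|\mu)$ becomes
\[
\frac{(\xi_{j+\beta}^{\mu}\omega_{\alpha}^{\mu}-1)(\xi_j^{h}+1)}{(\xi_{j+\beta}^{\mu}\omega_{\alpha}^{\mu}+1)(\xi_j^{h}-1)}.
\]

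Next I would apply the elementary identities $\frac{x-1}{x+1} = 1 - \frac{2}{x+1}$ and $\frac{y+1}{y-1} = 1 + \frac{2}{y-1}$ and expand the product, obtaining a constant piece, a $1/(\xi_j^h-1)$ piece, a $1/(\xi_{j+\beta}^\mu\omega_\alpha^\mu+1)$ piece, and the mixed-product piece. Summing the first two over $j=1,\ldots,k-1$, the constant contributes $k-1$, while Lemma \ref{3-L1} gives $2\sum_{j=1}^{k-1} 1/(\xi_j^h-1) = -(k-1)$ since $(h,k)=1$ allows the reindexing $\xi_j^h \mapsto \xi_j$. These two contributions cancel exactly.

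For the $-2/(\xi_{j+\beta}^\mu\omega_\alpha^\mu+1)$ piece, the coprimality $(\mu,k)=1$ ensures that as $j+\beta$ ranges over the nonzero residues modulo $k$ omitting $\beta$, $\{\xi_{j+\beta}^\mu\}$ is precisely the set of $k$-th roots of unity minus $\xi_\beta^\mu$. The crucial lemma needed here is the $+1$ analogue of \eqref{a1}: setting $x=-1$ in \eqref{xam} yields
\[
\sum_{j=1}^{m}\frac{1}{-1-az_j} \;=\; \frac{m(-1)^{m-1}}{(-1)^m-a^m},
\]
and for $m$ odd this collapses to $\sum_{j=1}^{m} 1/(1+az_j) = m/(1+a^m)$. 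Applied with $m=k$ and $a=\omega_\alpha^\mu$, one gets $\sum_{n=0}^{k-1} 1/(\xi_n^\mu\omega_\alpha^\mu+1) = k/(\omega_\alpha^{k\mu}+1)$, so the full piece evaluates to $-2k/(\omega_\alpha^{k\mu}+1)+2/(\xi_\beta^\mu\omega_\alpha^\mu+1)$. The mixed-product piece is, by definition, $-4\sum_{j=1}^{k-1} 1/[(\xi_{j+\beta}^\mu\omega_\alpha^\mu+1)(\xi_j^h-1)]$, and assembling all three contributions yields the identity.

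The main obstacle is not computational but rather verifying that the parity hypothesis really delivers the $+1$ analogue of \eqref{a1} and simultaneously rules out vanishing denominators. A factor $\xi_\beta^\mu\omega_\alpha^\mu+1$ vanishes only if $2\mu(\beta h+\alpha k)$ is an odd multiple of $hk$, which is impossible because $hk$ is odd; the checks for $\omega_\alpha^{k\mu}+1$ and for $\xi_{j+\beta}^\mu\omega_\alpha^\mu+1$ with $j+\beta\not\equiv\beta\pmod k$ are analogous. Once these parity considerations are in place, the calculation proceeds exactly in parallel with the proof of Theorem \ref{3-T6}.
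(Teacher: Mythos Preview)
Your proposal is correct and follows essentially the same route as the paper: you write the tangent--cotangent product as $\frac{(\xi_{j+\beta}^{\mu}\omega_{\alpha}^{\mu}-1)(\xi_j^{h}+1)}{(\xi_{j+\beta}^{\mu}\omega_{\alpha}^{\mu}+1)(\xi_j^{h}-1)}$, expand via $\bigl(1-\frac{2}{x+1}\bigr)\bigl(1+\frac{2}{y-1}\bigr)$, use Lemma~\ref{3-L1} to cancel the constant and $1/(\xi_j^h-1)$ pieces, and then evaluate the remaining $1/(\xi_{j+\beta}^{\mu}\omega_\alpha^{\mu}+1)$ sum via the $+1$ analogue of \eqref{a1} (the paper labels this \eqref{a+1}), which like you it derives from \eqref{xam} with the oddness of $k$. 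Your additional remark that the parity hypothesis guarantees nonvanishing denominators is a detail the paper leaves implicit.
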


\begin{proof}
Observe that
\begin{align}
4kT_{\a, \b}(h, k | \mu)&=\sum_{j=1}^{k-1}\frac{(\xi_{j+\b}^{\mu}\o_{\a}^{\mu}-1)}{(\xi_{j+\b}^{\mu}\o_{\a}^{\mu}+1)}
\frac{(\xi_j^{h}+1)}{(\xi_j^{h}-1)} \notag\\
&=\sum_{j=1}^{k-1}\Big(1-\frac{2}{\xi_{j+\b}^{\mu}\o_{\a}^{\mu}+1}\Big)\Big(1+\frac{2}{\xi_j^{h}-1}\Big)\notag\\
&=\sum_{j=1}^{k-1}\left\{1-\frac{2}{\xi_{j+\b}^{\mu}\o_{\a}^{\mu}+1}
+\frac{2}{\xi_j^{h}-1}-\frac{4}{(\xi_{j+\b}^{\mu}\o_{\a}^{\mu}+1)(\xi_j^h-1)}\right\}\notag\\
&=-\sum_{j=1}^{k-1}\left\{\frac{2}{\xi_{j+\b}^{\mu}\o_{\a}^{\mu}+1}+\frac{4}{(\xi_{j+\b}^{\mu}\o_{\a}^{\mu}+1)(\xi_j^h-1)}\right\},\label{star}
\end{align}
where we applied Lemma \ref{3-L1} for the last equality.

From \eqref{xam} and \eqref{xam2}, it follows that  for $a^m\neq -1$ and $z_j=e^{2\pi ij/m}$ with $m$ odd,
\begin{align}\label{a+1}
\sum_{j=1}^m\frac{1}{az_j+1}=\frac{m}{a^m+1},
\end{align}
and
\begin{align}\label{a+12}
\sum_{j=1}^m\frac{1}{(az_j+1)^2}=-\frac{m(m-1)}{a^m+1}+\frac{m^2}{(a^m+1)^2}.
\end{align}
Thus, by \eqref{a+1},
\begin{align}
\sum_{j=1}^{k-1}\frac{2}{\xi_{j+\b}^{\mu}\o_{\a}^{\mu}+1}=
\frac{2k}{\o^{k\mu}_{\a}+1}-\frac{2}{\xi_{\b}^{\mu}\o_{\a}^{\mu}+1}.\label{starstar}
\end{align}
Substituting \eqref{starstar} into \eqref{star}, we complete the proof.
\end{proof}

\begin{theorem}\label{3-T11}
Let $k,  h$ and $\mu$ be pairwise relatively prime odd positive integers with $k, h \geq 3,$ 
and let $\xi_n=e^{2\pi in/k},$ $\o_n=e^{2\pi in/h}$ and $\nu_n=e^{2\pi in/\mu}.$
If $\a$ and $\b$ are either $0$ or positive integers such that $(\a, h)=1$ and $(\b, k)=1,$
\begin{align*}
&\sum_{j=1}^{\mu}\tan\left(k\Big(\frac{j}{\mu}+\frac{\a}{h}\Big)\pi\right)
\tan\left(h\Big(\frac{j}{\mu}+\frac{\b}{k}\Big)\pi\right)\\
&\quad =\mu\left(\frac{2}{\o_{\a}^{k\mu}+1}+\frac{2}{\xi_{\b}^{h\mu}+1}-1\right)-
4\sum_{j=1}^{\mu}\frac{1}{(\nu_{j}^{k}\o_{\a}^{k}+1)(\nu_j^h\xi_{\b}^{h}+1)}.
\end{align*}
\end{theorem}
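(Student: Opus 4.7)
My plan is to mimic the proof of Theorem \ref{3-T7}, replacing the cotangent expansion in terms of $(e^{2\pi ix}-1)^{-1}$ by the corresponding tangent expansion in terms of $(e^{2\pi ix}+1)^{-1}$, and then replacing the application of \eqref{a1} by an application of \eqref{a+1}. Concretely, I start from the identity
\begin{equation*}
\tan(\pi x)=-i\,\frac{e^{2\pi ix}-1}{e^{2\pi ix}+1}=-i\left(1-\frac{2}{e^{2\pi ix}+1}\right),
\end{equation*}
and set $A_j=k\bigl(\tfrac{j}{\mu}+\tfrac{\a}{h}\bigr)$, $B_j=h\bigl(\tfrac{j}{\mu}+\tfrac{\b}{k}\bigr)$, so that $e^{2\pi i A_j}=\nu_j^k\o_\a^k$ and $e^{2\pi i B_j}=\nu_j^h\xi_\b^h$.

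With this substitution the product becomes
\begin{equation*}
\tan(\pi A_j)\tan(\pi B_j)=-\left(1-\frac{2}{\nu_j^k\o_\a^k+1}\right)\left(1-\frac{2}{\nu_j^h\xi_\b^h+1}\right),
\end{equation*}
which, when expanded, yields four terms:
\begin{equation*}
-1+\frac{2}{\nu_j^k\o_\a^k+1}+\frac{2}{\nu_j^h\xi_\b^h+1}-\frac{4}{(\nu_j^k\o_\a^k+1)(\nu_j^h\xi_\b^h+1)}.
\end{equation*}
I then sum over $j=1,\ldots,\mu$. The constant term contributes $-\mu$, and the remaining single-index sums
$\sum_{j=1}^{\mu}(\nu_j^k\o_\a^k+1)^{-1}$ and $\sum_{j=1}^{\mu}(\nu_j^h\xi_\b^h+1)^{-1}$
are precisely of the form treated in \eqref{a+1} (with $a=\o_\a^k$ or $a=\xi_\b^h$ and $m=\mu$, where $\mu$ is odd as required), giving $\mu/(\o_\a^{k\mu}+1)$ and $\mu/(\xi_\b^{h\mu}+1)$ respectively. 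The cross term
$\sum_{j=1}^{\mu}\bigl((\nu_j^k\o_\a^k+1)(\nu_j^h\xi_\b^h+1)\bigr)^{-1}$
is left on the right-hand side, as in the statement. Combining everything recovers the asserted identity.

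The main subtlety — and the only non-routine step — is verifying that \eqref{a+1} can be applied, i.e., that $\o_\a^{k\mu}\neq -1$ and $\xi_\b^{h\mu}\neq -1$, and also that no individual summand has a vanishing denominator. Since $(\a,h)=1$ and $\gcd(k\mu,h)=1$, the exponent $k\mu\a/h$ is a fraction in lowest terms with odd denominator $h\ge 3$, so $e^{2\pi i k\mu\a/h}=-1$ would force $h\mid 2$, contradicting $h\ge 3$ odd; the other case is symmetric. An analogous mod-$2$ parity check, using that $h$, $k$, $\mu$ are all odd and pairwise coprime, rules out $\nu_j^k\o_\a^k=-1$ and $\nu_j^h\xi_\b^h=-1$ term by term. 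Once this is checked, the remainder of the argument is a straightforward expansion that parallels the proof of Theorem \ref{3-T7} (and, indeed, is structurally identical to the first few lines of the proof of Theorem \ref{3-T10}).
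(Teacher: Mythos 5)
Your proposal is correct and follows essentially the same route as the paper's own proof: write the product of tangents as $-\left(1-\frac{2}{\nu_{j}^{k}\o_{\a}^{k}+1}\right)\left(1-\frac{2}{\nu_j^h\xi_{\b}^{h}+1}\right)$, expand, and evaluate the two single sums with \eqref{a+1} (valid since $\mu$ is odd), leaving the cross term on the right-hand side. Your additional parity and coprimality check that none of the denominators vanish is a point the paper leaves implicit, but it does not change the argument.
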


\begin{proof}
Analogously, we have
\begin{align*}
&\sum_{j=1}^{\mu}\tan\left(k\Big(\frac{j}{\mu}+\frac{\a}{h}\Big)\pi\right)
\tan\left(h\Big(\frac{j}{k}+\frac{\b}{k}\Big)\pi\right)=
-\sum_{j=1}^{\mu}\frac{(\nu_{j}^{k}\o_{\a}^{k}-1)}{(\nu_{j}^{k}\o_{\a}^{k}+1)}
\frac{(\nu_j^h\xi_{\b}^{h}-1)}{(\nu_j^h\xi_{\b}^{h}+1)}\\
&=-\sum_{j=1}^{\mu}\left(1-\frac{2}{\nu_{j}^{k}\o_{\a}^{k}+1}\right)\left(1-\frac{2}{\nu_j^h\xi_{\b}^{h}+1}\right)\\
&=-\sum_{j=1}^{\mu}\left\{1-\frac{2}{\nu_{j}^{k}\o_{\a}^{k}+1}-\frac{2}{\nu_j^h\xi_{\b}^{h}+1}+
\frac{4}{(\nu_{j}^{k}\o_{\a}^{k}+1)(\nu_j^h\xi_{\b}^{h}+1)} \right\}\\
&=-\mu\left(1-\frac{2}{\o_{\a}^{k\mu}+1}-\frac{2}{\xi_{\b}^{h\mu}+1}\right)-
4\sum_{j=1}^{\mu}\frac{1}{(\nu_{j}^{k}\o_{\a}^{k}+1)(\nu_j^h\xi_{\b}^{h}+1)},
\end{align*}
where we employed \eqref{a+1} for the last equality.
\end{proof}

Repeating similar  arguments used in the proof of Theorem \ref{3-T8}, we can derive the following theorem.

\begin{theorem}[Three Sum Relation]\label{3-T12}
Let $h, k$ and $\mu$ be pairwise relatively prime odd positive integers with $h, k \geq 3,$ 
and let $\xi_n=e^{2\pi in/k},$ $\o_n=e^{2\pi in/h}$ and $\nu_n=e^{2\pi in/\mu}.$
If $\a$ and $\b$ are either $0$ or positive integers such that $(\a, h)=1$ and $(\b, k)=1,$
\begin{align*}
&h\mu\sum_{j=1}^{k-1}\frac{1}{(\xi^{\mu}_{j+\b}\o^{\mu}_{\a}+1)(\xi_j^h-1)}
+k\mu\sum_{j=1}^{h-1}\frac{1}{(\xi^{\mu}_{\b}\o^{\mu}_{\a+j}+1)(\o_j^k-1)}\\
&\qquad -hk\sum_{j=1}^{\mu}\frac{1}{(\o_{\a}^{k}\nu_{j}^{k}+1)(\xi_{\b}^{h}\nu_j^h+1)}\\
&=-hk\mu\Big(\frac{1}{\o_{\a}^{k\mu}+1}+\frac{1}{\xi_{\b}^{h\mu}+1}\Big)
+\mu\Big(\mu+\frac{k}{2}+\frac{h}{2}\Big)\frac{1}{\xi_{\b}^{\mu}\o_{\a}^{\mu}+1}
-\frac{\mu^2}{(\xi_{\b}^{\mu}\o_{\a}^{\mu}+1)^2}.
\end{align*}
\end{theorem}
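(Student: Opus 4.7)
The plan is to mimic, step-for-step, the proof of Theorem \ref{3-T8}, replacing each factor of the form $(\xi^\mu_{j+\b}\o^\mu_\a - 1)$, $(\xi^\mu_\b \o^\mu_{\a+j} - 1)$, and $(\o_\a^k \nu_j^k - 1)(\xi_\b^h \nu_j^h - 1)$ by its $+1$ counterpart, and similarly for the intermediate objects produced along the way. The hypothesis that $h$, $k$, and $\mu$ are all odd is exactly what is required to invoke \eqref{a+1} and \eqref{a+12} in place of \eqref{a1} and \eqref{a2}, since those plus-one identities are valid only for odd cardinality of the root-of-unity group.

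Concretely, I would begin by expanding the summand $h\mu/[(\xi^\mu_{j+\b}\o^\mu_\a + 1)(\xi_j^h - 1)]$ via
\[
\frac{\mu}{\xi^\mu_{j+\b}\o^\mu_\a + 1} = \sum_{\ell=1}^{\mu} \frac{1}{\xi_{j+\b}\o_\a \nu_\ell + 1}, \qquad \frac{h}{\xi_j^h - 1} = \sum_{t=1}^{h} \frac{1}{\xi_j \o_t - 1},
\]
the first identity coming from \eqref{a+1} with $m=\mu$ odd. Partial fractions in $\xi_j$ reduce the resulting triple sum to sums of single poles, which are then collapsed using \eqref{a+1} and \eqref{a1}. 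This parallels \eqref{3-T81} in the proof of Theorem \ref{3-T8}; the only structural difference is that the partial-fraction coefficients now involve sums of the form $(\xi_\b\o_\a\nu_\ell\o_t^{-1}+1)^{-1}$ rather than the corresponding differences. Propagating the substitution through the analogues of \eqref{3-T82} and the displays that follow produces a list of intermediate terms in one-to-one correspondence with those in the $-1$ case.

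The delicate point is tracking the sign of the squared term. At the step analogous to \eqref{3-T83} the sum $\sum_{\ell=1}^{\mu}1/(\xi_\b\o_\a\nu_\ell + 1)^2$ arises, and by \eqref{a+12} it equals
\[
-\frac{\mu(\mu-1)}{\xi_\b^\mu \o_\a^\mu + 1} + \frac{\mu^2}{(\xi_\b^\mu \o_\a^\mu + 1)^2},
\]
with sign opposite to the first summand of \eqref{a2}. Propagating this opposite sign through the remaining simplifications is precisely what produces the flip of $+\mu^2/(\xi_\b^\mu \o_\a^\mu - 1)^2$ into $-\mu^2/(\xi_\b^\mu \o_\a^\mu + 1)^2$ on the right-hand side, and it simultaneously produces the negative coefficient on the third double sum $hk\sum_{j=1}^{\mu}1/[(\o_\a^k\nu_j^k + 1)(\xi_\b^h\nu_j^h + 1)]$ on the left, as recorded in the statement.

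The main obstacle is purely a matter of sign bookkeeping: every sign-sensitive manipulation from \eqref{3-T81}--\eqref{3-T84} must be reproduced faithfully, and because \eqref{a+12}'s first term carries opposite sign to that of \eqref{a2}, one has to confirm at each stage that the intermediate terms still combine into a clean closed form. Once this is verified, the remaining algebra proceeds exactly as in Theorem \ref{3-T8}, and the analogue of Lemma \ref{3-L2} applied to the subsidiary two-variable reciprocity pair completes the derivation of the three sum relation.
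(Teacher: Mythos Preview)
Your approach is essentially the one the paper itself indicates: the paper states that Theorem \ref{3-T12} is obtained by ``repeating similar arguments used in the proof of Theorem \ref{3-T8}'', and the commented-out proof in the source does exactly the substitution you describe, replacing \eqref{a1}, \eqref{a2} by \eqref{a+1}, \eqref{a+12} and tracking the resulting sign changes. One small correction: there is no appeal to Lemma \ref{3-L2} (or any analogue of it) in the proof of Theorem \ref{3-T8}, so your final sentence about a ``subsidiary two-variable reciprocity pair'' is superfluous---the computation is self-contained once the analogues of \eqref{3-T81}--\eqref{3-T84} are established.
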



\begin{theorem}[Reciprocity Theorem]\label{3-T13}
Let $h, k$ and $\mu$ be pairwise relatively prime odd positive integers with $h, k \geq 3,$
and let $\xi_n=e^{2\pi in/k},$ $\o_n=e^{2\pi in/h}$ and $\nu_n=e^{2\pi in/\mu}.$
If $\a$ and $\b$ are either $0$ or positive integers such that $(\a, h)=1$ and $(\b, k)=1,$ then
\begin{align*}
4hk\mu \Big(T_{\a, \b}(h, k | \mu)+T_{\b, \a}(k, h | \mu)\Big)
=&-\mu^2\sec^2\left(\mu\Big(\frac{\a}{h}+\frac{\b}{k}\Big)\pi\right)+hk\mu\notag\\
&+hk\sum_{j=1}^{\mu}\tan\left(k\Big(\frac{j}{\mu}+\frac{\a}{h}\Big)\pi\right)
\tan\left(h\Big(\frac{j}{\mu}+\frac{\b}{k}\Big)\pi\right).
\end{align*}
\end{theorem}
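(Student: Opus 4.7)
The plan is to mimic the strategy used to prove the cotangent reciprocity theorem (Theorem~\ref{3-T9}), but now systematically replacing each ingredient by its tangent/``$+1$'' analogue established in Theorems~\ref{3-T10}, \ref{3-T11}, and \ref{3-T12}. Concretely, I would begin by applying Theorem~\ref{3-T10} to both $T_{\a,\b}(h,k|\mu)$ and (by symmetry in $(h,\a)\leftrightarrow(k,\b)$) to $T_{\b,\a}(k,h|\mu)$. Multiplying the former by $h\mu$ and the latter by $k\mu$ and adding, I obtain
\begin{align*}
4hk\mu\bigl(T_{\a,\b}(h,k|\mu)+T_{\b,\a}(k,h|\mu)\bigr)
&=\frac{2(h+k)\mu}{\xi_{\b}^{\mu}\o_{\a}^{\mu}+1}-\frac{2hk\mu}{\o_{\a}^{k\mu}+1}-\frac{2hk\mu}{\xi_{\b}^{h\mu}+1}\\
&\quad-4h\mu\sum_{j=1}^{k-1}\frac{1}{(\xi_{j+\b}^{\mu}\o_{\a}^{\mu}+1)(\xi_j^{h}-1)}
-4k\mu\sum_{j=1}^{h-1}\frac{1}{(\xi_{\b}^{\mu}\o_{\a+j}^{\mu}+1)(\o_j^{k}-1)}.
\end{align*}

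Next, I would substitute the three sum relation of Theorem~\ref{3-T12} to rewrite the two ``mixed'' sums over $j=1,\dots,k-1$ and $j=1,\dots,h-1$ in terms of a single cleaner sum over $j=1,\dots,\mu$. After the substitution, all dependence on the mixed sums is replaced by $hk\sum_{j=1}^{\mu}\frac{1}{(\o_{\a}^{k}\nu_{j}^{k}+1)(\xi_{\b}^{h}\nu_j^{h}+1)}$ plus explicit rational terms in $\xi_{\b}^{\mu}\o_{\a}^{\mu}$, $\o_{\a}^{k\mu}$, and $\xi_{\b}^{h\mu}$. Collecting these terms, the $\frac{1}{\o_{\a}^{k\mu}+1}$ and $\frac{1}{\xi_{\b}^{h\mu}+1}$ pieces coming from the three sum relation partially cancel those produced in the previous step, and the $\frac{1}{\xi_{\b}^{\mu}\o_{\a}^{\mu}+1}$ contributions collapse against one another leaving only a ``$-4\mu^{2}/(\xi_{\b}^{\mu}\o_{\a}^{\mu}+1)$'' term alongside a ``$+4\mu^{2}/(\xi_{\b}^{\mu}\o_{\a}^{\mu}+1)^{2}$'' term.

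Then I would invoke Theorem~\ref{3-T11} in the direction
\begin{equation*}
\frac{2hk\mu}{\o_{\a}^{k\mu}+1}+\frac{2hk\mu}{\xi_{\b}^{h\mu}+1}-4hk\sum_{j=1}^{\mu}\frac{1}{(\o_{\a}^{k}\nu_{j}^{k}+1)(\xi_{\b}^{h}\nu_j^{h}+1)}=hk\mu+hk\sum_{j=1}^{\mu}\tan\!\Big(k\Big(\tfrac{j}{\mu}+\tfrac{\a}{h}\Big)\pi\Big)\tan\!\Big(h\Big(\tfrac{j}{\mu}+\tfrac{\b}{k}\Big)\pi\Big),
\end{equation*}
which eliminates the remaining sum over $j=1,\dots,\mu$ and introduces both the constant $hk\mu$ and the desired tangent-product sum. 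The final algebraic miracle is the identity, valid for $z=e^{i\theta}\neq -1$,
\begin{equation*}
-\frac{4\mu^{2}}{z+1}+\frac{4\mu^{2}}{(z+1)^{2}}=-\frac{4\mu^{2}z}{(z+1)^{2}}=-\frac{\mu^{2}}{\cos^{2}(\theta/2)},
\end{equation*}
applied with $z=\xi_{\b}^{\mu}\o_{\a}^{\mu}=e^{2\pi i\mu(\a/h+\b/k)}$, which converts the residual rational expression into $-\mu^{2}\sec^{2}\!\bigl(\mu(\tfrac{\a}{h}+\tfrac{\b}{k})\pi\bigr)$.

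The reasoning is entirely parallel to the cotangent case, so I expect no conceptual obstacle. The main risk is bookkeeping: the signs in the $+1$ world differ from those in the $-1$ world at several places (the sign in front of the $\mu^{2}/(\cdot)^{2}$ term flips, and some summands that were additive become subtractive), so careful tracking of signs when substituting Theorem~\ref{3-T12} into the combined expression is the principal source of potential error. Once those sign flips are handled consistently, the cancellations leading to $-\mu^{2}\sec^{2}+hk\mu+hk\sum\tan\tan$ are forced.
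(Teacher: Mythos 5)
Your proposal is correct and follows exactly the paper's own route: the paper proves Theorem \ref{3-T13} by combining Theorems \ref{3-T10}, \ref{3-T11}, and \ref{3-T12} in precisely the order and manner you describe, with the same cancellations and the same final identity $-\tfrac{4\mu^2}{z+1}+\tfrac{4\mu^2}{(z+1)^2}=-\tfrac{4\mu^2 z}{(z+1)^2}=-\mu^2\sec^2(\theta/2)$ for $z=\xi_{\b}^{\mu}\o_{\a}^{\mu}$. Nothing essential is missing; your intermediate displays agree with the paper's computation.
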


\begin{proof}
By Theorems \ref{3-T10}, \ref{3-T11} and \ref{3-T12},
\begin{align*}
&4hk\mu\Big(T_{\a, \b}(h, k | \mu)+T_{\b, \a}(k, h | \mu)\Big)=-\frac{4\mu^2}{\xi_{\b}^{\mu}\o_{\a}^{\mu}+1}
+\frac{4\mu^2}{(\xi_{\b}^{\mu}\o_{\a}^{\mu}+1)^2} \notag\\
&\qquad+\frac{2hk\mu}{\o_{\a}^{k\mu}+1}+\frac{2hk\mu}{\xi_{\b}^{h\mu}+1}
-4hk\sum_{j=1}^{\mu}\frac{1}{(\o_{\a}^{k}\nu_{j}^{k}+1)(\xi_{\b}^{h}\nu_j^h+1)}\notag\\
&=-4\mu^2\frac{\xi_{\b}^{\mu}\o_{\a}^{\mu}}{(\xi_{\b}^{\mu}\o_{\a}^{\mu}+1)^2}
+hk\mu+hk\sum_{j=1}^{\mu}\tan\left(k\Big(\frac{j}{\mu}+\frac{\a}{h}\Big)\pi\right)
\tan\left(h\Big(\frac{j}{\mu}+\frac{\b}{k}\Big)\pi\right)\notag\\
&=-\mu^2\sec^2\left(\mu\Big(\frac{\a}{h}+\frac{\b}{k}\Big)\pi\right)+hk\mu
+hk\sum_{j=1}^{\mu}\tan\left(k\Big(\frac{j}{\mu}+\frac{\a}{h}\Big)\pi\right)
\tan\left(h\Big(\frac{j}{\mu}+\frac{\b}{k}\Big)\pi\right).
\end{align*}
\end{proof}

\begin{corollary}[Three Sum Relation]
Let $h, k$ and $\mu$ be pairwise relatively prime odd positive integers with $h, k \geq 3.$ Then,
\begin{align*}
&h\mu\sum_{j=1}^{k-1}\tan\Big(\dfrac{\mu j\pi }{k}\Big)\cot\Big(\dfrac{h j\pi }{k}\Big)
+k\mu\sum_{j=1}^{h-1}\tan\Big(\dfrac{\mu j\pi}{h}\Big)\cot\Big(\dfrac{kj\pi}{h}\Big)\\
&\quad -hk\sum_{j=1}^{\mu}\tan\Big(\dfrac{kj\pi }{\mu}\Big)\tan\Big(\dfrac{hj\pi}{\mu}\Big)=-\mu^2+hk\mu.
\end{align*}
\end{corollary}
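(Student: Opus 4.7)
The plan is to obtain the corollary as a direct specialization of the Reciprocity Theorem (Theorem \ref{3-T13}) with the parameters $\alpha=\beta=0$. This specialization is explicitly permitted by the hypothesis of Theorem \ref{3-T13} (``$\alpha$ and $\beta$ are either $0$ or positive integers such that $(\alpha,h)=1$ and $(\beta,k)=1$''), and at these values the definition collapses to
\[
T_{0,0}(h,k\mid\mu)=\frac{1}{4k}\sum_{j=1}^{k-1}\tan\!\Big(\frac{\mu j\pi}{k}\Big)\cot\!\Big(\frac{hj\pi}{k}\Big),
\]
so that $4hk\mu\,T_{0,0}(h,k\mid\mu)$ and $4hk\mu\,T_{0,0}(k,h\mid\mu)$ reproduce exactly the first two tan--cot sums appearing in the corollary.

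On the right-hand side of Theorem \ref{3-T13}, setting $\alpha=\beta=0$ turns $-\mu^2\sec^2(\mu(\alpha/h+\beta/k)\pi)$ into $-\mu^2\sec^2(0)=-\mu^2$, leaves the $hk\mu$ term untouched, and reduces the tangent--tangent sum $hk\sum_{j=1}^{\mu}\tan(k(j/\mu+\alpha/h)\pi)\tan(h(j/\mu+\beta/k)\pi)$ to $hk\sum_{j=1}^{\mu}\tan(kj\pi/\mu)\tan(hj\pi/\mu)$. Transposing this last sum to the left-hand side yields precisely the claimed identity, with right-hand side $-\mu^2+hk\mu$. Thus no further computation is required beyond bookkeeping.

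The only genuine concern is checking that no pole arises from the specialization. For the tan--cot sums, a pole in $\tan(\mu j\pi/k)$ or $\cot(hj\pi/k)$ would force $k\mid j$ (using $(\mu,k)=(h,k)=1$ and $k$ odd), which is impossible in the range $1\le j\le k-1$; the same reasoning handles the sum over $1\le j\le h-1$. For the tangent--tangent sum, the term $j=\mu$ contributes zero since $\tan(k\pi)=\tan(h\pi)=0$, while for $1\le j<\mu$ the oddness of $\mu$ together with $(k,\mu)=(h,\mu)=1$ precludes $kj/\mu$ or $hj/\mu$ from being half-integers. Hence Theorem \ref{3-T13} applies verbatim at $\alpha=\beta=0$, and the corollary follows. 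In short, there is no real obstacle: the main step is simply verifying that every factor on both sides of Theorem \ref{3-T13} transforms correctly once the shifts $\alpha/h$ and $\beta/k$ are removed.
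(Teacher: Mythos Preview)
Your proposal is correct and follows exactly the same approach as the paper, which obtains the corollary by setting $\alpha=\beta=0$ in Theorem \ref{3-T13}. Your additional verification that no poles arise at this specialization is a welcome elaboration, but the core argument is identical.
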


 \begin{proof}
 Set $\a=\b=0$ in Theorem \ref{3-T13}.
 \end{proof}

\begin{corollary}[Three Sum Relation]
Let $h, k$ and $\mu$ be pairwise relatively prime odd positive integers with $h, k \geq 3.$
If $\a$ is either $0$ or a positive integer such that $(a, h)=1,$ then
\begin{align*}
&h\mu\sum_{j=1}^{k-1}\tan\left(\mu\Big(\frac{j}{k}+\frac{\a}{h}\Big)\pi\right)\cot\Big(\dfrac{h j\pi }{k}\Big)
+k\mu\sum_{j=1}^{h-1}\tan\left(\mu\Big(\frac{j}{h}+\frac{\a}{h}\Big)\pi\right)\cot\Big(\dfrac{kj\pi}{h}\Big)\\
&\quad -hk\sum_{j=1}^{\mu}\tan\left(k\Big(\frac{j}{\mu}+\frac{\a}{h}\Big)\pi\right)\tan\Big(\dfrac{hj\pi}{\mu}\Big)
=-\mu^2\sec^2\Big(\frac{\mu\a}{h}\pi\Big)+hk\mu.
\end{align*}
\end{corollary}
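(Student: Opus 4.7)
The plan is to derive this identity as a direct specialization of Theorem \ref{3-T13} at $\beta=0$. The hypotheses of Theorem \ref{3-T13} explicitly allow either of $\a,\b$ to be zero, so this substitution is legitimate, and no new analytic work is required beyond substitution and bookkeeping.

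First I would substitute $\b=0$ into the definition
$$T_{\a,\b}(h,k\mid\mu)=\frac{1}{4k}\sum_{j=1}^{k-1}\tan\!\left(\mu\Big(\tfrac{j}{k}+\tfrac{\a}{h}+\tfrac{\b}{k}\Big)\pi\right)\cot\!\left(\tfrac{jh\pi}{k}\right)$$
to obtain
$$4hk\mu\, T_{\a,0}(h,k\mid\mu)=h\mu\sum_{j=1}^{k-1}\tan\!\left(\mu\Big(\tfrac{j}{k}+\tfrac{\a}{h}\Big)\pi\right)\cot\!\left(\tfrac{hj\pi}{k}\right),$$
and by interchanging the roles of $(h,k)$ and $(\a,\b)$,
$$4hk\mu\, T_{0,\a}(k,h\mid\mu)=k\mu\sum_{j=1}^{h-1}\tan\!\left(\mu\Big(\tfrac{j}{h}+\tfrac{\a}{h}\Big)\pi\right)\cot\!\left(\tfrac{kj\pi}{h}\right).$$
These are precisely the first two sums on the left-hand side of the asserted identity.

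Next I would substitute $\b=0$ into the right-hand side of the reciprocity formula in Theorem \ref{3-T13}. The argument $\mu(\a/h+\b/k)$ of the squared secant collapses to $\mu\a/h$, giving the term $-\mu^{2}\sec^{2}(\mu\a\pi/h)$; the factor $\tan(h(j/\mu+\b/k)\pi)$ inside the sum over $j$ reduces to $\tan(hj\pi/\mu)$; and the term $hk\mu$ is untouched. Transposing the resulting sum $-hk\sum_{j=1}^{\mu}\tan(k(j/\mu+\a/h)\pi)\tan(hj\pi/\mu)$ to the left-hand side produces exactly the stated identity.

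The only point that needs verification is that all of the trigonometric quantities above remain finite, which amounts to checking that the tangent and secant arguments are never odd multiples of $\pi/2$. Because $h,k,\mu$ are pairwise coprime and odd with $h,k\geq 3$, and $(\a,h)=1$, one verifies that $\mu\a/h\notin\tfrac12+\Z$ (since $h$ is odd), that $\mu(j/k+\a/h)$ and $\mu(j/h+\a/h)$ avoid $\tfrac12+\Z$ (since $h,k,\mu$ are coprime and odd), that $hj/k$ and $kj/h$ are not integers for $1\leq j\leq k-1$ and $1\leq j\leq h-1$, and that $hj/\mu$ and $k(j/\mu+\a/h)$ avoid the forbidden set. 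These are the same admissibility conditions already embedded in Theorem \ref{3-T13}, so no additional obstacle arises, and the corollary follows at once upon setting $\b=0$.
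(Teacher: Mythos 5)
Your proposal is correct and coincides with the paper's own proof, which consists precisely of setting $\b=0$ in Theorem \ref{3-T13}; your substitutions and the bookkeeping that moves the $\mu$-sum to the left-hand side match the intended argument. The additional finiteness checks you include are harmless but not needed beyond what Theorem \ref{3-T13} already guarantees.
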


 \begin{proof}
 Set $\b=0$ in Theorem \ref{3-T13}.
 \end{proof}

 In a similar vein, we can also derive the following identities.

\begin{theorem}
Let $h, k$ and $\mu$ be  pairwise relatively prime odd positive integers where $h, k \geq 3,$
and let $\xi_n=e^{2\pi in/k}$ and $\o_n=e^{2\pi in/h}$. 
If $\a$ is a positive integer such that $(\a, h)=1,$ and $\b$ is  either $0$ or a positive integer with $(\b, k)=1,$
\begin{align*}
&\sum_{j=1}^{k-1}\cot\left(\mu\Big(\frac{j}{k}+\frac{\a}{h}+\frac{\b}{k}\Big)\pi\right)
\tan\left(\frac{jh}{k}\pi\right)\\
&\qquad =\frac{2k}{\o^{k\mu}_{\a}-1}-\frac{2}{\xi_{\b}^{\mu}\o_{\a}^{\mu}-1}
-4\sum_{j=1}^{k-1}\frac{1}{(\xi^{\mu}_{j+\b}\o^{\mu}_{\a}-1)(\xi_j^h+1)},
\end{align*}
\begin{align*}
&\sum_{j=1}^{h-1}\tan\left(\mu\Big(\frac{j}{h}+\frac{\a}{h}+\frac{\b}{k}\Big)\pi\right)
\tan\left(\frac{jk}{h}\pi\right)\\
&\qquad =\frac{2h}{\xi_{\b}^{h\mu}+1}-\frac{2}{\xi_{\b}^{\mu}\o_{\a}^{\mu}+1}
-4\sum_{j=1}^{h-1}\frac{1}{(\xi^{\mu}_{j}\o^{\mu}_{\a+j}+1)(\o_j^k+1)},
\end{align*}
and
\begin{align*}
&\sum_{j=1}^{\mu}\cot\left(k\Big(\frac{j}{\mu}+\frac{\a}{h}\Big)\pi\right)
\tan\left(h\Big(\frac{j}{\mu}+\frac{\b}{k}\Big)\pi\right)\\
&\quad =\mu\left(1+\frac{2}{\o_{\a}^{k\mu}-1}-\frac{2}{\xi_{\b}^{h\mu}+1}\right)-
4\sum_{j=1}^{\mu}\frac{1}{(\nu_{j}^{k}\o_{\a}^{k}-1)(\nu_j^h\xi_{\b}^{h}+1)}.
\end{align*}
\end{theorem}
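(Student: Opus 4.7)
The plan for all three identities is to employ the same exponential-substitution template that yielded Theorems \ref{3-T6}, \ref{3-T7}, \ref{3-T10}, and \ref{3-T11}. First I would write $\cot(\pi\theta) = i(e^{2\pi i\theta}+1)/(e^{2\pi i\theta}-1)$ and $\tan(\pi\theta) = -i(e^{2\pi i\theta}-1)/(e^{2\pi i\theta}+1)$, so that every $\cot\cdot\tan$ or $\tan\cdot\tan$ factor in the summands becomes a rational function in the appropriate roots of unity. Using the elementary identities $(X+1)/(X-1) = 1 + 2/(X-1)$ and $(Y-1)/(Y+1) = 1 - 2/(Y+1)$, I then split each summand into four pieces: a constant, two single reciprocal sums, and the double product that already appears on the right-hand side.

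For the first identity, the summand becomes $(1 + 2/(\xi_{j+\b}^\mu\o_\a^\mu - 1))(1 - 2/(\xi_j^h+1))$. Since $(h,k) = 1$ and $k$ is odd, the map $j\mapsto jh \pmod{k}$ permutes residues, so $\{\xi_j^h\}_{j=1}^{k-1} = \{\xi_j\}_{j=1}^{k-1}$, and \eqref{a+1} with $a=1$ yields $\sum_{j=1}^{k-1}1/(\xi_j^h+1) = (k-1)/2$. Similarly, since $(\mu, k) = 1$, raising to the $\mu$-th power is a bijection on $k$-th roots of unity, so $\{\xi_{j+\b}^\mu\}_{j=1}^{k-1}$ exhausts the $k$-th roots of unity except $\xi_\b^\mu$; invoking \eqref{a1} and subtracting the missing term produces
\[
\sum_{j=1}^{k-1}\frac{1}{\xi_{j+\b}^\mu\o_\a^\mu - 1} = \frac{k}{\o_\a^{k\mu}-1} - \frac{1}{\xi_\b^\mu\o_\a^\mu - 1}.
\]
Together with the constant contribution $k-1$, which precisely cancels $-2\cdot(k-1)/2$, this assembles exactly the claimed right-hand side.

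The remaining two identities follow by the same recipe. Identity (2) uses $\tan\cdot\tan$, whose exponential form factors as $-(1 - 2/(\o_{j+\a}^\mu\xi_\b^\mu+1))(1 - 2/(\o_j^k+1))$, and both resulting single sums are handled by \eqref{a+1} (valid because $h$ is odd). Identity (3), summed over $j=1,\dots,\mu$, factors $\cot\cdot\tan$ as $(1+2/(\nu_j^k\o_\a^k-1))(1 - 2/(\nu_j^h\xi_\b^h+1))$; here \eqref{a1} and \eqref{a+1} combined with the bijection arguments $(k,\mu) = (h,\mu) = 1$ give $\sum_{j=1}^\mu 1/(\nu_j^k\o_\a^k-1) = \mu/(\o_\a^{k\mu}-1)$ and $\sum_{j=1}^\mu 1/(\nu_j^h\xi_\b^h+1) = \mu/(\xi_\b^{h\mu}+1)$, and the constant term $\mu$ survives because the range now runs over all $\mu$ values rather than up to $\mu-1$. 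The only real obstacle is sign bookkeeping across the three variants and verifying that the oddness hypotheses on $h$, $k$, and $\mu$ legitimize every invocation of \eqref{a+1}; no ingredient beyond Lemma \ref{3-L1} and formulas \eqref{a1}, \eqref{a+1} is needed.
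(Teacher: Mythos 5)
Your proposal is correct and follows exactly the route the paper intends (the paper omits an explicit proof, noting only that these identities follow ``in a similar vein'' to Theorems \ref{3-T6}, \ref{3-T7}, \ref{3-T10} and \ref{3-T11}): write each cotangent/tangent factor as $1\pm 2/(X\mp 1)$ in the appropriate roots of unity, evaluate the single sums via \eqref{a1} and \eqref{a+1} together with the bijection arguments coming from the coprimality and oddness hypotheses, and retain the double sum. Note also that your reading of the second identity, with $\xi_{\b}^{\mu}$ rather than the printed $\xi_{j}^{\mu}$ in the double sum, is the correct one, so your derivation needs no changes.
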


 \begin{theorem}
Let $h, k$ and $\mu$ be pairwise relatively prime odd positive integers with $h, k \geq 3,$ 
and let $\xi_n=e^{2\pi in/k},$ $\o_n=e^{2\pi in/h}$ and $\nu_n=e^{2\pi in/\mu}.$
If $\a$ is a positive integer such that $(\a, h)=1,$ and $\b$ is  either $0$ or a positive integer with $(\b, k)=1,$
\begin{align*}
&h\mu\sum_{j=1}^{k-1}\frac{1}{(\xi^{\mu}_{j+\b}\o^{\mu}_{\a}-1)(\xi_j^h+1)}
-k\mu\sum_{j=1}^{h-1}\frac{1}{(\xi^{\mu}_{\b}\o^{\mu}_{\a+j}+1)(\o_j^k+1)}\\
&\qquad +hk\sum_{j=1}^{\mu}\frac{1}{(\o_{\a}^{k}\nu_{j}^{k}-1)(\xi_{\b}^{h}\nu_j^h+1)}\\
&=hk\mu\Big(\frac{1}{\o_{\a}^{k\mu}-1}-\frac{1}{\xi_{\b}^{h\mu}+1}\Big)
-\frac{h\mu}{2(\xi_{\b}^{\mu}\o_{\a}^{\mu}-1)}
+\frac{\mu(3k-2)}{2(\xi_{\b}^{\mu}\o_{\a}^{\mu}+1)}.
\end{align*}
\end{theorem}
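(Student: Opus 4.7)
The strategy is to follow the template of Theorems \ref{3-T8} and \ref{3-T12}, adjusting for the mixed pattern of $\pm 1$ in the denominators. I would start from the first sum on the left and apply \eqref{a1} together with \eqref{a+1} (the latter requiring $h$ odd) to expand
\[
h\mu\sum_{j=1}^{k-1}\frac{1}{(\xi^{\mu}_{j+\b}\o^{\mu}_{\a}-1)(\xi_j^h+1)}=\sum_{j=1}^{k-1}\sum_{\ell=1}^{\mu}\frac{1}{\xi_{j+\b}\o_{\a}\nu_{\ell}-1}\sum_{t=1}^{h}\frac{1}{\xi_j\o_t+1}.
\]
Setting $A=\xi_{\b}\o_{\a}\nu_{\ell}$ and $B=\o_t$, the partial-fraction identity
\[
\frac{1}{(A\xi_j-1)(B\xi_j+1)}=\frac{1}{A+B}\left(\frac{A}{A\xi_j-1}-\frac{B}{B\xi_j+1}\right),
\]
together with the rewriting $(A+B)^{-1}=\o_{-t}/(\xi_{\b}\o_{\a-t}\nu_{\ell}+1)$, splits the $j$-summation into pieces to which \eqref{a1}, \eqref{a+1}, and Lemma \ref{3-L1} apply in closed form.

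Two families of residual double sums over $(\ell,t)$ remain. The first, summed in $\ell$ first, reduces---after the change of index $t\mapsto -t$ and by re-invoking \eqref{a+1} and \eqref{a+12} in place of \eqref{a1} and \eqref{a2} at the analogous step in the proof of Theorem \ref{3-T8}---to $k\mu\sum_{j=1}^{h-1}1/\bigl((\xi^{\mu}_{\b}\o^{\mu}_{\a+j}+1)(\o_j^k+1)\bigr)$, which is precisely the second sum on the left-hand side of the theorem (appearing with a minus sign after sign bookkeeping). The second family, summed in $t$ first via \eqref{a+1}, collapses over $\ell=1,\dots,\mu$ into $hk\sum_{j=1}^{\mu}1/\bigl((\o_{\a}^{k}\nu_j^{k}-1)(\xi_{\b}^{h}\nu_j^h+1)\bigr)$, matching the third sum. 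The remaining single sums collapse, again via \eqref{a1}, \eqref{a+1}, \eqref{a2}, \eqref{a+12}, and Lemma \ref{3-L1}, to the closed-form polar terms $hk\mu/(\o_{\a}^{k\mu}-1)$, $hk\mu/(\xi_{\b}^{h\mu}+1)$, $h\mu/(\xi^{\mu}_{\b}\o^{\mu}_{\a}-1)$, and $\mu(3k-2)/(\xi^{\mu}_{\b}\o^{\mu}_{\a}+1)$ that assemble the right-hand side.

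The hardest step is tracking signs and boundary corrections when restricting $\sum_{j=1}^{k}$ to $\sum_{j=1}^{k-1}$, with the analogous restrictions in the $\ell$- and $t$-sums; these produce the single contributions at $1/(\xi^{\mu}_{\b}\o^{\mu}_{\a}\pm 1)$, whose coefficients are then completed by cancellations of squared-denominator contributions coming from \eqref{a2} and \eqref{a+12}. The asymmetry of the denominator $\xi_{\b}\o_{\a}\nu_{\ell}+\o_t$, which mixes $(\,\cdot\,-1)$- and $(\,\cdot\,+1)$-type factors once rewritten, is precisely what forces the non-symmetric coefficient $\mu(3k-2)/2$ on the right; in particular, the expected $1/(\xi^{\mu}_{\b}\o^{\mu}_{\a}\pm 1)^2$ terms seen in Theorems \ref{3-T8} and \ref{3-T12} must cancel during this step, a point worth verifying carefully. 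A useful cross-check throughout is to run the derivation in parallel with the proof of Theorem \ref{3-T12}: each flip from $(\,\cdot\,-1)$ to $(\,\cdot\,+1)$ predicts a deterministic sign change, and each squared-denominator step is replaced via \eqref{a+12} by a quantifiable adjustment, whose net effect can be matched term-by-term against the stated right-hand side.
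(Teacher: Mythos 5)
Your plan is indeed the route the paper intends for this theorem (it is grouped with the identities derived ``in a similar vein'' to Theorems \ref{3-T8} and \ref{3-T12}): expand $(\xi_{j+\b}^{\mu}\o_{\a}^{\mu}-1)^{-1}$ by \eqref{a1} and $(\xi_j^{h}+1)^{-1}$ by \eqref{a+1}, split with your partial-fraction identity, and re-sum via \eqref{a1}, \eqref{a+1}, \eqref{a2}, \eqref{a+12} and Lemma \ref{3-L1}. Your structural claims are also accurate: in your notation, the $kA/(A^{k}-1)$ group gives $hk\mu/(\o_{\a}^{k\mu}-1)$ minus the third sum; the $kB/(B^{k}+1)$ group, after $t\mapsto -t$ and $1/(\o_{-t}^{k}+1)=1-1/(\o_{t}^{k}+1)$, gives $-hk\mu/(\xi_{\b}^{h\mu}+1)$, the second sum, and the boundary term $\frac{k\mu}{2(\xi_{\b}^{\mu}\o_{\a}^{\mu}+1)}$; and the two copies of $h\sum_{\ell=1}^{\mu}1/\bigl((\xi_{\b}\o_{\a}\nu_{\ell}-1)(\xi_{\b}^{h}\nu_{\ell}^{h}+1)\bigr)$ coming from the $A/(A-1)$ and $B/(B+1)$ groups cancel, so the squared-denominator terms do vanish, as you predicted.

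The genuine gap is precisely the step you deferred: the final bookkeeping does not assemble the stated right-hand side. Carrying your plan to completion, the contributions at $1/(\xi_{\b}^{\mu}\o_{\a}^{\mu}+1)$ from the $B/(B+1)$ group cancel entirely, and the only survivor is the boundary term above, so the method yields
\begin{align*}
&h\mu\sum_{j=1}^{k-1}\frac{1}{(\xi^{\mu}_{j+\b}\o^{\mu}_{\a}-1)(\xi_j^h+1)}
-k\mu\sum_{j=1}^{h-1}\frac{1}{(\xi^{\mu}_{\b}\o^{\mu}_{\a+j}+1)(\o_j^k+1)}
+hk\sum_{j=1}^{\mu}\frac{1}{(\o_{\a}^{k}\nu_{j}^{k}-1)(\xi_{\b}^{h}\nu_j^h+1)}\\
&\qquad=hk\mu\Big(\frac{1}{\o_{\a}^{k\mu}-1}-\frac{1}{\xi_{\b}^{h\mu}+1}\Big)
-\frac{h\mu}{2(\xi_{\b}^{\mu}\o_{\a}^{\mu}-1)}
+\frac{k\mu}{2(\xi_{\b}^{\mu}\o_{\a}^{\mu}+1)},
\end{align*}
i.e.\ with $k\mu$, not $\mu(3k-2)$, in the last numerator. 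A numerical check confirms this: for $(h,k,\mu,\a,\b)=(3,5,1,1,1)$ the left-hand side is $\approx -13.000-6.938i$, matching the display above, whereas the printed right-hand side is $\approx -11.000+12.090i$. So the theorem as stated cannot be reached by your (correct) method because it is false as printed; the discrepancy $\mu(k-1)/(\xi_{\b}^{\mu}\o_{\a}^{\mu}+1)$ also propagates to the subsequent Three Sum Relation, whose printed right-hand side $hk\mu-4\mu(k-1)/(\xi_{\b}^{\mu}\o_{\a}^{\mu}+1)$ cannot be correct since its left-hand side is real; with the corrected coefficient that relation becomes simply $hk\mu$. Your proposal becomes a proof only after this bookkeeping is actually performed and the target identity is adjusted accordingly.
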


\begin{theorem}[Three Sum Relation]
Let $h, k$ and $\mu$ be  pairwise relatively prime odd positive integers where $h, k \geq 3,$
and let $\xi_n=e^{2\pi in/k}$ and $\o_n=e^{2\pi in/h}.$
If $\a$ is a positive integer such that $(\a, h)=1,$ and $\b$ is  either $0$ or a positive integer with $(\b, k)=1,$ then
\begin{align*}
&h\mu\sum_{j=1}^{k-1}\cot\left(\mu\Big(\frac{j}{k}+\frac{\a}{h}+\frac{\b}{k}\Big)\pi\right)\tan\left(\frac{jh}{k}\pi\right)
-k\mu\sum_{j=1}^{h-1}\tan\left(\mu\Big(\frac{j}{h}+\frac{\a}{h}+\frac{\b}{k}\Big)\pi\right)\tan\left(\frac{jk}{h}\pi\right)\\
&\qquad +hk\sum_{j=1}^{\mu}\cot\left(k\Big(\frac{j}{\mu}+\frac{\a}{h}\Big)\pi\right)
\tan\left(h\Big(\frac{j}{\mu}+\frac{\b}{k}\Big)\pi\right)
=hk\mu-\frac{4\mu(k-1)}{\xi_{\b}^{\mu}\o_{\a}^{\mu}+1}.
\end{align*}
\end{theorem}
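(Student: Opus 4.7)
\medskip

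\noindent\textbf{Proof proposal.} The plan is to derive this three sum relation by exactly the mechanism used for Theorems \ref{3-T9} and \ref{3-T13}: take a weighted linear combination of the three evaluations in the (unnumbered) theorem immediately preceding, and then use the three sum relation for double sums stated just above to absorb the remaining double summations.

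First I would multiply the three identities of that theorem -- for $\sum_{j=1}^{k-1}\cot\!\cdot\!\tan$, for $\sum_{j=1}^{h-1}\tan\!\cdot\!\tan$, and for $\sum_{j=1}^{\mu}\cot\!\cdot\!\tan$ -- by $h\mu$, $-k\mu$, and $hk$ respectively, and add them. On the left this reproduces precisely the combination of tangent/cotangent sums that appears in the claim.

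Next, I would split the resulting right-hand side into a ``single-sum part'' containing only the constants $hk\mu$ and the simple expressions $1/(\o_\a^{k\mu}-1)$, $1/(\xi_\b^{h\mu}+1)$, $1/(\xi_\b^\mu\o_\a^\mu-1)$, $1/(\xi_\b^\mu\o_\a^\mu+1)$, and a ``double-sum part''. A direct inspection shows that the double-sum part equals $-4$ times
\[
h\mu\!\sum_{j=1}^{k-1}\!\df{1}{(\xi^{\mu}_{j+\b}\o^{\mu}_{\a}-1)(\xi_j^h+1)}-k\mu\!\sum_{j=1}^{h-1}\!\df{1}{(\xi^{\mu}_{\b}\o^{\mu}_{\a+j}+1)(\o_j^k+1)}+hk\!\sum_{j=1}^{\mu}\!\df{1}{(\o_\a^{k}\nu_j^{k}-1)(\xi_\b^{h}\nu_j^{h}+1)},
\]
which is exactly the left-hand side of the three sum relation stated immediately above this theorem. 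Substituting its evaluation introduces additional contributions through $1/(\o_\a^{k\mu}-1)$, $1/(\xi_\b^{h\mu}+1)$, $1/(\xi_\b^\mu\o_\a^\mu-1)$, and $1/(\xi_\b^\mu\o_\a^\mu+1)$.

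Finally, I would collect coefficients. A routine check shows that the first three of these four classes cancel exactly against their counterparts in the single-sum part, while the $1/(\xi_\b^\mu\o_\a^\mu+1)$ terms collapse to $-\tf{4\mu(k-1)}{\xi_\b^\mu\o_\a^\mu+1}$, and the constant $hk\mu$ survives untouched. The main obstacle is the bookkeeping: six distinct ``simple'' quantities appear among the ingredients and I must verify that each cancellation happens with the correct sign. No further tools are needed beyond Lemmas \ref{3-L1} and \ref{3-L2}, already absorbed into the ingredient theorems, and the derivation parallels the proofs of Theorems \ref{3-T9} and \ref{3-T13} step for step.
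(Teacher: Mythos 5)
Your proposal is correct and is exactly the derivation the paper intends: the paper states this theorem without proof, as following ``in a similar vein'' from the two preceding theorems, and your combination of their three identities with weights $h\mu$, $-k\mu$, $hk$, followed by the three sum relation for the double sums, is precisely that route. The bookkeeping does close as you claim: the $1/(\o_{\a}^{k\mu}-1)$, $1/(\xi_{\b}^{h\mu}+1)$, and $1/(\xi_{\b}^{\mu}\o_{\a}^{\mu}-1)$ contributions cancel exactly, and the surviving $1/(\xi_{\b}^{\mu}\o_{\a}^{\mu}+1)$ coefficient is $2k\mu-2\mu(3k-2)=-4\mu(k-1)$, with the constant $hk\mu$ untouched.
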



\begin{thebibliography}{00}
\bibitem{Berndt0}
B.~C.~Berndt, \emph{Generalized Eisenstein series and modified Dedekind sums}, J.~ Reine Angew.~ Math.~ \textbf{272} (1975), 182--193.

\bibitem{Berndt}
B.~C.~Berndt, \emph{Reciprocity theorems for Dedekind sums and generalizations}, Adv.~ Math.~ \textbf{23} (1977), 285--316.

\bibitem{BY}
B.~C.~Berndt and B.~P.~Yeap, \emph{Explicit evaluations and reciprocity theorems for finite trigonometric sums}, Adv.~Appl.~Math.~\textbf{29} (2002), 358--385.

\bibitem{BKZ1}
B.~C.~Berndt, S.~Kim, and A.~Zaharescu, \emph{Exact evaluations and reciprocity theorems for finite trigonometric sums}, Research in the Mathematical Sciences, ~\textbf{10}:40 (2023) (70 pages).

\bibitem{BKZ2}
B.~C.~Berndt, S.~Kim, and A.~Zaharescu, \emph{Finite trigonometric sums arising from Ramanujan's theta functions},
Ramanujan J., ~\textbf{63} (2024), 673--685.

\bibitem{BKZ3}
B.~C.~Berndt, S.~Kim, and A.~Zaharescu, paper in preparation.

\bibitem{bz}
B.~C.~Berndt and A.~Zaharescu, \emph{Finite trigonometric sums and class numbers},
Math.~Ann.~{\bf 330} (2004), 551--575.

\bibitem{bs}
A.~I.~Borevich and I.~R.~Shafarevich, \emph{Number Theory},
Pure Appl.~ Math., Vol. ~20,
Academic Press, New York--London, 1966.

\bibitem{ckk}
K.~Chakraborty, S.~Kanemitsu, and T.~Kuzumaki, \emph{On the class number formula of certain real quadratic fields}, Hardy--Ramanujan J.~\textbf{36} (2013), 1--7.

\bibitem{cz}
C.~Cobeli and A.~Zaharescu, \emph{The Haros--Farey sequence at two hundred years; A survey}, Acta Univ.~ Apulensis Math.~Inform.~(2003), no.~5, 1--38.

\bibitem{edwards}
H.~M.~Edwards, \emph{Riemann's Zeta Function}, Academic Press, New York, 1974.

\bibitem{franel}
J.~Franel, \emph{Les suites de Farey et les probl\`{e}mes des nombres premiers}, Nachr.~Akad.~Wiss.~G\"{o}ttingen Math.-Phys.~Kl.~1924, 198--201.

\bibitem{HRJ}
K.~N.~Harshitha, K.~R.~Vasuki, and M.~V.~Yathirajsharma, \emph{Trigonometric sums through Ramanujan’s theory of theta functions},
Ramanujan J. \textbf{57} (2022), 931--948. 

\bibitem{landau}
E.~Landau, \emph{Bemerkungen zu vorstehenden Abhandlung von Herrn Franel}, Nachr.~Akad.~Wiss.~G\"{o}ttingen Math.-Phys.~Kl.~1924, 202--206; \emph{Collected Papers}, Thales Verlag, pp.~166--171.

\bibitem{murty}
M.~Ram~Murty, \emph{Ramanujan series for arithmetic functions}, Hardy--Ramanujan J.~ {\bf 36} (2013), 21--33.

\bibitem{nz}
I.~Nivan and H.~S.~Zuckerman, \emph{An Introduction to the Theory of Numbers}, 4th ed, John Wiley \& Sons, New York, 1980.


\bibitem{rg}
H.~Rademacher and E.~Grosswald, \emph{Dedekind Sums}, Carus Math.~Monograph \#16, Math.~Assoc.~Amer., 1972.


\bibitem{Ram1918} S.~Ramanujan, \emph{On certain trigonometrical sums and their applications in the theory of numbers},  Trans. ~Cambridge
Philos.~ Soc. {\bf 22} (1918), 259--276.

\bibitem{cp}
S.~Ramanujan, \emph{Collected Papers of Srinivasa Ramanujan}, Cambridge Univ.~Press, 1927, 179--199; AMS Chelsea, Providence, 2000.

\bibitem{rassias}
M.~ T.~ Rassias and L\'aszl\'o T\'oth, \emph{Ramanujan Sums}, in \emph{Ramanujan: His Life, Legacy, and Mathematical Influence}, Springer, to appear.

\bibitem{palestine}
G.~Vinay, H.~T.~Shwetha, and K.~N.~Harshitha, \emph{Non-trivial trigonometric sums arising from some of Ramanujan theta function identities}, Palestine J.~Math.~Vol.~11(1) (2022), 130--134.

\bibitem{titchmarsh}
E.~C.~Titchmarsh, \emph{The Theory of the Riemann Zeta-Function}, Clarendon Press, Oxford, 1951.


\end{thebibliography}
\end{document}